\newtheorem{thm}{Theorem}[section]
\newtheorem{cor}[thm]{Corollary}
\newtheorem{lem}[thm]{Lemma}
\newtheorem{prop}[thm]{Proposition}
\newtheorem{rem}[thm]{Remark}
\numberwithin{equation}{section}
\begin{document}

\title{Digraphs from Endomorphisms of Finite Cyclic Groups}

%    Information for first author
\author{Min Sha}
%    Address of record for the research reported here
\address{Institut de Mathematiques de Bordeaux, Universite Bordeaux 1,
 351, cours de la Liberation, 33405 Talence Cedex, France}
%    Current address
%\curraddr{Department of Mathematics and Statistics,
%Case Western Reserve University, Cleveland, Ohio 43403}
\email{shamin2010@gmail.com}
%    \thanks will become a 1st page footnote.
%\thanks{The first author was supported in part by NSF Grant \#000000.}

%    Information for second author

%    General info
\subjclass[2010]{Primary 05C05, 05C20; Secondary 05C25, 05C50}

%\date{January 1, 2001 and, in revised form, June 22, 2001.}

%\dedicatory{This paper is dedicated to our advisors.}

\keywords{cyclic group, digraph, tree, adjacency matrix, automorphism group}

\begin{abstract}
We associate each endomorphism of a finite cyclic group with a digraph and study many properties of this digraph,
including its adjacency matrix and automorphism group.
\end{abstract}

\maketitle

%% The correct journal style for \specialsection is all uppercase; a known bug
%% in amsart.cls prevents this, so input must be uppercase until it is fixed.
%\specialsection*{This is a Special Section Head}

%%%%%%%%%%%%%%%%%%%%%%%%%%%%%%%%%%%%%%%%%%%%%%%%%%%%%%%%%%%%%%%%%%%%%%%%
%\footnote{Here is an example of a footnote. Notice that this footnote
%text is running on so that it can stand as an example of how a footnote
%with separate paragraphs should be written.
%\par
%And here is the beginning of the second paragraph.}%
%%%%%%%%%%%%%%%%%%%%%%%%%%%%%%%%%%%%%%%%%%%%%%%%%%%%%%%%%%%%%%%%%%%%%%%%

\section{Introduction}

As we all know, we can construct Cayley graphs and Cayley digraphs from a group, and these graphs are vertex-transitive.
In this article, we construct digraphs from a finite cyclic group by using its endomorphisms. In general,
these digraphs are not vertex-transitive. But they have many good properties which may make them into beautiful graphs and may
merit further researches.

Let $H$ be a finite cyclic group with $n$ elements, $n>1$, we treat it as a multiplicative group.
We denote its identity element by $1$ without confusion.
 As we all know, $H$ has $n$ endomorphisms, every endomorphism has a unique form $f:H \to H,x\to x^k,k\in \mathbb{Z}, 1\le k\le n$,
and $f$ is an isomorphism if and only if $n$ and $k$ are coprime.
We can consider the digraph that has the elements of $H$
as vertices and a directed edge from $a$ to $b$ if and only if $f(a)=b$.
Since cyclic groups with the same order are isomorphic, this digraph only depends on $n$ and $k$.
So we can denote this digraph by $G(n,k)$. For example, see Figure 1 in section 4.
\cite{B} studied the digraph from any endomorphism of $\mathbb{Z}/n\mathbb{Z}$,
especially the author studied the number of cycles.
\cite{BHM}, \cite{R} and \cite{VS} studied the digraph from the endomorphism $f(x)=x^2$ of $(\mathbb{Z}/p\mathbb{Z})^{*}$, $p$ is a prime.
In particular, the cycle and tree structures have been classified.
\cite{LMR} generalized those results in \cite{BHM} to the digraph from any endomorphism of $(\mathbb{Z}/p\mathbb{Z})^{*}$.
\cite{SH} studied some elementary properties of the digraph from any endomorphism of $(\mathbb{F}_{q})^{*}$,
$\mathbb{F}_{q}$ is a finite field with $q$ elements.

In section 2 and section 3, we generalize those results in \cite{LMR} to $G(n,k)$, and we consider many other properties of $G(n,k)$.
In section 4 and section 5, we consider its adjacency matrix and automorphism group respectively,
furthermore we determine its characteristic polynomial and minimal polynomial.

Especially, the results here may have applications to monomial dynamical systems over finite fields, see \cite{SH}.

\section{Basic Properties of $G(n,k)$}

Given two integers $l$ and $m$, we denote their greatest common divisor and least common multiple
by $(l,m)$ and $[l,m]$ respectively.

First we factor $n$ as $tw$ where $t$ is the largest factor of $n$ relatively prime to $k$.
So $(k,t)=1$ and $(w,t)=1$. For any $a\in H$, Let ord$(a)$ denote its order.

 For proceeding further, we need the following lemma.
\begin{lem}\label{solution}
Let $a\in H$, the equation $x^{k}=a$ has a solution if and only if $a^{\frac{n}{d}}=1$, $d=(n,k)$.
Moreover, if the equation has a solution, it has exactly $d$ solutions.
\end{lem}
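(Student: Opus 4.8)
The plan is to reduce the multiplicative equation to an additive one by fixing a generator. Since $H$ is cyclic of order $n$, I would choose a generator $g$ and write $a = g^m$ for some integer $m$, well-defined modulo $n$. Every element of $H$ has the form $x = g^y$, and the map $y \bmod n \mapsto g^y$ is a bijection between $\mathbb{Z}/n\mathbb{Z}$ and $H$. Under this identification the equation $x^k = a$ becomes $g^{ky} = g^m$, which holds precisely when $ky \equiv m \pmod{n}$. Thus counting solutions $x$ of $x^k = a$ in $H$ is the same as counting solutions $y$ of the linear congruence $ky \equiv m \pmod{n}$ in $\mathbb{Z}/n\mathbb{Z}$.

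Next I would invoke the elementary theory of linear congruences: the congruence $ky \equiv m \pmod{n}$ is solvable if and only if $d = (n,k)$ divides $m$, and in that case it has exactly $d$ incongruent solutions modulo $n$. Via the bijection above, this immediately transfers to the statement that $x^k = a$ has exactly $d$ solutions in $H$ whenever it is solvable, which settles the ``moreover'' clause as soon as the solvability criterion is matched to the stated condition.

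The remaining step, and the only one requiring a small computation, is to translate the divisibility condition $d \mid m$ into the group-theoretic condition $a^{n/d} = 1$. Here I would simply compute $a^{n/d} = g^{mn/d}$, which equals $1$ if and only if $n \mid mn/d$, that is, if and only if $m/d \in \mathbb{Z}$, i.e. if and only if $d \mid m$. Combining this equivalence with the solvability criterion of the previous paragraph yields that $x^k = a$ has a solution if and only if $a^{n/d} = 1$, which completes the proof.

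I expect no serious obstacle: the argument is essentially the standard solvability and counting result for linear congruences, repackaged in the cyclic group $H$. The two points to handle with care are keeping the identification between $H$ and $\mathbb{Z}/n\mathbb{Z}$ consistent, so that both existence and the exact count $d$ transfer correctly, and stating the order computation $a^{n/d} = 1 \iff d \mid m$ crisply so that the criterion appears in exactly the form asserted.
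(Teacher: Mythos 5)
Your proposal is correct, and it is exactly the argument the paper invokes: the paper's proof simply says to apply the same argument as Proposition 7.1.2 of Ireland--Rosen, which is precisely your reduction via a generator to the linear congruence $ky \equiv m \pmod{n}$, its standard solvability criterion $d \mid m$ with exactly $d$ solutions, and the translation $d \mid m \iff a^{n/d}=1$. No differences worth noting.
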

\begin{proof}
Applying the same argument as Proposition 7.1.2 in \cite{IR}.
\end{proof}

The following lemma is easy to prove but fundamental to the understanding of the structure of $G(n,k)$.
We omit its proof and refer the readers to \cite{LMR}.
\begin{lem}\label{basic}
We have the following elementary properties of $G(n,k)$.

{\rm(1)} The outdegree of any vertex in $G(n,k)$ is $1$.

{\rm(2)} The indegree of any vertex in $G(n,k)$ is $0$ or $(n,k)$. Moreover, the indegree of $a\in G$ is $(n,k)$ if and only if $a^{\frac{n}{(n,k)}}=1$.

{\rm(3)} $G(n,k)$ has $n$ vertices and $n$ directed edges.

{\rm(4)} Given $a,b\in H$, there exists a directed path from $a$ to $b$ if and only if there exists a positive integer $m$ such that $a^{k^{m}}=b$.

{\rm(5)} Given any element in $G(n,k)$, repeated iteration of $f$ will eventually lead to a cycle.

{\rm(6)} Every component of $G(n,k)$ contains exactly one cycle.

{\rm(7)} The set of non-cycle vertices forms a forest.
\end{lem}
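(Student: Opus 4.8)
The plan is to exploit the single defining feature of $G(n,k)$ — that it is the functional graph of the map $f(x)=x^{k}$ — together with Lemma \ref{solution}. Properties (1)--(3) concern degrees. For (1), since $f$ is a well-defined function on $H$, each vertex $a$ has exactly one out-neighbour $a^{k}$, so its outdegree is $1$. For (2), the indegree of $a$ counts the solutions of $x^{k}=a$, which by Lemma \ref{solution} is either $0$ or $d=(n,k)$, the latter occurring precisely when $a^{n/d}=1$; this is exactly the stated criterion. For (3), there are $n$ vertices because $\abs{H}=n$, and summing the outdegrees (all equal to $1$) over all vertices gives $n$ directed edges.

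Parts (4) and (5) I would read off directly from iterating $f$. A directed path of length $m\ge 1$ from $a$ to $b$ is a chain $a,f(a),\dots,f^{m}(a)=b$, and since $f^{m}(a)=a^{k^{m}}$, such a path exists for some $m$ if and only if $a^{k^{m}}=b$ for some positive integer $m$, which is (4). For (5), the forward orbit $a,f(a),f^{2}(a),\dots$ lies in the finite set $H$, so by the pigeonhole principle two iterates coincide, say $f^{i}(a)=f^{j}(a)$ with $i<j$; thereafter the images repeat periodically, so the orbit enters a cycle.

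For (6) I would first observe that each weakly connected component $W$ is closed under $f$ (the out-neighbour of a vertex is adjacent to it), so $W$ carries exactly $\abs{W}$ directed edges. Existence of a cycle in $W$ then follows from (5). For uniqueness I would argue that two distinct directed cycles must be vertex-disjoint: if they shared a vertex they would share its unique successor, and hence, by induction, coincide. But two vertex-disjoint directed cycles would contribute two independent cycles to the underlying undirected graph of $W$, forcing its cyclomatic number, namely $\abs{W}-\abs{W}+1=1$, to be at least $2$ — a contradiction. Hence $W$ contains exactly one cycle.

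Finally, for (7) I would pass to the subgraph induced on the non-cycle vertices. There each vertex still has outdegree at most $1$, and there can be no directed cycle among them, since such a cycle would itself be a cycle of $G(n,k)$, contradicting (5) and (6). I would then record the one genuinely structural fact: outdegree $\le 1$ together with the absence of a directed cycle forces the underlying undirected graph to be a forest, because in any undirected cycle the outdegree bound makes each vertex the tail of exactly one cycle-edge, which would orient the whole cycle consistently into a directed cycle — impossible. The main obstacle across the whole lemma is precisely this interplay in (6) and (7) between the local outdegree-$1$ condition and the global counting (cyclomatic) argument; once that bookkeeping is in place, every clause follows, and I would isolate the implication ``outdegree $\le 1$ and no directed cycle $\Rightarrow$ forest'' as the key step to state explicitly.
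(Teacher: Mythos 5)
Your proposal is correct, and it is worth noting that the paper itself offers no argument here at all: it declares the lemma ``easy to prove'' and defers to \cite{LMR}. So you are not diverging from the paper's proof so much as supplying the details it omits. Parts (1)--(5) are exactly the standard functional-graph observations anyone would make (outdegree $1$ because $f$ is a function, indegree via Lemma \ref{solution}, edge count by summing outdegrees, $f^{m}(a)=a^{k^{m}}$, pigeonhole for eventual periodicity). Where you add genuine content is in (6) and (7): the usual treatment in the literature argues that every vertex flows into a cycle and that two cycles in one weak component would be disjoint $f$-closed sets that no vertex can flow into simultaneously; you instead use the edge count from (3) restricted to a component ($\abs{W}$ vertices, $\abs{W}$ edges, hence cyclomatic number $1$), which gives uniqueness in (6) and the forest property in (7) in one clean bookkeeping stroke. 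Two small points you handle implicitly but correctly: the underlying graph must be read as a multigraph so that directed $2$-cycles (a pair of antiparallel edges) and loops (e.g.\ at the identity) each count as an undirected cycle in the cyclomatic count; and your closing implication --- outdegree $\le 1$ plus no directed cycle forces a forest, because in any undirected cycle the $r$ edges among $r$ vertices make each vertex the tail of exactly one cycle-edge, orienting the cycle coherently --- is exactly right and is indeed the key structural step, so isolating it explicitly is good judgment.
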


\begin{prop}\label{indegree}
The number of the vertices with indegree $0$ is $\frac{d-1}{d}n$, where $d=(n,k)$.
\end{prop}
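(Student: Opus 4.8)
The plan is to reduce everything to the indegree dichotomy already established in Lemma~\ref{basic}. By part (2) of that lemma, every vertex of $G(n,k)$ has indegree either $0$ or $d=(n,k)$, so the vertex set splits cleanly into those two classes. Hence it suffices to count the vertices of indegree $d$ and subtract from $n$; the quantity $\frac{d-1}{d}n$ that we are after is exactly $n-\frac{n}{d}$, so the target reduces to showing that there are precisely $\frac{n}{d}$ vertices of indegree $d$.

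I would establish this count by a double-counting of the edges, which avoids any explicit description of which vertices are receivers. By Lemma~\ref{basic}(3) the digraph has exactly $n$ directed edges, and by part (1) each edge issues from a unique tail, so the $n$ edges are accounted for by their heads. Every head has indegree $d$ (a vertex of indegree $0$ receives no edge), and each vertex of indegree $d$ absorbs exactly $d$ of these edges. Writing $N$ for the number of vertices with positive indegree, we thus get the identity
\[
n = dN,
\]
so $N=\frac{n}{d}$, which is an integer because $d\mid n$. The vertices of indegree $0$ then number $n-N=n-\frac{n}{d}=\frac{d-1}{d}n$, as claimed.

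As an alternative route I could argue directly via the characterization in Lemma~\ref{basic}(2): the vertices of indegree $d$ are precisely those $a\in H$ with $a^{n/d}=1$, and in a cyclic group of order $n$ the number of solutions of $x^{n/d}=1$ equals $\gcd(n/d,n)=\frac{n}{d}$ since $\frac{n}{d}$ divides $n$. This yields the same count. I do not anticipate a genuine obstacle here; the only points to verify are that $d\mid n$ (immediate from $d=(n,k)$) and the elementary fact that $x^m=1$ has $\gcd(m,n)$ solutions in a cyclic group of order $n$. The double-counting argument is the cleaner of the two because it uses only the outdegree/indegree bookkeeping and never needs the solution-counting lemma.
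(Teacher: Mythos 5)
Your proposal is correct, and your primary argument takes a genuinely different route from the paper's. The paper invokes Lemma~\ref{solution} to identify the vertices of non-zero indegree as precisely the set $\{x\in H \mid x^{n/d}=1\}$, recognizes this as a cyclic subgroup of $H$ of order $\frac{n}{d}$, and subtracts from $n$. Your double-counting argument instead uses only the degree bookkeeping of Lemma~\ref{basic}: there are $n$ edges by parts (1) and (3), the indegree of every vertex is $0$ or $d$ by part (2), and since the sum of the indegrees equals the number of edges, $n=dN$ where $N$ is the number of receivers, giving $N=\frac{n}{d}$ with the divisibility $d\mid n$ falling out as a byproduct rather than being quoted. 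This is more elementary --- it bypasses the solution-counting lemma entirely and would work verbatim in any functional digraph on $n$ vertices in which every positive indegree equals a fixed constant $d$. What the paper's route buys in exchange is structural information that the bare count does not provide: the receivers form a subgroup of $H$, a fact the paper reuses later (see the remark before Corollary~\ref{subgroup}, where the vertices of non-zero indegree are identified with the subgroup $H_{\frac{n}{(k,n)}}$). Your alternative argument via the characterization in Lemma~\ref{basic}(2), together with the fact that $x^{m}=1$ has $(m,n)$ solutions in a cyclic group of order $n$, is essentially the paper's own proof; both of your routes are sound.
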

\begin{proof}
By Lemma \ref{solution}, a vertex $a$ has non-zero indegree if and only if $a^{\frac{n}{d}}=1$.
Hence, the vertices with non-zero indegree form a subset $H_{d}=\{x\in H|x^{\frac{n}{d}}=1\}$.
It is well-known that $H_{d}$ is a cyclic subgroup of $H$ with $\frac{n}{d}$ elements. So we get the desired result.
\end{proof}

As follows, we want to study the cycle structures of $G(n,k)$.

\begin{prop}\label{vertex}
The vertex $a$ is a cycle vertex if and only if ${\rm ord}(a)|t$.
\end{prop}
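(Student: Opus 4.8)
The plan is to reduce the geometric condition ``$a$ lies on a cycle'' to a purely arithmetic condition on $\mathrm{ord}(a)$, and then match that condition against the definition of $t$. First I would use the functional-graph structure recorded in Lemma~\ref{basic}: since every vertex has outdegree $1$ (part (1)) and iteration of $f$ is eventually periodic (part (5)), a vertex $a$ lies on a cycle if and only if its forward orbit returns to it, i.e.\ there is an integer $m\ge 1$ with $f^{m}(a)=a$. Unwinding the definition $f(x)=x^{k}$, this says $a$ is a cycle vertex if and only if $a^{k^{m}}=a$ for some $m\ge 1$, equivalently $a^{k^{m}-1}=1$.

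Next I would translate this into a congruence. Setting $e=\mathrm{ord}(a)$, the relation $a^{k^{m}-1}=1$ holds if and only if $e\mid k^{m}-1$, that is $k^{m}\equiv 1\pmod{e}$. Such an $m\ge 1$ exists if and only if $k$ is invertible modulo $e$: if $(k,e)=1$ then $k$ has finite multiplicative order in $(\mathbb{Z}/e\mathbb{Z})^{*}$ and that order works, while conversely $k^{m}\equiv 1\pmod e$ forces $k\cdot k^{m-1}\equiv 1\pmod e$, so $(k,e)=1$. Thus $a$ is a cycle vertex if and only if $(k,\mathrm{ord}(a))=1$.

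The final step is to identify this coprimality condition with divisibility by $t$. Here I would use that $\mathrm{ord}(a)$ always divides $n$, together with the defining property of $t$ as the largest divisor of $n$ coprime to $k$. Factoring $n=\prod_i p_i^{\alpha_i}$, one has $t=\prod_{p_i\nmid k}p_i^{\alpha_i}$. For any divisor $e\mid n$ I would prove $(k,e)=1\iff e\mid t$: if $e\mid t$ then $(k,e)$ divides both $k$ and $t$, so $(k,e)\mid(k,t)=1$; conversely if $(k,e)=1$ then no prime dividing $e$ divides $k$, so every prime power occurring in $e$ is a prime power occurring in $t$, whence $e\mid t$. Applying this with $e=\mathrm{ord}(a)$ completes the argument.

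I expect the only delicate point to be this last step, where the equivalence $(k,e)=1\iff e\mid t$ must be argued from the prime factorizations rather than merely from $(k,t)=1$; the forward implication is immediate, but the converse genuinely uses that $t$ collects the \emph{full} prime-power part of $n$ supported away from $k$, together with $e\mid n$. Everything else is a direct unwinding of the functional-graph picture from Lemma~\ref{basic} and an elementary fact about the multiplicative group $(\mathbb{Z}/e\mathbb{Z})^{*}$.
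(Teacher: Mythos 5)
Your proposal is correct and follows essentially the same route as the paper: both reduce cycle membership to the existence of $m\ge 1$ with $a^{k^m}=a$, translate this into $(\mathrm{ord}(a),k)=1$ via the multiplicative order of $k$ modulo $\mathrm{ord}(a)$, and then use the defining property of $t$ (the paper deduces $(\mathrm{ord}(a),w)=1$ and uses $\mathrm{ord}(a)\mid n=tw$, while you argue via prime factorizations, but these are the same observation). Your explicit verification of the converse implication $(k,e)=1\Rightarrow e\mid t$ for $e\mid n$ is a point the paper leaves implicit, and it is handled correctly.
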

\begin{proof}
Suppose $a$ is a cycle vertex. Then there exists a positive integer $m$ such that $a^{k^{m}}=a$.
So ${\rm ord}(a)|(k^{m}-1)$, which implies $({\rm ord}(a),k)=1$. So $({\rm ord}(a),w)=1$.
Note that ${\rm ord}(a)|n$, then ${\rm ord}(a)|t$.

Conversely, suppose ${\rm ord}(a)|t$. Then $({\rm ord}(a),k)=1$. So there exists a positive integer $m$
such that ${\rm ord}(a)|(k^{m}-1)$, which implies $a^{k^{m}}=a$. So $a$ is a cycle vertex.
\end{proof}

\begin{cor}
There are exactly $t$ cycle vertices in $G(n,k)$.
\end{cor}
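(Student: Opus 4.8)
The plan is to apply Proposition~\ref{vertex} directly and then reduce the corollary to a counting problem about cyclic groups. By Proposition~\ref{vertex}, the cycle vertices are precisely those $a\in H$ with ${\rm ord}(a)\mid t$, so it suffices to count the number of such elements. Since $t\mid n$ by construction (recall $n=tw$), this count is clean.

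First I would rewrite the condition ${\rm ord}(a)\mid t$ in the equivalent form $a^{t}=1$, so that the cycle vertices are exactly the elements of the set $S=\{a\in H: a^{t}=1\}$. This set is the kernel of the endomorphism $x\mapsto x^{t}$, hence a subgroup of $H$. To determine $|S|$, I would invoke the structure of the cyclic group $H$: because $H$ is cyclic of order $n$ and $t\mid n$, it has a unique subgroup of each divisor order, and the elements annihilated by $x\mapsto x^{t}$ form precisely the subgroup of order $t$. Equivalently, the equation $x^{t}=1$ has exactly $(n,t)=t$ solutions in $H$. Thus $|S|=t$, and combining this with Proposition~\ref{vertex} yields exactly $t$ cycle vertices.

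This is essentially the same type of argument already used in the proof of Proposition~\ref{indegree}, where the subgroup $H_{d}=\{x\in H: x^{n/d}=1\}$ was counted and found to have $n/d$ elements. I do not expect any genuine obstacle here; the only point requiring a moment's care is the observation that $t$ divides $n$, which is exactly what guarantees the solution count equals $t$ rather than the smaller $(n,t)$. This divisibility is immediate from the factorization $n=tw$.
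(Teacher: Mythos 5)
Your proof is correct, and it finishes differently from the paper's. Both arguments begin with Proposition~\ref{vertex}, but the paper counts the cycle vertices order by order: the number of elements of order $d$ in the cyclic group $H$ is $\varphi(d)$ for each $d\mid t$, and the divisor-sum identity $\sum_{d\mid t}\varphi(d)=t$ gives the total. You instead recast ${\rm ord}(a)\mid t$ as $a^{t}=1$ and count the kernel of the endomorphism $x\mapsto x^{t}$, which in a cyclic group of order $n$ has exactly $(n,t)=t$ elements because $t\mid n$; your flagged point of care, that $t\mid n$ makes the solution count equal $t$ rather than a smaller $(n,t)$, is exactly right and immediate from $n=tw$. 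The two counting facts are equally elementary, but each buys something different: the paper's per-order count is reused immediately afterwards, since knowing there are $\varphi(d)$ cycle vertices of order $d$ is precisely what yields the count $\varphi(d)/\ell(d)$ of cycles of order $d$ in Proposition~\ref{cycle}~(3), whereas your version makes explicit the structural fact that the cycle vertices form the subgroup $H_{t}=\{x\in H\mid x^{t}=1\}$ --- a fact the paper itself states and exploits later (it underlies Lemma~\ref{product} and the discussion preceding Corollary~\ref{subgroup}), so your route is a legitimate and arguably more conceptual alternative that simply trades away the order-by-order refinement.
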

\begin{proof}
From Proposition \ref{vertex}, the total number of cycle vertices is $\sum\limits_{d|t}\varphi(d)=t$,
where $\varphi$ is the Euler's $\varphi$-function, and $\varphi(d)$ is the number of elements with order $d$.
\end{proof}

\begin{prop}\label{same order}
Vertices in the same cycle have the same order.
\end{prop}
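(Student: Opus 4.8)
The plan is to reduce the whole statement to the behavior of the order under a single application of the endomorphism $f$, together with the characterization of cycle vertices already obtained in Proposition \ref{vertex}. A cycle is, by definition, a set of vertices of the form $a, f(a), f^{2}(a), \dots, f^{L-1}(a)$ with $f^{L}(a)=a$, where consecutive vertices are joined by the edge map $f$. So it suffices to show that $f$ preserves the order of every cycle vertex; the equality of orders around the whole cycle then follows by iterating this one-step fact.

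First I would recall the standard group-theoretic formula: if $a$ has order $m={\rm ord}(a)$, then ${\rm ord}(a^{k})=m/(m,k)$. This is elementary in a cyclic group and needs no new argument. The point is to combine it with the coprimality built into being a cycle vertex.

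Next I would invoke Proposition \ref{vertex}: since $a$ lies on a cycle, ${\rm ord}(a)\mid t$, where $t$ is the largest factor of $n$ relatively prime to $k$. In particular $({\rm ord}(a),k)=1$, so the denominator in the order formula collapses, giving
\[
{\rm ord}(f(a))={\rm ord}(a^{k})=\frac{{\rm ord}(a)}{({\rm ord}(a),k)}={\rm ord}(a).
\]
Thus applying $f$ once to a cycle vertex leaves its order unchanged. Walking around the cycle, each vertex is the image under $f$ of the previous one and is itself a cycle vertex, so repeated application of this identity shows that all of $a, f(a), \dots, f^{L-1}(a)$ share the common order ${\rm ord}(a)$, which is exactly the claim.

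There is essentially no hard part here: the only thing one must be careful about is that the order-preserving step genuinely requires $a$ to be a cycle vertex, since $f$ generally decreases orders (by the factor $({\rm ord}(a),k)$) on non-cycle vertices. The coprimality $({\rm ord}(a),k)=1$ supplied by Proposition \ref{vertex} is precisely what rules this out, so I would make sure that dependence is stated explicitly rather than treating the order formula as if it always preserved orders.
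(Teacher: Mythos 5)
Your proof is correct, but it takes a genuinely different route from the paper's. The paper argues by symmetric divisibility: if $a$ and $b$ lie on the same cycle, then $b=a^{k^{m}}$ for some $m\ge 1$, so $b^{\mathrm{ord}(a)}=(a^{\mathrm{ord}(a)})^{k^{m}}=1$ and $\mathrm{ord}(b)\mid \mathrm{ord}(a)$; since on a cycle one can also travel from $b$ back to $a$, the same computation gives $\mathrm{ord}(a)\mid \mathrm{ord}(b)$, hence equality. That argument needs nothing beyond the fact that any two vertices of a cycle are powers of each other---in particular it does not invoke Proposition \ref{vertex}, the order formula, or the factorization $n=tw$. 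You instead localize the statement to a one-step invariance: combining $\mathrm{ord}(a^{k})=\mathrm{ord}(a)/(\mathrm{ord}(a),k)$ with the coprimality $(\mathrm{ord}(a),k)=1$ supplied by Proposition \ref{vertex}, you show $f$ preserves the order of every cycle vertex and then iterate around the cycle. There is no circularity, since Proposition \ref{vertex} precedes this proposition and is proved independently of it. The paper's route is more self-contained and marginally more general (it works whenever two elements are mutual powers); yours buys structural insight: it isolates exactly why orders are stable on cycles and drop off them, namely by the factor $(\mathrm{ord}(a),k)$, which is the same mechanism the paper exploits later in Proposition \ref{height1} via $\mathrm{ord}(a^{k^{h}})=n_{a}/(n_{a},k^{h})$. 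Your closing caution---that the order-preservation step genuinely requires the cycle hypothesis---is well placed and correctly identifies the only point where the argument could go wrong.
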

\begin{proof}
Assume $a$ and $b$ are in the same cycle. So there exists a $m$ such that $a^{k^{m}}=b$, which implies $b^{{\rm ord}(a)}=1$.
So ${\rm ord}(b)|{\rm ord}(a)$. Similarly, we have ${\rm ord}(a)|{\rm ord}(b)$. So ${\rm ord}(a)={\rm ord}(b)$.
\end{proof}

By Proposition \ref{same order}, the notion of the order of a cycle is well-defined.
Let $\ell(d)$ denote the length of a cycle with order $d$, where $d|t$. If two integers $l$ and $m$ are coprime,
let ${\rm ord}_{l}m$ denote the exponent of $m$ modulo $l$.
\begin{prop}\label{cycle}
Let $d$ and $r$ be orders of cycles. Then:

$(1)$ $\ell(d)={\rm ord}_{d}k$.

$(2)$ The longest cycle length in $G(n,k)$ is $\ell(t)={\rm ord}_{t}k$.

$(3)$ There are $\varphi(d)/\ell(d)$ cycles of order $d$.

$(4)$ The total number of cycles in $G(n,k)$ is $\sum\limits_{d|t}\frac{\varphi(d)}{\ell(d)}$.

$(5)$ $\ell([d,r])=[\ell(d),\ell(r)]$.
\end{prop}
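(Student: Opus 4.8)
The plan is to build everything on part $(1)$, which converts a graph-theoretic quantity into a purely number-theoretic one. Fix a cycle vertex $a$ with ${\rm ord}(a)=d$; by Proposition \ref{vertex} we have $d\mid t$, and by Proposition \ref{same order} every vertex on its cycle also has order $d$. The length of the cycle is by definition the least positive integer $m$ with $a^{k^{m}}=a$, equivalently $a^{k^{m}-1}=1$, equivalently $d\mid k^{m}-1$, i.e. $k^{m}\equiv 1\pmod{d}$. Since $d\mid t$ together with $(k,t)=1$ forces $(k,d)=1$, this least $m$ is exactly ${\rm ord}_{d}k$, and it depends only on $d$, not on the chosen $a$. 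This proves $\ell(d)={\rm ord}_{d}k$.

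For parts $(2)$--$(4)$ I would argue as follows. For $(2)$, note that if $d\mid t$ then $k^{{\rm ord}_{t}k}\equiv 1\pmod{t}$ gives $k^{{\rm ord}_{t}k}\equiv 1\pmod{d}$, so ${\rm ord}_{d}k\mid {\rm ord}_{t}k$; hence $\ell(d)\le\ell(t)$ for every admissible $d$, and $\ell(t)={\rm ord}_{t}k$ is attained since $t\mid t$. For $(3)$, recall that in the cyclic group $H$ there are exactly $\varphi(d)$ elements of order $d$; by Propositions \ref{vertex} and \ref{same order} each such element is a cycle vertex lying on a cycle all of whose vertices also have order $d$, and every such cycle has length $\ell(d)$. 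Thus the $\varphi(d)$ elements of order $d$ are partitioned into cycles of equal length $\ell(d)$, giving $\varphi(d)/\ell(d)$ cycles of order $d$. Part $(4)$ is then immediate: every cycle has some order $d\mid t$, so summing the counts from $(3)$ over all $d\mid t$ yields $\sum_{d\mid t}\varphi(d)/\ell(d)$.

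Part $(5)$ reduces, via $(1)$, to the identity ${\rm ord}_{[d,r]}k=[{\rm ord}_{d}k,{\rm ord}_{r}k]$. First note $[d,r]\mid t$ (since $d,r\mid t$), so $([d,r],k)=1$ and the left side is defined. To prove the identity I would show mutual divisibility. Writing $M={\rm ord}_{[d,r]}k$, from $d,r\mid[d,r]$ I get $k^{M}\equiv 1$ modulo both $d$ and $r$, so ${\rm ord}_{d}k\mid M$ and ${\rm ord}_{r}k\mid M$, whence $[{\rm ord}_{d}k,{\rm ord}_{r}k]\mid M$. Conversely, writing $N=[{\rm ord}_{d}k,{\rm ord}_{r}k]$, both $d\mid k^{N}-1$ and $r\mid k^{N}-1$ hold, so $[d,r]\mid k^{N}-1$ and therefore $M\mid N$. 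Hence $M=N$.

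The main obstacle is really part $(1)$: once the cycle length is identified with ${\rm ord}_{d}k$ and shown to be independent of the representative vertex, parts $(2)$--$(5)$ are formal consequences --- divisibility bookkeeping for $(2)$, an orbit-counting partition for $(3)$ and $(4)$, and an elementary gcd/lcm-of-orders lemma for $(5)$. The one subtlety worth checking carefully is that every element of order $d\mid t$ genuinely occurs as a cycle vertex on a cycle of order $d$, which is exactly what Propositions \ref{vertex} and \ref{same order} guarantee and what makes the clean partition in $(3)$ valid.
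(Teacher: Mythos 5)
Your proof is correct and takes essentially the same route as the paper's: part $(1)$ by identifying the cycle length as the least $m$ with $d\mid k^{m}-1$, parts $(2)$--$(4)$ by the same divisibility and partition-counting observations, and part $(5)$ by the identical mutual-divisibility argument. You merely make explicit a few details the paper leaves implicit, such as $(k,d)=1$ for $d\mid t$ and the well-definedness of $\ell([d,r])$.
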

\begin{proof}
(1) Let $a$ be a vertex in a cycle of order $d$. It is obvious that $\ell(d)$ is the smallest positive integer
such that $a^{k^{\ell(d)}}=a$, that is the smallest positive integer such that $d|(k^{\ell(d)}-1)$. So $\ell(d)={\rm ord}_{d}k$.

(2) By (1) and Proposition \ref{vertex}.

(3) Notice that the number of elements with order $d$ is $\varphi(d)$.

(4) By (3) and Proposition \ref{vertex}.

(5) Since $d|[d,r]$, $\ell(d)|\ell([d,r])$. Similarly, we have $\ell(r)|\ell([d,r])$.
So $[\ell(d),\ell(r)]|\ell([d,r])$. In addition, since $d|(k^{\ell(d)}-1)$, $d|(k^{[\ell(d),\ell(r)]}-1)$.
Similarly, $r|(k^{[\ell(d),\ell(r)]}-1)$. So $[d,r]|(k^{[\ell(d),\ell(r)]}-1)$. Hence, $\ell([d,r])|[\ell(d),\ell(r)]$.
So we have $\ell([d,r])=[\ell(d),\ell(r)]$.
\end{proof}

But $\ell((d,r))=(\ell(d),\ell(r))$ is not always true. For example, let $k=2, d=11$ and $r=15$,
we have $(11,15)=1$ and $\ell(1)=1$, but $(\ell(11),\ell(15))=(10,4)=2$.

\begin{rem}
{\rm Let $\mu$ be M$\ddot{\rm o}$bius function. Similar as Proposition 2.5 in \cite{SH}, the number of cycles with length $r$ is
$\frac{1}{r}\sum\limits_{d|r}\mu(d)(k^{r/d}-1,n)$.}
\end{rem}

\begin{cor}
If a component has a generator of $H$, then its unique cycle has the longest length $\ell(t)$.
\end{cor}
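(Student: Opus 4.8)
The plan is to show that the unique cycle of the component containing a generator $g$ of $H$ has order $t$; its length will then be $\ell(t)={\rm ord}_t k$, which Proposition \ref{cycle}(2) identifies as the longest cycle length in $G(n,k)$. When $w=1$ the claim is immediate, since then $g$ itself has order $n=t$ and is already a cycle vertex by Proposition \ref{vertex}; so the content lies in the case $w>1$, where $g$ sits on a tail leading into the cycle.

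First I would pin down the cycle vertices reachable from $g$. By Lemma \ref{basic}(5), repeated iteration of $f$ starting at $g$ eventually enters the unique cycle of the component, so by Lemma \ref{basic}(4) there is an integer $m_0$ with $g^{k^{m_0}}$ a cycle vertex. Since $f$ sends cycle vertices to cycle vertices, $g^{k^{m}}=f^{\,m-m_0}(g^{k^{m_0}})$ is a cycle vertex for every $m\ge m_0$. By Proposition \ref{same order} all of these share the common order $d$ of the cycle, so it suffices to compute ${\rm ord}(g^{k^m})$ for large $m$ and show that it equals $t$.

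The key computation uses that $g$ generates $H$, so ${\rm ord}(g^{k^m})=n/(n,k^m)$. Writing $n=tw$ with $(t,k)=1$, I have $(n,k^m)=(tw,k^m)=(w,k^m)$, because $t$ is coprime to $k$ and hence to $k^m$. The essential point—and the only place the defining property of the factorization $n=tw$ is really used—is that every prime dividing $w$ also divides $k$; otherwise $t$ would not be the largest factor of $n$ coprime to $k$. Consequently $w\mid k^m$ once $m$ is large enough, giving $(w,k^m)=w$ and ${\rm ord}(g^{k^m})=n/w=t$.

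Combining these steps, the common cycle order is $d=t$, so by Proposition \ref{cycle}(1) the cycle length is $\ell(t)={\rm ord}_t k$, the longest cycle length by Proposition \ref{cycle}(2). I expect the only mildly delicate step to be the verification that $(n,k^m)=w$ for all sufficiently large $m$: this is a routine divisibility argument, but it is exactly where the choice of $t$ and $w$ must be invoked, via the observation that all prime factors of $w$ divide $k$.
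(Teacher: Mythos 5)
Your proposal is correct and takes essentially the same route as the paper: the paper's one-line proof simply asserts that the component containing a generator has cycle order $t$ (whence length $\ell(t)={\rm ord}_{t}k$, the longest by Proposition \ref{cycle}(2)), and your computation ${\rm ord}(g^{k^{m}})=n/(n,k^{m})=n/(w,k^{m})=t$ for large $m$ is precisely the verification the paper leaves implicit. The divisibility details you supply --- $(t,k)=1$, and every prime dividing $w$ divides $k$ by the maximality of $t$, so $(n,k^{m})=w$ eventually --- are exactly right.
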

\begin{proof}
Since if a component has a generator of $H$, the order of its unique cycle is $t$.
\end{proof}

\begin{prop}
 Every generator of $H$ has indegree $0$ if and only if $(n,k)\ne 1$.
\end{prop}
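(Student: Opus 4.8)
The plan is to reduce both directions to the explicit description of indegrees furnished by Lemma \ref{basic}(2), combined with the single fact that a generator of $H$ has order exactly $n$. Throughout write $d=(n,k)$. By Lemma \ref{basic}(2), a vertex $a$ has nonzero indegree (necessarily equal to $d$) precisely when $a^{\frac{n}{d}}=1$, and indegree $0$ otherwise; equivalently, $a$ has nonzero indegree if and only if ${\rm ord}(a)\mid \frac{n}{d}$. So the whole proposition is really a statement about when a generator can satisfy this order-divisibility condition.

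For the direction ($\Leftarrow$) I would assume $(n,k)=d>1$ and let $a$ be an arbitrary generator, so ${\rm ord}(a)=n$. Since $d>1$ we have $\frac{n}{d}<n$, hence ${\rm ord}(a)=n\nmid \frac{n}{d}$, so $a^{\frac{n}{d}}\neq 1$. By Lemma \ref{basic}(2) the indegree of $a$ is $0$, and as $a$ was arbitrary, every generator has indegree $0$. For the converse I would argue by contraposition: assume $(n,k)=d=1$, so $\frac{n}{d}=n$ and every $a\in H$ satisfies $a^{\frac{n}{d}}=a^{n}=1$ by Lagrange's theorem. In particular any generator has indegree $d=1\neq 0$, and since $n>1$ guarantees $\varphi(n)\geq 1$ generators exist, it is not the case that every generator has indegree $0$.

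The argument is essentially bookkeeping once Lemma \ref{basic}(2) is available; the only point requiring a moment's care is the implication ``${\rm ord}(a)\mid \frac{n}{d}$ with ${\rm ord}(a)=n$ forces $d=1$'', which rests entirely on the strict inequality $\frac{n}{d}<n$ for $d>1$. There is no genuine obstacle here, so rather than a hard step the real choice is one of presentation. A cleaner variant I might adopt instead rephrases everything group-theoretically: the image of $f$ is exactly the cyclic subgroup $H_{d}=\{x\in H \mid x^{\frac{n}{d}}=1\}$ of order $\frac{n}{d}$ appearing in Proposition \ref{indegree}, and a vertex has nonzero indegree if and only if it lies in this image. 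A generator, having order $n$, belongs to $H_{d}$ if and only if $n\mid \frac{n}{d}$, i.e. if and only if $d=1$; this delivers both directions of the biconditional at once and makes transparent why the dichotomy is uniform over all generators.
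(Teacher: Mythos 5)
Your proof is correct, but it takes a genuinely different route from the paper's. You run both directions through the solvability criterion of Lemma \ref{solution} (equivalently Lemma \ref{basic}(2)): a vertex has nonzero indegree if and only if its order divides $\frac{n}{d}$, so a generator, having order exactly $n$, has nonzero indegree if and only if $d=1$; this settles the biconditional in one stroke, and your contrapositive for the converse (when $d=1$ every element satisfies $a^{n}=1$, so every vertex, generators included, has indegree $1$) needs nothing beyond Lagrange. The paper argues differently in both directions: for the forward implication it supposes a generator $b$ has a preimage $a$ with $a^{k}=b$ and computes $n={\rm ord}(b)=\frac{{\rm ord}(a)}{({\rm ord}(a),k)}$ with ${\rm ord}(a)\mid n$, forcing ${\rm ord}(a)=n$ and $(n,k)=1$, a contradiction; for the converse it observes that a generator of indegree $0$ cannot be a cycle vertex (every cycle vertex receives an edge from its predecessor), so by Proposition \ref{vertex} the order $n$ of a generator does not divide $t$, whence $t\ne n$ and $(n,k)\ne 1$. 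Your version is more uniform and arguably cleaner: it is really the observation, already implicit in Proposition \ref{indegree}, that the vertices of nonzero indegree form the subgroup of $k$-th powers, of order $\frac{n}{d}$, and this subgroup contains a generator precisely when $d=1$. What the paper's choices buy is a tie-in with the surrounding development: its forward direction uses only the elementary order-of-a-power formula, independent of Lemma \ref{solution}, and its converse links the statement to the cycle structure via the dichotomy $t=n \Leftrightarrow (n,k)=1$, which is the organizing theme of Section 2.
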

\begin{proof}
Suppose $(n,k)\ne 1$. For any generator $b$ of $H$, if the indegree of $b$ is not 0, then there exists a vertex $a$ such that $a^{k}=b$.
Since $n={\rm ord}(b)=\frac{{\rm ord}(a)}{({\rm ord}(a),k)}$ and ${\rm ord}(a)|n$,
${\rm ord}(a)=n$ and $(n,k)=1$. This leads to a contradiction.

Conversely, if every generator of $H$ with indegree $0$, then generators are not cycle vertices.
By Proposition \ref{vertex}, $t\ne n$. So $(n,k)\ne 1$.
\end{proof}

Hence, if $(n,k)\ne 1$, since $H$ has $\varphi(n)$ generators, by Proposition \ref{indegree},
we have $\varphi(n)\le \frac{d-1}{d}n$, where $d=(n,k)$.

Now we would like to consider which kind of graphs $G(n,k)$ belongs to.
\begin{prop}\label{regular}
The following statements are equivalent.

$(1)$ $G(n,k)$ is regular of degree $1$.

$(2)$ Every component of $G(n,k)$ is a cycle.

$(3)$ $f$ is an automorphism.
\end{prop}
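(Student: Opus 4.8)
The plan is to prove the cyclic chain of implications $(3)\Rightarrow(1)\Rightarrow(2)\Rightarrow(3)$, drawing on the degree information in Lemma \ref{basic} and the count of cycle vertices in the corollary to Proposition \ref{vertex}. Throughout I read ``regular of degree $1$'' in the sense that every vertex has both indegree and outdegree equal to $1$; this reading is essential, since Lemma \ref{basic}(1) already fixes the outdegree at $1$, so the only content of $(1)$ is that every indegree equals $1$.

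For $(3)\Rightarrow(1)$ I would use that an endomorphism of a finite group is an automorphism exactly when it is bijective. If $f$ is an automorphism then each $b\in H$ has a unique preimage under $f$, so every vertex of $G(n,k)$ has indegree exactly $1$; combined with the outdegree-$1$ statement of Lemma \ref{basic}(1), this gives regularity of degree $1$. For $(1)\Rightarrow(2)$ I would invoke the standard structural fact that a finite digraph in which every vertex has indegree and outdegree $1$ is a disjoint union of directed cycles: Lemma \ref{basic}(5) guarantees that iterating $f$ from any vertex reaches a cycle, the outdegree-$1$ condition makes the forward orbit unique, and the indegree-$1$ condition forbids any off-cycle vertex from feeding into the cycle, so no tree can hang off it. Hence each component coincides with its unique cycle (Lemma \ref{basic}(6)), which is $(2)$.

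For $(2)\Rightarrow(3)$, if every component is a cycle then all $n$ vertices are cycle vertices. By the corollary to Proposition \ref{vertex} there are exactly $t$ cycle vertices, so $t=n$. Since $t$ is by definition the largest factor of $n$ coprime to $k$ (equivalently $w=1$), the equality $t=n$ forces $(n,k)=1$, and as recalled in the introduction $f\colon x\mapsto x^{k}$ is an automorphism precisely when $(n,k)=1$; this closes the cycle.

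The only genuinely non-formal step is the decomposition in $(1)\Rightarrow(2)$: one must rule out tree branches attached to a cycle, which is exactly the place where the indegree-$1$ hypothesis (rather than merely outdegree $1$) is used, everything else being a direct application of the established degree and cycle-vertex counts. As a cross-check, I note one could instead show all three conditions equivalent to the single arithmetic condition $(n,k)=1$: by Proposition \ref{indegree} statement $(1)$ holds iff the number $\frac{d-1}{d}n$ of indegree-$0$ vertices vanishes, i.e. iff $d=(n,k)=1$, which yields a more uniform but essentially equivalent argument.
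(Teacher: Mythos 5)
Your proof is correct, but it is organized differently from the paper's. The paper disposes of the proposition in one line: it observes that $f$ is an automorphism if and only if $(n,k)=1$, and then cites Lemma \ref{basic}(2) and (7) --- that is, it pivots all three statements through the single arithmetic condition $(n,k)=1$, using the indegree dichotomy (every indegree is $0$ or $(n,k)$, so regularity of degree $1$ holds exactly when $(n,k)=1$) together with the forest statement (a nonempty forest of non-cycle vertices must contain a vertex of indegree $0$, so regularity forces every component to be a cycle, and conversely). You instead prove the chain $(3)\Rightarrow(1)\Rightarrow(2)\Rightarrow(3)$, and two of your steps use different ingredients: for $(1)\Rightarrow(2)$ you replace the appeal to Lemma \ref{basic}(7) by a direct first-entry argument (the first cycle vertex reached from a hypothetical non-cycle vertex would have two distinct in-neighbors, its cycle predecessor and an off-cycle vertex, contradicting indegree $1$), which is sound; and for $(2)\Rightarrow(3)$ you route through the corollary to Proposition \ref{vertex}, getting $t=n$ from the count of cycle vertices and hence $(n,k)=1$, where the paper needs only indegree arithmetic. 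Your version is more self-contained and makes explicit exactly where the indegree-$1$ hypothesis (as opposed to the automatic outdegree $1$) does work, at the cost of invoking the group-theoretic cycle-vertex count; amusingly, your closing ``cross-check'' --- all three statements equivalent to $(n,k)=1$ because the number $\frac{d-1}{d}n$ of indegree-$0$ vertices from Proposition \ref{indegree} vanishes iff $d=1$ --- is in substance the paper's actual proof. Your reading of ``regular of degree $1$'' as indegree and outdegree both equal to $1$ matches the paper's intent.
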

\begin{proof}
Note that $f$ is an automorphism if and only if $(n,k)=1$, then applying Lemma \ref{basic} (2) and (7).
\end{proof}

\begin{prop}\label{connected}
$G(n,k)$ is connected if and only if there exists a positive integer $m$ such that $n|k^{m}$.
\end{prop}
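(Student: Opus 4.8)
The plan is to translate the connectivity of $G(n,k)$ into a statement about the number of cycles, to show that this count equals $1$ exactly when $t=1$, and finally to recognize $t=1$ as a restatement of the arithmetic condition $n\mid k^{m}$. The first reduction is immediate: by Lemma \ref{basic} (6) every component of $G(n,k)$ contains exactly one cycle, so the number of components equals the number of cycles. Hence $G(n,k)$ is connected if and only if it has exactly one cycle.

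Next I would pin down when there is a single cycle. By Proposition \ref{cycle} (4) the number of cycles is $\sum_{d\mid t}\varphi(d)/\ell(d)$, where by Proposition \ref{cycle} (3) each summand $\varphi(d)/\ell(d)$ is the number of cycles of order $d$, hence a positive integer. Since the term for $d=1$ already equals $\varphi(1)/\ell(1)=1$, the whole sum equals $1$ if and only if $t$ has no divisor besides $1$, i.e. $t=1$. Concretely one sees the same thing directly: the identity $1$ is a fixed point and so forms a cycle by itself, while if $t>1$ then $H$ contains an element whose order is $>1$ and divides $t$, which is a cycle vertex by Proposition \ref{vertex} and, by Proposition \ref{same order}, cannot lie in the same cycle as $1$; this gives at least two cycles. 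Thus $G(n,k)$ is connected if and only if $t=1$.

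Finally I would identify $t=1$ with the stated condition. Since $t$ is by definition the largest divisor of $n$ coprime to $k$, the equality $t=1$ holds precisely when every prime dividing $n$ also divides $k$. If this is so, writing $n=\prod_{i}p_{i}^{e_{i}}$ one has $n\mid k^{m}$ for $m=\max_{i}e_{i}$; conversely, if some prime $p\mid n$ satisfies $p\nmid k$, then $p\nmid k^{m}$ for every $m$, so $n\nmid k^{m}$. Hence there exists a positive integer $m$ with $n\mid k^{m}$ if and only if $t=1$, which closes the chain of equivalences.

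The main obstacle is the middle step, where one must rule out any coincidental collapse of the cycle count to $1$ when $t>1$: the cleanest way is to note that every summand $\varphi(d)/\ell(d)$ is a genuine positive integer, so the sum can equal $1$ only when the single divisor $d=1$ occurs, while the alternative is to exhibit the two distinct cycles explicitly (the fixed point $1$ and a cycle through an element of higher order) via Propositions \ref{vertex} and \ref{same order}. The opening reduction and the closing arithmetic equivalence are routine once the earlier results are available.
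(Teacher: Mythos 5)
Your proof is correct, but it takes a genuinely different route from the paper's. The paper argues directly in both directions: if $n\mid k^{m}$ then $a^{k^{m}}=1$ for every $a\in H$, so every vertex has a directed path to the identity and $G(n,k)$ is connected; conversely, if $G(n,k)$ is connected then the unique cycle must be the loop at $1$, so iterating $f$ on a generator $a$ eventually gives $a^{k^{m}}=1$, whence $n={\rm ord}(a)\mid k^{m}$. You share the opening reduction (connected iff exactly one cycle, via Lemma \ref{basic} (6)), but in place of the generator argument you pivot on the factorization $n=tw$: the cycle count $\sum_{d\mid t}\varphi(d)/\ell(d)$ of Proposition \ref{cycle} (4), each summand being a positive integer (for each $d\mid t$ the cyclic group $H$ contains $\varphi(d)\ge 1$ elements of order $d$, all cycle vertices by Proposition \ref{vertex}), equals $1$ iff $t=1$; and $t=1$ iff $n\mid k^{m}$ for some $m$, by prime factorization. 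Your version invokes heavier machinery where the paper needs only that $1$ is a fixed point and that a generator has order $n$, but it buys something the paper relegates to an unproved remark immediately after its proof: the equivalence of connectivity with $t=1$ (equivalently, with the identity being the only cycle vertex), which you establish en route. Your fallback for the middle step --- the loop at $1$ together with, when $t>1$, a cycle through an element of order $p\mid t$, $p$ prime, necessarily distinct from the loop at $1$ by Proposition \ref{same order} --- is the lighter and arguably preferable substitute for the counting formula, since it needs only Propositions \ref{vertex} and \ref{same order}.
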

\begin{proof}
Suppose $n|k^{m}$. Then for any $a\in H$, $a^{k^{m}}=1$. So $G(n,k)$ is connected.

Conversely, suppose $G(n,k)$ is connected. By Lemma \ref{basic} (7), there is only one cycle, that is $\{1\}$.
By Lemma \ref{basic} (6), for any $a\in H$, there is a positive integer $m$ such that $a^{k^{m}}=1$.
If $a$ is a generator of $H$, then $n|k^{m}$.
\end{proof}

Notice that there exists a positive integer $m$ such that $n|k^{m}$ if and only if $t=1$. Hence,
$G(n,k)$ is connected if and only if $G(n,k)$ has only one cycle vertex, that is the identity element.

\begin{prop}
The following statements are equivalent.

$(1)$ $G(n,k)$ is arc-transitive.

$(2)$ $G(n,k)$ is vertex-transitive.

$(3)$ $f$ is the identity.
\end{prop}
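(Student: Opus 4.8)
The plan is to prove the three statements equivalent by running the cycle $(3)\Rightarrow(1)\Rightarrow(2)\Rightarrow(3)$. The first two implications are soft; the whole content sits in $(2)\Rightarrow(3)$, where I must recover the algebraic rigidity of $f$ from a purely combinatorial symmetry hypothesis on $G(n,k)$.

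For $(3)\Rightarrow(1)$ I would argue that if $f$ is the identity then $a^{k}=a$ for every $a\in H$, so the only arcs of $G(n,k)$ are the self-loops $(a,a)$. Hence \emph{every} permutation of the vertex set preserves the arc set, so ${\rm Aut}(G(n,k))$ is the full symmetric group on $H$ and acts transitively on the arcs $\{(a,a):a\in H\}$; thus $G(n,k)$ is arc-transitive. For $(1)\Rightarrow(2)$ I would use Lemma \ref{basic}(1): every vertex $u$ has outdegree $1$, so it is the tail of the unique arc $(u,f(u))$. Given vertices $u,v$, arc-transitivity yields an automorphism $\phi$ carrying $(u,f(u))$ to $(v,f(v))$, and comparing tails gives $\phi(u)=v$; since $u,v$ were arbitrary, $G(n,k)$ is vertex-transitive.

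The heart is $(2)\Rightarrow(3)$, and here my main idea is to exploit the one vertex whose local structure is guaranteed: the identity $1$, which is always a fixed point since $f(1)=1^{k}=1$ and therefore carries the self-loop $(1,1)$. Fixing an arbitrary vertex $v$, vertex-transitivity supplies an automorphism $\phi$ with $\phi(1)=v$; as an automorphism sends arcs to arcs it maps $(1,1)$ to $(\phi(1),\phi(1))=(v,v)$, forcing $(v,v)$ to be an arc, i.e.\ $v^{k}=v$. Letting $v$ range over $H$ gives $f(v)=v$ for all $v$, so $f$ is the identity.

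The step I expect to be the main obstacle — or at least the one needing care — is $(2)\Rightarrow(3)$, specifically pinning down that digraph automorphisms must preserve self-loops and that a single always-present fixed point already suffices. If one prefers to avoid loops entirely, there is a fallback: vertex-transitivity forces constant indegree, but by Lemma \ref{basic}(2) and Proposition \ref{indegree} the indegrees take the two distinct values $0$ and $(n,k)$ unless $(n,k)=1$, so first $(n,k)=1$; then by Proposition \ref{regular} the digraph is a disjoint union of directed cycles, whose lengths must all coincide under vertex-transitivity, and since the identity already lies on a cycle of length $1$ every cycle has length $1$, which again says $f$ is the identity.
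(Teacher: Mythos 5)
Your proposal is correct and rests on the same key observation as the paper's proof: the ever-present loop at the identity vertex (since $1^{k}=1$), which under arc- or vertex-transitivity propagates a loop to every vertex, forcing $f(v)=v$ for all $v$. The only difference is organizational and cosmetic --- you run the cycle $(3)\Rightarrow(1)\Rightarrow(2)\Rightarrow(3)$, with a clean outdegree-$1$ tail argument for $(1)\Rightarrow(2)$, while the paper shows each of $(1)$ and $(2)$ directly equivalent to $n\mid(k-1)$ and then uses $1\le k\le n$ to conclude $k=1$.
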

\begin{proof}
Note that there exist loops in $G(n,k)$. So $G(n,k)$ is arc-transitive if and only if there are no other edges except loops,
that is for any $a\in G(n,k), f(a)=a$, that is for any $a\in G(n,k), a^{k-1}=1$, that is $n|(k-1)$.

Applying the same argument as the above paragraph, we have $G(n,k)$ is vertex-transitive if and only if $n|k-1$.

Notice that $1\le k\le n$, we get the desired result.
\end{proof}

Since the number of distinct endomorphisms of $H$ is $n$, we attain $n$ distinct digraphs by our manner.
There is an interesting problem that whether there exist isomorphic digraphs among them. In \cite{LMR}, the authors
gave an example $G(10,2)\cong G(10,8)$.

\begin{prop}
If $n$ is a prime, for any $1<k_{1}<k_{2}<n$, $G(n,k_{1})\cong G(n,k_{2})$ if and only if ${\rm ord}_{n}k_{1}={\rm ord}_{n}k_{2}$.
\end{prop}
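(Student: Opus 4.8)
The plan is to use the fact that when $n$ is prime and $1<k<n$ the map $f$ is an automorphism, so $G(n,k)$ is a vertex-disjoint union of directed cycles; for such digraphs isomorphism is governed entirely by the multiset of cycle lengths. I would then compute this multiset explicitly and compare.

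First I would observe that since $n$ is prime and $1<k<n$ we have $(n,k)=1$. By Proposition \ref{regular} this forces $f$ to be an automorphism and every component of $G(n,k)$ to be a cycle, so $G(n,k)$ is a vertex-disjoint union of directed cycles (a $1$-regular digraph).

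Next I would pin down the cycle type. Since $(n,k)=1$ we get $t=n$, so by Proposition \ref{vertex} every vertex is a cycle vertex, and by Proposition \ref{cycle} each cycle has order dividing $n$, i.e.\ order $1$ or $n$. The identity element gives the unique cycle of order $1$, which is a loop of length $1$. The remaining $n-1$ elements all have order $n$, and by Proposition \ref{cycle}(1) and (3) they split into $(n-1)/{\rm ord}_{n}k$ cycles, each of length $\ell(n)={\rm ord}_{n}k$. Because $1<k<n$ we have $k\not\equiv 1 \pmod{n}$, hence ${\rm ord}_{n}k\ge 2$; in particular the nontrivial cycle length never collides with the single loop. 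Thus the cycle type of $G(n,k)$ is one $1$-cycle together with $(n-1)/\ell$ cycles of length $\ell$, where $\ell={\rm ord}_{n}k$.

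Finally I would invoke that two such $1$-regular digraphs are isomorphic if and only if they have the same multiset of cycle lengths, since any isomorphism must carry cycles to cycles of equal length and this data determines the digraph up to isomorphism. Comparing the cycle types above, $G(n,k_{1})\cong G(n,k_{2})$ holds precisely when ${\rm ord}_{n}k_{1}={\rm ord}_{n}k_{2}$: the common loop always matches, and the nontrivial part matches exactly when the lengths $\ell$, and hence the counts $(n-1)/\ell$, agree. The only point needing a little care — and the main obstacle — is justifying rigorously that the multiset of cycle lengths is a complete isomorphism invariant for disjoint unions of directed cycles, together with ruling out the edge case ${\rm ord}_{n}k=1$ under the hypothesis $1<k<n$; both are dispatched as indicated.
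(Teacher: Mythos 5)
Your proposal is correct and follows essentially the same route as the paper's own proof: use Proposition \ref{regular} to see that $G(n,k)$ is a disjoint union of cycles, then Proposition \ref{cycle} (1) and (3) to identify the cycle type as one loop at the identity plus $(n-1)/{\rm ord}_{n}k$ cycles of length ${\rm ord}_{n}k$, and compare. Your explicit handling of the two small points the paper leaves implicit --- that ${\rm ord}_{n}k\ge 2$ when $1<k<n$ (the paper phrases this as $(n,k_{1}-1)=1$), and that the multiset of cycle lengths is a complete isomorphism invariant for $1$-regular digraphs --- is a welcome tightening, not a different argument.
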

\begin{proof}
Since $(n,k_{1})=1$, by Proposition \ref{regular}, each component of $G(n,k_{1})$ is a cycle.
By Proposition \ref{cycle} (1), there are only two kinds of cycles in $G(n,k_{1})$, one with length $1$,
the other with length ${\rm ord}_{n}k_{1}$. Since $(n,k_{1}-1)=1$, there is only one cycle with length $1$.
By Proposition \ref{cycle} (3), there are $\frac{n-1}{{\rm ord}_{n}k_{1}}$ cycles with length ${\rm ord}_{n}k_{1}$.

We can get similar results for $G(n,k_{2})$. Then we can get the desired result.
\end{proof}

\section{Properties of Trees}

Here we introduce some notations for the tree originating from any given cycle vertex.

For $m\ge 1$, we say a non-cycle vertex $a$ has height $m$ with respect to a cycle vertex $c$
if $m$ is the smallest positive integer such that $a^{k^{m}}=c$.
For $m\ge 1$, let $T_{c}^{m}$ denote the set of non-cycle vertices with height $m$ with respect to the cycle vertex $c$.
Similarly, $T^{m}$ denotes the set of all vertices with height $m$.
For convenience, we put $T_{c}^{0}=\{c\}$ and say $c$ has height 0, $T^{0}$ denotes the set of all cycle vertices.
Let $F_{c}$ be the induced subgraph of $G(n,k)$ with vertices $\bigcup_{m\ge1}T_{c}^{m}$.
In fact, $F_{c}$ is a forest if it is not empty. We can get an induced subgraph of $G(n,k)$ with vertices $\bigcup_{m\ge0}T_{c}^{m}$,
and we delete the loop if it exists, then we get a tree and denote it by $T_{c}$.

All the vertices lie in the trees we define above. As follows, without special instructions,
 the concept of tree means what we define in the above.

We will show that for any cycle vertex $c$, $T_{c}\cong T_{1}$.

\begin{lem}\label{product}
The product of a non-cycle vertex and a cycle vertex is a non-cycle vertex.
\end{lem}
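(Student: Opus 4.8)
The plan is to reduce the statement to the single structural observation that the cycle vertices of $G(n,k)$ form a subgroup of $H$, and then to exploit closure under multiplication and inversion.

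First I would recall from Proposition \ref{vertex} that a vertex $a$ is a cycle vertex precisely when ${\rm ord}(a)\mid t$. Rephrasing this in terms of the group law, the cycle vertices are exactly the elements satisfying $a^{t}=1$. Since $H$ is cyclic and $t\mid n$, the set $C=\{x\in H : x^{t}=1\}$ is the kernel of the endomorphism $x\mapsto x^{t}$, hence a subgroup of $H$, in fact the unique cyclic subgroup of order $t$. So the first key step is simply to identify the set of cycle vertices with the subgroup $C$.

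Next, let $a$ be a non-cycle vertex and $c$ a cycle vertex, so that $a\notin C$ while $c\in C$. I would then argue by contradiction: suppose the product $ac$ were a cycle vertex, i.e. $ac\in C$. Because $C$ is a subgroup it is closed under inverses and products, so $a=(ac)c^{-1}\in C$, which forces $a$ to be a cycle vertex. This contradicts the hypothesis that $a$ is a non-cycle vertex, and therefore $ac$ must itself be a non-cycle vertex, as claimed.

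The argument is short, so I do not expect a serious obstacle; the only point that genuinely needs care is the identification of the cycle-vertex set with a subgroup. Once Proposition \ref{vertex} is invoked this is just the standard fact that in a cyclic group the solutions of $x^{t}=1$ constitute the unique subgroup of order $t$. An alternative and more computational route would be to work directly with orders and the coprime factorization $n=tw$, decomposing $a$ through the isomorphism $H\cong H_{t}\times H_{w}$ and noting that multiplying by $c\in H_{t}$ leaves the $H_{w}$-component unchanged and nontrivial; but the subgroup argument is cleaner and avoids this bookkeeping.
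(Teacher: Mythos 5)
Your proposal is correct and follows exactly the paper's own argument: the paper's proof is the one-line observation that, by Proposition \ref{vertex}, the cycle vertices form a subgroup, and your write-up merely spells out the details (identifying the cycle vertices with the kernel of $x\mapsto x^{t}$ and using closure under products and inverses via $a=(ac)c^{-1}$).
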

\begin{proof}
Notice that by Proposition \ref{vertex}, the cycle vertices of $G(n,k)$ form a subgroup.
\end{proof}

\begin{lem}\label{product1}
If $a\in T_{1}^{h}, h\ge 1$ and $c$ is a cycle vertex, then $ac\in T_{c^{k^{h}}}^{h}$.
\end{lem}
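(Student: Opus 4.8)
The plan is to verify directly the two defining conditions for membership in $T_{c^{k^h}}^{h}$: that $ac$ is a non-cycle vertex, and that $h$ is the least positive integer with $(ac)^{k^{h}}=c^{k^{h}}$. First I would dispose of the easy parts. Since $a\in T_{1}^{h}$ with $h\ge 1$, the vertex $a$ is a non-cycle vertex, so by Lemma \ref{product} the product $ac$ of the non-cycle vertex $a$ and the cycle vertex $c$ is again a non-cycle vertex. Also $c^{k^{h}}$ is a cycle vertex: as $c$ is a cycle vertex, ${\rm ord}(c)\mid t$ by Proposition \ref{vertex}, whence ${\rm ord}(c^{k^{h}})\mid t$ and $c^{k^{h}}$ is a cycle vertex too. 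Because $H$ is abelian and $a^{k^{h}}=1$ (which is exactly what $a\in T_{1}^{h}$ means), I compute $(ac)^{k^{h}}=a^{k^{h}}c^{k^{h}}=c^{k^{h}}$, so the height of $ac$ with respect to $c^{k^{h}}$ is at most $h$.

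The remaining and main point is minimality: I must show $(ac)^{k^{m}}\ne c^{k^{h}}$ for every $1\le m<h$. Suppose not. By commutativity $(ac)^{k^{m}}=a^{k^{m}}c^{k^{m}}$, so the assumption rearranges to $a^{k^{m}}=c^{k^{h}-k^{m}}$, exhibiting $a^{k^{m}}$ as a power of $c$ and hence as a cycle vertex. Since $m<h$ and $h$ is the least exponent with $a^{k^{h}}=1$, we have $a^{k^{m}}\ne 1$. But cycle vertices form a subgroup whose elements have order dividing $t$, and $(t,k)=1$, so the $k$-th power map is injective on this subgroup; therefore $a^{k^{h}}=(a^{k^{m}})^{k^{h-m}}\ne 1$, contradicting $a^{k^{h}}=1$. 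This rules out equality for all $m<h$ and pins the height at exactly $h$.

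I expect this injectivity/minimality interplay to be the only genuine obstacle. A conceptually cleaner packaging of the same step is the coprime decomposition $H\cong H_{t}\times H_{w}$, where $H_{t}$ is the subgroup of cycle vertices: then $a\in T_{1}^{h}$ forces the $t$-part of $a$ to be trivial (because $(t,k)=1$ makes $a_{t}^{k^{h}}=1$ imply $a_{t}=1$), so $a\in H_{w}$, every $a^{k^{m}}$ lies in $H_{w}$, and $H_{w}$ meets the cycle-vertex subgroup $H_{t}$ only in $1$. Comparing the $H_{w}$-components of $(ac)^{k^{m}}=a^{k^{m}}c^{k^{m}}$ and of $c^{k^{h}}\in H_{t}$ then rules out equality for $m<h$ at a glance. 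Either way, $ac\in T_{c^{k^{h}}}^{h}$ follows.
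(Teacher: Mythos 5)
Your proof is correct, and its first two steps coincide with the paper's: non-cycleness of $ac$ via Lemma \ref{product}, and the computation $(ac)^{k^{h}}=a^{k^{h}}c^{k^{h}}=c^{k^{h}}$. Where you genuinely diverge is the minimality step. The paper handles it in one line by applying Lemma \ref{product} a second time: since $a\in T_{1}^{h}$, the vertex $a^{k^{h-1}}$ is a non-cycle vertex, hence $(ac)^{k^{h-1}}=a^{k^{h-1}}c^{k^{h-1}}$ is again a non-cycle vertex. This is marginally stronger than what you prove, since it shows $ac$ has not reached \emph{any} cycle vertex by step $h-1$, not merely that $(ac)^{k^{m}}\ne c^{k^{h}}$; the stronger form is what makes the height partition well-defined. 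Your contradiction argument --- forcing $a^{k^{m}}=c^{k^{h}-k^{m}}$ to be a nontrivial cycle vertex and then invoking injectivity of the $k$-th power map on $H_{t}$, or equivalently your $H\cong H_{t}\times H_{w}$ splitting --- is valid, and it has the virtue of making explicit the coprimality $(t,k)=1$ that the paper's one-liner silently relies on: the paper's assertion $a^{k^{h-1}}\notin T^{0}$ itself needs exactly this mechanism, since a cycle vertex has order dividing $t$ and one whose $k$-th power is $1$ must then be $1$, contradicting the minimality of $h$. So your route is a little longer but self-contained where the paper leaves a small verification to the reader. The only adjustment worth making: to get the stronger non-cycle statement, run your contradiction with the supposition that $(ac)^{k^{m}}$ is \emph{any} cycle vertex (then $a^{k^{m}}=(ac)^{k^{m}}c^{-k^{m}}$ lies in the subgroup $H_{t}$ of cycle vertices and the same injectivity argument applies verbatim), rather than only supposing $(ac)^{k^{m}}=c^{k^{h}}$.
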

\begin{proof}
By Lemma \ref{product}, $ac\notin T^{0}$. Furthermore, $(ac)^{k^{h}}=c^{k^{h}}$ is a cycle vertex
but $(ac)^{k^{h-1}}$ is a non-cycle vertex because $a^{k^{h-1}}\notin T^{0}$, which implies $ac\in T_{c^{k^{h}}}^{h}$.
\end{proof}

\begin{thm}\label{iso1}
Let $c$ be a cycle vertex, then $F_{c}\cong F_{1}$.
\end{thm}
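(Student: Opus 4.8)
The plan is to build an explicit digraph isomorphism $\psi\colon F_1\to F_c$ that respects the height filtration, constructing it one height level at a time. The naive guess, multiplication by $c$, fails: since $x\mapsto xc$ satisfies $f(xc)=x^kc^k$ rather than $f(x)\cdot c=x^kc$, it is a digraph homomorphism only when $c^{k}=c$, and indeed by Lemma \ref{product1} it sends $T_1^h$ into $T_{c^{k^h}}^h$, smearing $F_1$ across the whole cycle through $c$ instead of landing in $F_c$. The remedy is to use a height-dependent multiplier.

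Write $d=\mathrm{ord}(c)$ and $\ell=\ell(d)=\mathrm{ord}_d k$ for the length of the cycle through $c$ (Proposition \ref{cycle}). For each $h\ge 1$ I would set $b(h)=c^{k^{j}}$, where $j\in\{0,\dots,\ell-1\}$ is chosen with $j\equiv -h\pmod{\ell}$; this $b(h)$ is a cycle vertex lying on the cycle through $c$, it depends only on $h\bmod\ell$, and by construction $b(h)^{k^h}=c^{k^{h+j}}=c$ since $h+j\equiv 0\pmod{\ell}$ and $c^{k^{\ell}}=c$. I then define $\psi$ on $T_1^h$ by $\psi(a)=a\,b(h)$. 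By Lemma \ref{product1} applied with the cycle vertex $b(h)$ we get $\psi(a)\in T_{b(h)^{k^h}}^h=T_c^h$, so $\psi$ carries $T_1^h$ into $T_c^h$ for every $h$, and hence $\psi(F_1)\subseteq F_c$.

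Next I would check that $\psi$ is a bijection level by level. On $T_1^h$ the map is multiplication by the fixed group element $b(h)$, hence injective; for surjectivity onto $T_c^h$ I would verify that $y\mapsto y\,b(h)^{-1}$ sends $T_c^h$ back into $T_1^h$, using that $b(h)^{-1}$ is again a cycle vertex (the cycle vertices form a subgroup by Proposition \ref{vertex}, cf. Lemma \ref{product}). Indeed $(y\,b(h)^{-1})^{k^h}=c\,c^{-1}=1$, while $(y\,b(h)^{-1})^{k^{h-1}}=y^{k^{h-1}}b(h)^{-k^{h-1}}$ is a non-cycle vertex by Lemma \ref{product}, so its height with respect to $1$ is exactly $h$. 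Thus each restriction $\psi\colon T_1^h\to T_c^h$ is a bijection, and summing over $h\ge 1$ makes $\psi\colon F_1\to F_c$ a bijection (the argument is vacuously valid, so $F_c\cong F_1$ still holds, when these forests are empty).

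Finally I would confirm that $\psi$ preserves edges. The only edges of $F_1$ are $a\to a^k$ with $a\in T_1^h$ and $h\ge 2$ (a height-$1$ vertex maps to the deleted root $1$), and similarly for $F_c$; so it suffices to show $\psi(a)^k=\psi(a^k)$. Here $\psi(a)^k=a^k\,b(h)^k$ and $\psi(a^k)=a^k\,b(h-1)$ because $a^k\in T_1^{h-1}$, so the identity reduces to $b(h)^k=b(h-1)$, which holds since $b(h)^k=c^{k^{j+1}}$ and $j+1\equiv -(h-1)\pmod{\ell}$. Hence $\psi$ is a digraph isomorphism. The one genuinely delicate point, and the heart of the argument, is the choice of the multipliers $b(h)$: they must simultaneously solve $b(h)^{k^h}=c$ (so that level $h$ lands in $T_c^h$) and satisfy the recursion $b(h)^k=b(h-1)$ (so that the edge relation is respected), and the congruence $j\equiv -h\pmod\ell$ is precisely what reconciles these two demands.
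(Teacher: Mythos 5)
Your proof is correct and is essentially the paper's own argument: your multiplier $b(h)=c^{k^{j}}$ with $j\equiv -h\pmod{\ell}$ is just an explicit formula for the paper's $c_{h}$, the unique cycle vertex satisfying $c_{h}^{k^{h}}=c$, and your level-by-level bijection $a\mapsto a\,b(h)$, the inverse check via $y\mapsto y\,b(h)^{-1}$, and the edge-compatibility recursion $b(h)^{k}=b(h-1)$ reproduce the paper's maps $g_{h}$ and its identity $c_{h}^{k}=c_{h-1}$ step for step. The only difference is presentational: the paper invokes the existence and uniqueness of $c_{h}$ abstractly (the $k^{h}$-power map is an automorphism of the subgroup of cycle vertices since $(k,t)=1$), whereas you construct it concretely from the cycle through $c$.
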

\begin{proof}
First we show that there exists an one to one correspondence between the vertices of $T_{1}^{h}$ and $T_{c}^{h}$ for all heights $h\ge 1$,
and hence between $F_{1}$ and $F_{c}$. Let $h$ be fixed and let $c_{h}$ denote the unique cycle vertex such that
$c_{h}^{k^{h}}=c$. From Lemma \ref{product1}, define $g_{h}:T_{1}^{h}\to T_{c}^{h}$ by $g_{h}(a)=ac_{h}$.

For any $b\in T_{c}^{h}$, $(b\cdot c_{h}^{-1})^{k^{h}}=b^{k^{h}}c^{-1}=1$ and $(b\cdot c_{h}^{-1})^{k^{h-1}}\notin T^{0}$
because $b^{k^{h-1}}\notin T^{0}$. It follows that $b\cdot c_{h}^{-1}\in T_{1}^{h}$. Then $g_{h}(b\cdot c_{h}^{-1})=b$.
So $g_{h}$ is surjective. It is obvious that $g_{h}$ is injective. So $g_{h}$ is one to one.

Combining these $g_{h}$, we get a bijective map $g$ from $F_{1}$ to $F_{c}$.

It remain to show that $g$ is indeed an isomorphism.
For any directed edge of $F_{1}$, it is from some $a\in T_{1}^{h}$ to $a^{k}\in T_{1}^{h-1}$ for some $h$.
We only need to show that there exists a directed edge from $g(a)$ to $g(a^{k})$ in $F_{c}$, that is $(g(a))^{k}=g(a^{k})$,
that is $(g_{h}(a))^{k}=g_{h-1}(a^{k})$. Now $c_{h}^{k^{h}}=c$ implies $(c_{h}^{k})^{k^{h-1}}=c$,
by the uniqueness of $c_{h-1}$, we have $c_{h}^{k}=c_{h-1}$. So $(g_{h}(a))^{k}=(ac_{h})^{k}=a^{k}c_{h-1}=g_{h-1}(a^{k})$.
\end{proof}

\begin{cor}\label{iso}
Let $c$ be a cycle vertex, then $T_{c}\cong T_{1}$.
\end{cor}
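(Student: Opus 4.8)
The plan is to deduce Corollary~\ref{iso} directly from Theorem~\ref{iso1}, which already establishes the forest isomorphism $F_c \cong F_1$. Recall that $T_c$ is obtained from $F_c$ by adjoining the single root vertex $c$ (the set $T_c^0 = \{c\}$) together with the edges connecting height-$1$ vertices to $c$, and then deleting the loop at $c$ if present; similarly $T_1$ is $F_1$ with the root $1$ adjoined. So morally $T_c$ and $T_1$ differ from $F_c$ and $F_1$ only by one extra vertex and the edges into it, and the existing isomorphism should extend.

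First I would take the bijection $g \colon F_1 \to F_c$ produced in the proof of Theorem~\ref{iso1}, built from the maps $g_h(a) = a c_h$ on each height level $T_1^h$. I would extend it to a map $\bar g \colon T_1 \to T_c$ by additionally sending the root $1 \in T_1^0$ to the root $c \in T_c^0$; since $g$ is already a bijection on the non-root vertices and the two roots are the only remaining vertices, $\bar g$ is a bijection of vertex sets. It then remains to check that $\bar g$ preserves the edge relation. The edges internal to the forests are handled verbatim by Theorem~\ref{iso1}, so the only new edges to verify are those from height-$1$ vertices to the root.

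Concretely, for any $a \in T_1^1$ the tree $T_1$ has an edge from $a$ to $a^k = 1$, and I must show $T_c$ has the corresponding edge from $\bar g(a) = g_1(a) = a c_1$ to $\bar g(1) = c$. This is exactly the computation $(a c_1)^k = a^k c_1^k = 1 \cdot c = c$, using $a^k = 1$ and the relation $c_1^k = c$ that follows from the defining property $c_1^{k^1} = c$ of $c_1$; thus the image of such an edge is the edge from $a c_1$ to $c$, as required. Conversely every edge into the root of $T_c$ arises this way by the surjectivity of $g_1$, so no edges are lost or spuriously created.

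The only genuine subtlety is the loop-deletion convention built into the definition of $T_c$ and $T_1$: a root can carry a loop (when it is a fixed point of $f$), and such loops are removed. I would observe that $1$ is a fixed point precisely when $T_1$ loses a loop at its root, and that $c$ is likewise a fixed point in exactly the corresponding situation, so the two trees either both delete a root loop or neither does; hence $\bar g$ respects this deletion and the resulting graphs match edge-for-edge. I expect this bookkeeping about the root loop to be the only point requiring care, while the rest is an immediate extension of the already-proven forest isomorphism.
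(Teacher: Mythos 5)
Your proposal is correct and follows essentially the same route as the paper, whose entire proof of Corollary~\ref{iso} is the one-line remark that it follows from Theorem~\ref{iso1} together with the relation between $T_{c}$ and $F_{c}$; you have simply made that relation explicit, and extending $g$ by sending the root $1$ to the root $c$ with the check $(ac_{1})^{k}=a^{k}c_{1}^{k}=1\cdot c=c$ is exactly the right verification. One small inaccuracy in your loop bookkeeping: it is not true that $1$ and $c$ carry root loops in ``exactly corresponding situations'' --- the identity satisfies $1^{k}=1$ for every $k$, so $T_{1}$ \emph{always} has a loop deleted at its root, whereas $c$ carries a loop only when its cycle has length $1$. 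The argument survives unchanged, though: when the cycle through $c$ has length greater than $1$, the edge from $c$ to $c^{k}$ leaves the vertex set $\bigcup_{m\ge 0}T_{c}^{m}$, so the induced subgraph has no edge out of $c$ in any case; hence both roots have outdegree $0$ inside their respective trees, the graphs match edge-for-edge, and your $\bar g$ is indeed an isomorphism.
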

\begin{proof}
Applying Theorem \ref{iso1} and the relation between $T_{c}$ and $F_{c}$.
\end{proof}

Hence, every tree has the same height, denote it by $h_{0}$, and different trees have the same number of vertices in each height.

\begin{cor}
For any two components $G_{1}$ and $G_{2}$ of $G(n,k)$, $G_{1}\cong G_{2}$ if and only if
the unique cycles in them have the same length.
\end{cor}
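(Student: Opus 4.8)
The plan is to use the structural description established above: every component of $G(n,k)$ consists of its unique cycle (Lemma \ref{basic} (6)) together with the rooted trees hanging off the cycle vertices, and by Corollary \ref{iso} each such tree $T_c$ is isomorphic to $T_1$. Hence a component is, up to isomorphism, a directed cycle of some length $\ell$ with a copy of $T_1$ attached at the root to each of the $\ell$ cycle vertices, so the only invariant that can distinguish two components is the cycle length. I would prove the two implications separately.

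For the forward direction, suppose $\sigma\colon G_1\to G_2$ is a digraph isomorphism. The cycle vertices are characterized intrinsically as those vertices $a$ admitting a directed path from $a$ back to $a$ (equivalently, the recurrent vertices, by Lemma \ref{basic} (5)--(6)). Since $\sigma$ and $\sigma^{-1}$ preserve directed paths, $\sigma$ restricts to an edge-preserving bijection between the cycle of $G_1$ and the cycle of $G_2$; therefore the two cycles have equal length.

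For the converse, assume the cycles have a common length $\ell$, written $c_1\to c_2\to\cdots\to c_\ell\to c_1$ in $G_1$ and $c_1'\to c_2'\to\cdots\to c_\ell'\to c_1'$ in $G_2$. I would first set $g(c_i)=c_i'$ on cycle vertices, which preserves the cyclic edges because both cycles have length $\ell$. For each $i$, Corollary \ref{iso} supplies isomorphisms $T_{c_i}\cong T_1\cong T_{c_i'}$; since the identification of trees matches the height-$0$ vertices, the composite carries the root $c_i$ to the root $c_i'$. I would use this composite to define $g$ on the non-cycle vertices of each $F_{c_i}$. Combining these maps over the disjoint vertex sets produces a bijection $g\colon G_1\to G_2$.

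It then remains to verify that $g$ preserves all edges. Edges internal to a tree $T_{c_i}$ are preserved by the chosen tree isomorphism, and the cyclic edges are preserved by construction; the only edges mixing the two regimes are those from a height-$1$ vertex $a$ (with $a^{k}=c_i$) to its root $c_i$, and these are preserved exactly because the tree isomorphisms send roots to roots, so $g(a)$ is a height-$1$ vertex over $c_i'=g(c_i)$. The step requiring the most care is precisely this compatibility between the cyclic identification $c_i\mapsto c_i'$ and the tree isomorphisms at their roots; once the root-preservation furnished by Corollary \ref{iso} is invoked, the remaining edge checks are routine.
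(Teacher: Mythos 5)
Your proof is correct and takes exactly the route the paper intends: the paper states this corollary without proof as an immediate consequence of Corollary \ref{iso}, and your argument---that cycle length is an isomorphism invariant (cycle vertices being precisely the recurrent ones), and conversely that one can match the two cycles position-by-position and glue the height-preserving, hence root-preserving, isomorphisms $T_{c_{i}}\cong T_{1}\cong T_{c_{i}'}$ from Theorem \ref{iso1} and Corollary \ref{iso}---is precisely the omitted verification, including the one delicate point (compatibility of the cyclic identification with the tree isomorphisms at the roots). Nothing needs to be corrected.
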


There is another property of the map $g_{h}$ in Theorem \ref{iso1}, see the following proposition.
\begin{prop}
If $a\in T_{1}^{h}$ and $b\in T_{c}^{h}$ with $c_{h}$ the cycle vertex such that $b=ac_{h}$,
then ${\rm ord}(b)={\rm ord}(a)\cdot {\rm ord}(c)$.
\end{prop}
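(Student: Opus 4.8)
The plan is to exploit the coprime factorization $n=tw$, together with Proposition \ref{vertex}, to split every element of $H$ into its ``tree part'' (of order coprime to $t$) and its ``cycle part'' (of order dividing $t$). Since $H$ is abelian and $(t,w)=1$, whenever one factor has order dividing $w$ and the other has order dividing $t$, the order of the product equals the product of the orders. So I would first locate $a$ and $c_{h}$ in these two coprime pieces, and then reduce ${\rm ord}(c_{h})$ to ${\rm ord}(c)$.

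First I would show that ${\rm ord}(a)\mid w$. Since $a\in T_{1}^{h}$ we have $a^{k^{h}}=1$, hence ${\rm ord}(a)\mid k^{h}$. Any prime dividing both ${\rm ord}(a)$ and $t$ would then divide $k$, contradicting $(k,t)=1$; thus $({\rm ord}(a),t)=1$, and as ${\rm ord}(a)\mid n=tw$ this forces ${\rm ord}(a)\mid w$.

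Next I would handle the cycle vertex $c_{h}$. By its definition in Theorem \ref{iso1}, $c_{h}$ is a cycle vertex satisfying $c_{h}^{k^{h}}=c$, so ${\rm ord}(c_{h})\mid t$ by Proposition \ref{vertex}. Because $(t,k)=1$ we get $({\rm ord}(c_{h}),k^{h})=1$, whence ${\rm ord}(c)={\rm ord}(c_{h}^{k^{h}})={\rm ord}(c_{h})/({\rm ord}(c_{h}),k^{h})={\rm ord}(c_{h})$. This is the step I expect to be the crux: it is where $b=ac_{h}$ is quietly converted into a product of $a$ with an element of order exactly ${\rm ord}(c)$, and it depends essentially on $c_{h}$ being a cycle vertex so that its order is coprime to $k$.

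Finally, since ${\rm ord}(a)\mid w$, ${\rm ord}(c_{h})\mid t$ and $(t,w)=1$, the integers ${\rm ord}(a)$ and ${\rm ord}(c_{h})$ are coprime. In the abelian group $H$ the order of a product of two commuting elements of coprime orders is the product of their orders, so ${\rm ord}(b)={\rm ord}(ac_{h})={\rm ord}(a)\,{\rm ord}(c_{h})={\rm ord}(a)\,{\rm ord}(c)$, as desired.
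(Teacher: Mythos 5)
Your proof is correct and follows essentially the same route as the paper: both arguments rest on ${\rm ord}(a)\mid k^{h}$ and ${\rm ord}(c_{h})\mid t$ forcing $({\rm ord}(a),{\rm ord}(c_{h}))=1$, so that the order of the product $b=ac_{h}$ is the product of the orders. The only cosmetic difference is that the paper gets ${\rm ord}(c_{h})={\rm ord}(c)$ by citing Proposition \ref{same order} (vertices in the same cycle have the same order), whereas you recompute it via ${\rm ord}(c_{h}^{k^{h}})={\rm ord}(c_{h})/({\rm ord}(c_{h}),k^{h})$, and your detour through ${\rm ord}(a)\mid w$ is unnecessary (coprimality with $t$ already suffices) but harmless.
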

\begin{proof}
Since $a^{k^{h}}=1$, ${\rm ord}(a)|k^{h}$. By Proposition \ref{same order}, ${\rm ord}(c_{h})={\rm ord}(c)|t$. So $({\rm ord}(a), {\rm ord}(c_{h}))=1$. It follows that ${\rm ord}(b)={\rm ord}(a)\cdot {\rm ord}(c)$.
\end{proof}

As follows, we would like to study the tree structures by using heights.

For any $a\in H$, denote its order ${\rm ord}(a)$ by $n_{a}$, and factor $n_{a}$ by $t_{a}w_{a}$, where $t_{a}$ is the
largest factor of $n_{a}$ relatively prime to $k$. So $n_{a}|n, t_{a}|t$ and $w_{a}|w$. Similarly, we denote $a$'s height by $h_{a}$.
The next proposition shows that $h_{a}$ only depends on $w_{a}$.

\begin{prop} \label{height1}
For any $a\in H$, $h_{a}$ is the minimal $h$ such that $w_{a}|k^{h}$.
Especially, $h_{0}$ is the minimal $h$ such that $w|k^{h}$.
\end{prop}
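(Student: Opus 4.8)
The plan is to characterize the height $h_a$ of a non-cycle vertex $a$ directly from the defining condition $a^{k^h}=c$ for the appropriate cycle vertex $c$. First I would observe that for any $h\ge 0$, the vertex $a^{k^h}$ is a cycle vertex precisely when its order divides $t$ (by Proposition \ref{vertex}). Since $\mathrm{ord}(a^{k^h})=n_a/(n_a,k^h)$, I want to identify the smallest $h$ making this quotient divide $t$. Writing $n_a=t_aw_a$ with $(t_a,k)=1$ and every prime of $w_a$ dividing $k$, the coprimality $(t_a,k)=1$ forces $(n_a,k^h)=(w_a,k^h)$, so $\mathrm{ord}(a^{k^h})=t_a\cdot\bigl(w_a/(w_a,k^h)\bigr)$.

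Next I would argue that this order divides $t$ if and only if the factor $w_a/(w_a,k^h)$ equals $1$, i.e.\ if and only if $w_a\mid k^h$. The forward direction uses that $w_a/(w_a,k^h)$ is composed only of primes dividing $k$ (hence dividing $w$), while $t_a\mid t$ and $(t,k)=1$; so for $t_a\cdot(w_a/(w_a,k^h))$ to divide $t$, the $k$-supported part must be trivial. The reverse direction is immediate: if $w_a\mid k^h$ then $\mathrm{ord}(a^{k^h})=t_a\mid t$, making $a^{k^h}$ a cycle vertex. Thus $a^{k^h}$ is a cycle vertex exactly when $w_a\mid k^h$, and since the sequence of gcd's $(w_a,k^h)$ is nondecreasing in $h$, the set of such $h$ is upward closed. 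Therefore $h_a$, being the smallest $h\ge 1$ with $a^{k^h}$ a cycle vertex (equivalently the smallest $h$ with $a^{k^{h}}=c$ for some fixed target cycle vertex $c$), is exactly the minimal $h$ with $w_a\mid k^h$.

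I would then handle one bookkeeping subtlety about the definition of height: the height is defined as the smallest positive $m$ with $a^{k^m}=c$ for a cycle vertex $c$, and I should confirm that the smallest $h$ making $a^{k^h}$ a cycle vertex coincides with the smallest $h$ reaching a \emph{fixed} cycle vertex. This holds because once $a^{k^h}$ lands on a cycle vertex it stays on the cycle, and by Lemma \ref{basic}(6) each non-cycle vertex feeds into exactly one cycle; hence the first $h$ entering $T^0$ is the height. For the final clause, applying this to a generator $a$ of $H$ gives $n_a=n$, hence $w_a=w$, and Corollary \ref{iso} guarantees all trees share the common height $h_0$, which is realized by such a generator (or by any element of order divisible by $w$), yielding that $h_0$ is the minimal $h$ with $w\mid k^h$.

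The main obstacle will be cleanly justifying the equivalence ``$t_a\cdot(w_a/(w_a,k^h))\mid t \iff w_a\mid k^h$,'' which rests on separating the $k$-coprime part $t_a$ from the $k$-supported part $w_a/(w_a,k^h)$ and using that $t$ is the \emph{largest} divisor of $n$ coprime to $k$. I would make this precise by noting that any prime dividing $w_a/(w_a,k^h)$ divides $k$, so cannot divide $t$; combined with $(t_a,t)$-compatibility this forces the $k$-supported factor to be $1$. Everything else is a routine unwinding of the order formula and the monotonicity of $(w_a,k^h)$ in $h$.
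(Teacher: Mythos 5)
Your proposal is correct and follows essentially the same route as the paper: both reduce $h_a$ to the minimal $h$ with ${\rm ord}(a^{k^h})=n_a/(n_a,k^h)\mid t$ via Proposition \ref{vertex}, and then show this is equivalent to $(n_a,k^h)=w_a$, i.e.\ $w_a\mid k^h$. Your write-up merely fills in details the paper leaves implicit (the gcd separation $(n_a,k^h)=(w_a,k^h)$, the prime-support argument that the $k$-part must vanish, the once-on-the-cycle-stays-on-the-cycle bookkeeping, and the deduction of the $h_0$ clause from generators), all of which are sound.
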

\begin{proof}
If $n_{a}\mid t$, then $a$ is a cycle vertex. So $h_{a}=0$. Note that $w_{a}=1$, so the conclusion is correct in this case.

If $n_{a}\nmid t$. Since $h_{a}$ is the minimal $h$ such that $a^{k^{h}}$ is a cycle vertex,
that is the minimal $h$ such that ${\rm ord}(a^{k^{h}})=\frac{n_{a}}{(n_{a},k^{h})}\mid t$, then $h_{a}$ is the minimal $h$ such that
$(n_{a},k^{h})=w_{a}$, that is the minimal $h$ such that $w_{a}\mid k^{h}$.
\end{proof}

\begin{cor}\label{height2}
For any two vertices $a$ and $b$,if $w_{a}=w_{b}$, then they have the same height. Especially, The vertices with the same order
are at the same height.
\end{cor}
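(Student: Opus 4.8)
The plan is to read this off directly from Proposition~\ref{height1}, which does all the real work. That proposition characterizes $h_{a}$ as the minimal $h$ for which $w_{a}\mid k^{h}$. The point to emphasize is that the right-hand side of this characterization is a function of $w_{a}$ alone, with no further dependence on $a$, $n_{a}$, or $t_{a}$.

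First I would make this explicit. For a divisor $d$ of $w$, let $\psi(d)$ denote the least nonnegative integer $h$ with $d\mid k^{h}$. This is well defined: every prime dividing $w$ must divide $k$ (otherwise one could enlarge $t$, contradicting its maximality as the largest divisor of $n$ coprime to $k$), so $d\mid k^{h}$ holds for all sufficiently large $h$. By Proposition~\ref{height1} we have $h_{a}=\psi(w_{a})$ for every vertex $a$. Consequently, if $w_{a}=w_{b}$ then $h_{a}=\psi(w_{a})=\psi(w_{b})=h_{b}$, which proves the first assertion.

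For the special case I would reduce equal order to equal $w$-part. If ${\rm ord}(a)={\rm ord}(b)$, i.e. $n_{a}=n_{b}$, then because the factorization $n_{a}=t_{a}w_{a}$ is determined by $n_{a}$ and $k$ alone --- with $t_{a}$ the largest divisor of $n_{a}$ coprime to $k$ and $w_{a}=n_{a}/t_{a}$ --- we obtain $w_{a}=w_{b}$ (indeed $t_{a}=t_{b}$ as well). The first part then yields $h_{a}=h_{b}$.

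There is no real obstacle here: the statement is an immediate corollary of Proposition~\ref{height1}. The only thing meriting a moment's attention is the canonicity of the decomposition $n_{a}=t_{a}w_{a}$ --- that $w_{a}$ is a function of $n_{a}$ and $k$ --- since this is precisely what lets the hypothesis ``equal order'' collapse into the hypothesis ``equal $w$-part.''
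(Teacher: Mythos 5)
Your proof is correct and takes exactly the route the paper intends: the paper offers no proof for this corollary, presenting it as an immediate consequence of Proposition~\ref{height1} (which it introduces with the remark that $h_{a}$ only depends on $w_{a}$). Your explicit observations --- that the minimal $h$ with $w_{a}\mid k^{h}$ is a function of $w_{a}$ alone, and that $w_{a}$ is canonically determined by $n_{a}$ and $k$ --- simply spell out the two steps the author left tacit.
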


But if $a$ and $b$ have the same height, maybe $w_{a}\ne w_{b}$. For example, see Figure 3 in section 5,
let $a=9$ and $b=40$, then $a$ and $b$ have the same height, but $w_{a}=4$ and $w_{b}=2$.

\begin{cor}\label{largest}
For any vertex $a$, if $w_{a}=w$, then $a$ is at the largest height. Especially,
the generators of $H$ must be at the largest height.
\end{cor}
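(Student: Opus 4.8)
The plan is to deduce everything directly from the height characterization already in hand. By Proposition~\ref{height1}, for any vertex $a$ the height $h_a$ is the minimal $h$ with $w_a \mid k^{h}$, while $h_0$ is the minimal $h$ with $w \mid k^{h}$. Since the preceding discussion shows that every tree has height $h_0$ and every vertex lies in some tree, $h_0$ is exactly the largest height occurring in $G(n,k)$. So to say that $a$ is at the largest height is to say $h_a = h_0$.

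First I would settle the main assertion. If $w_a = w$, then the two minimality conditions above are literally the same condition, so the minimal exponents they define agree, giving $h_a = h_0$. Hence $a$ lies at the largest height, and no further work is required for this part.

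For the ``especially'' clause I would reduce the claim about generators to the first part by checking that a generator automatically satisfies $w_b = w$. A generator $b$ of $H$ has $n_b = {\rm ord}(b) = n$. The factorization $n_b = t_b w_b$ is produced by the same recipe as $n = tw$, namely taking $t_b$ to be the largest divisor of $n_b$ coprime to $k$; since this recipe depends only on $n_b$ and $k$, and here $n_b = n$, it forces $t_b = t$ and therefore $w_b = n_b / t_b = n/t = w$. Applying the first part then shows that $b$ is at the largest height.

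I do not expect a genuine obstacle here: the statement is a direct specialization of Proposition~\ref{height1}. The only point needing a moment's care is the identification $w_b = w$ for a generator, and this is immediate once one observes that the factorization recipe is determined by $n_b$ together with $k$, with $n_b = n$ for any generator.
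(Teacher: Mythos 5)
Your proposal is correct and follows essentially the same route as the paper: both are direct applications of Proposition~\ref{height1}, identifying $h_a$ and $h_0$ as the same minimal exponent when $w_a = w$ (the paper's one-line proof notes $w_a \mid w$ to see $h_0$ is the maximum height, while you invoke the common tree height from Corollary~\ref{iso}, a negligible difference). Your explicit verification that a generator $b$ satisfies $w_b = w$ via ${\rm ord}(b)=n$ just spells out what the paper leaves implicit.
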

\begin{proof}
For any vertex $a$, $w_{a}\mid w$, then applying Proposition \ref{height1}, we get the desired result.
\end{proof}

\begin{cor} \label{prime}
If $k$ is a prime, then for any two vertices $a$ and $b$, they have the same height if and only if $w_{a}=w_{b}$.
\end{cor}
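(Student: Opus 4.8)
The plan is to treat the two directions separately and lean on Proposition~\ref{height1}, which characterizes the height $h_a$ as the minimal $h$ with $w_a \mid k^h$. One implication is already free: if $w_a = w_b$, then $a$ and $b$ have the same height by Corollary~\ref{height2} (which holds without any hypothesis on $k$). So the entire content of the statement is the converse, and this is where the primality of $k$ must enter.

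For the converse, the key observation I would make first is that when $k$ is prime, $w$ is forced to be a power of $k$. Indeed, Proposition~\ref{height1} tells us that $h_0$ is the minimal $h$ with $w \mid k^{h}$, so in particular $w \mid k^{h_0}$; hence every prime dividing $w$ must divide $k$. Since $k$ is prime, its only prime divisor is $k$ itself, so $w = k^{s}$ for some $s \ge 0$. Because $w_a \mid w$ for every vertex $a$, each $w_a$ is likewise a power of $k$, say $w_a = k^{s_a}$ with $0 \le s_a \le s$.

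With this in hand the height becomes transparent. By Proposition~\ref{height1}, $h_a$ is the minimal $h$ such that $w_a \mid k^{h}$, i.e.\ such that $k^{s_a} \mid k^{h}$, i.e.\ such that $s_a \le h$; the minimal such $h$ is simply $h_a = s_a$. Thus the height of $a$ equals the exponent $s_a$, and consequently $w_a = k^{h_a}$ is completely determined by $h_a$. So if $a$ and $b$ have the same height, then $h_a = h_b$ gives $s_a = s_b$, whence $w_a = k^{s_a} = k^{s_b} = w_b$, which is exactly the converse direction.

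The step I expect to be the crux is the first move in the converse: recognizing that the divisibility $w \mid k^{h_0}$ combined with $k$ being prime collapses $w$ (and every $w_a$) into a pure power of $k$. Once that reduction is made, the minimal-$h$ condition of Proposition~\ref{height1} reduces to a comparison of exponents and the bijection between heights and values of $w_a$ is immediate. No delicate computation is required; the only genuine idea is exploiting that a prime has a single prime divisor, so I would present that reduction carefully and let the rest follow routinely.
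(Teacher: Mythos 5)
Your proof is correct and is essentially the paper's argument in expanded form: the paper's one-line proof is precisely the identity $w_{a}=k^{h_{a}}$ (and $w_{b}=k^{h_{b}}$) when $k$ is prime, which is exactly the reduction you establish via Proposition~\ref{height1} after observing that each $w_{a}$ is a pure power of $k$. The only difference is presentational -- you justify that $w_{a}$ is a $k$-power through $w_{a}\mid w\mid k^{h_{0}}$, whereas it also follows immediately from the definition of $w_{a}$ as the part of ${\rm ord}(a)$ supported on primes dividing $k$ -- so there is nothing to flag.
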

\begin{proof}
Since $w_{a}=k^{h_{a}}$ and $w_{b}=k^{h_{b}}$ in this case.
\end{proof}

About the heights of the vertices we have the following proposition and corollary.
\begin{prop}\label{height}
Let $a\in T_{c}$, ${\rm ord}(c)=d|t$ and $h\ge 0$. Then ${\rm ord}(a)|k^{h}d$ if and only if $a\in T_{c}^{m}$, for some $m\le h$.
\end{prop}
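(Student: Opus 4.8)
The plan is to reduce the stated equivalence to a single divisibility involving $w_{a}$ and then quote Proposition \ref{height1}. Since $a\in T_{c}$, the vertex $a$ lies in exactly one set $T_{c}^{h_{a}}$, where $h_{a}\ge 0$ is its height, and by definition $a^{k^{h_{a}}}=c$. Consequently the condition ``$a\in T_{c}^{m}$ for some $m\le h$'' is nothing but $h_{a}\le h$. So it suffices to prove that ${\rm ord}(a)\mid k^{h}d$ if and only if $h_{a}\le h$.

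The crucial step is to express ${\rm ord}(a)$ in terms of $d$ and $w_{a}$. Writing $n_{a}={\rm ord}(a)=t_{a}w_{a}$ as in the paragraph before Proposition \ref{height1}, I claim $t_{a}=d$. Indeed, from $a^{k^{h_{a}}}=c$ we obtain
\[
d={\rm ord}(c)={\rm ord}\big(a^{k^{h_{a}}}\big)=\frac{n_{a}}{(n_{a},k^{h_{a}})}.
\]
Because $(t_{a},k)=1$, the greatest common divisor simplifies to $(n_{a},k^{h_{a}})=(w_{a},k^{h_{a}})$, and since $h_{a}$ is by Proposition \ref{height1} the least exponent with $w_{a}\mid k^{h_{a}}$, this gcd equals $w_{a}$. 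Hence $d=t_{a}w_{a}/w_{a}=t_{a}$, and therefore ${\rm ord}(a)=d\,w_{a}$.

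With ${\rm ord}(a)=d\,w_{a}$ the divisibility becomes transparent: cancelling the common factor $d$ shows that $d\,w_{a}\mid k^{h}d$ holds exactly when $w_{a}\mid k^{h}$ (the coprimality $(d,w_{a})=1$, which follows from $d\mid t$, $w_{a}\mid w$ and $(t,w)=1$, is a convenient sanity check but is not actually needed for the cancellation). Finally, Proposition \ref{height1} tells us that $w_{a}\mid k^{h}$ precisely when $h\ge h_{a}$, which is the equivalence we wanted.

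The only delicate point is the order bookkeeping establishing $t_{a}=d$; everything after that is cancellation of a common factor and a direct appeal to Proposition \ref{height1}. As a consistency check one can inspect the degenerate case $a=c$ of height $0$, where $w_{a}=1$, ${\rm ord}(a)=d$, and both sides of the equivalence hold for every $h$.
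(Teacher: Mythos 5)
Your proof is correct, but it takes a genuinely different route from the paper's. The paper argues both directions directly and more cheaply: for the forward direction, ${\rm ord}(a)\mid k^{h}d$ gives $(a^{k^{h}})^{d}=1$, so ${\rm ord}(a^{k^{h}})\mid d\mid t$ and $a^{k^{h}}$ is a cycle vertex by Proposition \ref{vertex}, whence $a\in T_{c}^{m}$ for some $m\le h$; for the converse, $a^{k^{m}}=c$ with $m\le h$ gives $(a^{k^{m}})^{d}=1$, i.e.\ ${\rm ord}(a)\mid k^{m}d\mid k^{h}d$. You instead pin down the order exactly: your claim $t_{a}=d$, i.e.\ ${\rm ord}(a)=d\,w_{a}$ for every $a\in T_{c}$, is a stronger structural fact than the proposition asks for (it identifies the coprime-to-$k$ part of ${\rm ord}(a)$ with ${\rm ord}(c)$, in the same spirit as the proposition on ${\rm ord}(b)={\rm ord}(a)\cdot{\rm ord}(c)$ following Corollary \ref{iso}), and it reduces the equivalence to $w_{a}\mid k^{h}$ if and only if $h\ge h_{a}$, which is exactly Proposition \ref{height1}. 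Each step checks out: the simplification $(n_{a},k^{h_{a}})=(w_{a},k^{h_{a}})$ needs only $(t_{a},k^{h_{a}})=1$; the cancellation $dw_{a}\mid dk^{h}\Leftrightarrow w_{a}\mid k^{h}$ indeed requires no coprimality, as you note; the reduction of ``$a\in T_{c}^{m}$ for some $m\le h$'' to $h_{a}\le h$ is sound because the sets $T_{c}^{m}$ partition $T_{c}$; and there is no circularity, since Proposition \ref{height1} precedes Proposition \ref{height} and is proved independently. The trade-off: the paper's argument is shorter and self-contained, relying only on Proposition \ref{vertex}, while yours costs more order bookkeeping but delivers the exact formula ${\rm ord}(a)=d\,w_{a}$ as a useful by-product.
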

\begin{proof}
Suppose ${\rm ord}(a)|k^{h}d$. Then $(a^{k^{h}})^{d}=1$, which implies ${\rm ord}(a^{k^{h}})|d$.
So $a^{k^{h}}$ is a cycle vertex. Hence, there exists $m\le h$ such that $a\in T_{c}^{m}$.

Conversely, suppose there exists $m\le h$ such that $a\in T_{c}^{m}$. Then $a^{k^{m}}=c$.
So $(a^{k^{m}})^{d}=c^{d}=1$, which implies ${\rm ord}(a)|k^{m}d$. Hence, ${\rm ord}(a)|k^{h}d$.
\end{proof}

\begin{cor}\label{level}
Let $a\in T_{c}$, ${\rm ord}(c)=d|t$ and $m\ge 1$. Then $a\in T_{c}^{m}$ if and only if ${\rm ord}(a)|k^{m}d$ and ${\rm ord}(a)\nmid k^{m-1}d$.
\end{cor}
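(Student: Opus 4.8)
The plan is to read this off directly from Proposition \ref{height}, applied twice, using the fact that the height of a vertex is a well-defined single value. Recall that each $a \in T_c$ lies in exactly one stratum $T_c^{m'}$ (with $T_c^0 = \{c\}$); this uniqueness is precisely what turns a pair of ``$\le$''-type divisibility statements into an ``exactly $m$'' statement.

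First I would invoke Proposition \ref{height} with $h = m$ to record that ${\rm ord}(a) \mid k^{m}d$ holds if and only if $a \in T_c^{m'}$ for some $m' \le m$. Then, since $m \ge 1$ guarantees $h = m-1 \ge 0$, I would apply the same proposition with $h = m-1$ to obtain that ${\rm ord}(a) \mid k^{m-1}d$ holds if and only if $a \in T_c^{m'}$ for some $m' \le m-1$. Negating the second statement, ${\rm ord}(a) \nmid k^{m-1}d$ is equivalent to $a$ lying at no height $m' \le m-1$.

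Combining the two, the conjunction ``${\rm ord}(a) \mid k^{m}d$ and ${\rm ord}(a) \nmid k^{m-1}d$'' is equivalent to ``$a \in T_c^{m'}$ for some $m' \le m$ but for no $m' \le m-1$''. Because $a$ has a unique height, the only surviving possibility is $m' = m$, that is $a \in T_c^{m}$, which is exactly the asserted equivalence.

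There is no substantive obstacle here: the corollary is pure bookkeeping on top of Proposition \ref{height} together with the uniqueness of heights, so I expect the argument to be a couple of lines. The only point deserving a moment's care is the boundary case $m = 1$, where $k^{m-1}d = d$; applying Proposition \ref{height} with $h = 0$ shows that, for $a \in T_c$, the relation ${\rm ord}(a) \mid d$ holds precisely when $a \in T_c^{0} = \{c\}$, so the condition ${\rm ord}(a) \nmid d$ correctly excludes only the height-$0$ vertex $c$ and the statement remains valid.
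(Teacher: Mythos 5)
Your proof is correct and follows exactly the intended route: the paper states this as an immediate corollary of Proposition \ref{height} with no written proof, and your argument---applying that proposition at $h=m$ and $h=m-1$ and invoking the uniqueness of a vertex's height---is precisely the bookkeeping the paper leaves implicit. Your boundary check at $m=1$ (where $T_c^0=\{c\}$) is a sensible extra verification but introduces nothing beyond the general case.
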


For any $d\ge 1$, let $H_{d}$ be the subgroup of $H$ defined by $H_{d}=\{x\in H|x^{d}=1\}$.
It is well-known that $H_{d}$ is cyclic with order $(n,d)$. By Proposition \ref{vertex}, all cycle vertices of $G(n,k)$ form
the subgroup $H_{t}$. By Lemma \ref{solution}, all vertices with non-zero indegree form the subgroup $H_{\frac{n}{(k,n)}}$.
\begin{cor}\label{subgroup}
For any $d|t$ and $h\ge 0$, $\bigcup\limits_{\substack{0\le m \le h\\c\in T^{0},\, {\rm ord}(c)|d}}T_{c}^{m}$ is exactly the subgroup $H_{k^{h}d}$.
\end{cor}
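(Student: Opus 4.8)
The plan is to prove the set equality by first unwinding the definition $H_{k^{h}d}=\{x\in H\mid x^{k^{h}d}=1\}=\{a\in H\mid {\rm ord}(a)\mid k^{h}d\}$, and then showing that the displayed union is exactly $\{a\in H\mid {\rm ord}(a)\mid k^{h}d\}$. The central tool will be Proposition \ref{height}, together with the observation that every vertex $a$ lies in a unique tree $T_{c}$ whose cycle vertex $c=a^{k^{h_{a}}}$ satisfies ${\rm ord}(c)=t_{a}$, the largest factor of ${\rm ord}(a)$ coprime to $k$. Indeed, whenever $w_{a}\mid k^{h}$ one has $(n_{a},k^{h})=w_{a}$ (this is exactly the computation in the proof of Proposition \ref{height1}), so at $h=h_{a}$ we get ${\rm ord}(c)=n_{a}/(n_{a},k^{h_{a}})=n_{a}/w_{a}=t_{a}$.

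For the forward inclusion, suppose $a$ belongs to the union, say $a\in T_{c}^{m}$ with $0\le m\le h$ and ${\rm ord}(c)\mid d$. Applying Proposition \ref{height} with the divisor ${\rm ord}(c)$ playing the role of its $d$, the fact that $a\in T_{c}^{m}$ for some $m\le h$ yields ${\rm ord}(a)\mid k^{h}{\rm ord}(c)$. Since ${\rm ord}(c)\mid d$ gives $k^{h}{\rm ord}(c)\mid k^{h}d$, we conclude ${\rm ord}(a)\mid k^{h}d$, i.e. $a\in H_{k^{h}d}$. This direction is routine.

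The reverse inclusion is where the real work lies. Given $a$ with ${\rm ord}(a)=t_{a}w_{a}\mid k^{h}d$, let $c$ be the cycle vertex of $a$'s component, so that ${\rm ord}(c)=t_{a}$ by the remark above. The key step is a clean separation of the single divisibility ${\rm ord}(a)\mid k^{h}d$ into its two coprime parts. On one hand, $t_{a}$ is coprime to $k$, hence to $k^{h}$, so $t_{a}\mid k^{h}d$ forces $t_{a}\mid d$; this shows ${\rm ord}(c)=t_{a}\mid d$, so the tree $T_{c}$ really does occur in the union. On the other hand, every prime factor of $w_{a}$ divides $w$ and therefore divides $k$, while $d\mid t$ is coprime to $k$; comparing $p$-adic valuations prime by prime for each $p\mid w_{a}$ (where $v_{p}(d)=0$) shows $w_{a}\mid k^{h}$. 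Hence ${\rm ord}(a)=t_{a}w_{a}\mid k^{h}t_{a}=k^{h}{\rm ord}(c)$, and Proposition \ref{height} places $a$ in $T_{c}^{m}$ for some $m\le h$, so $a$ lies in the union.

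I expect the main obstacle to be precisely this prime-separation argument in the reverse inclusion: one must argue that, because $d$ is coprime to $k$ whereas the $w_{a}$-part of ${\rm ord}(a)$ is built entirely from primes dividing $k$, the divisibility ${\rm ord}(a)\mid k^{h}d$ decouples into the two conditions $t_{a}\mid d$ and $w_{a}\mid k^{h}$. Once this is in hand, both inclusions reduce to direct applications of Proposition \ref{height}, and recalling that $H_{k^{h}d}$ is a subgroup (cyclic of order $(n,k^{h}d)$) completes the identification.
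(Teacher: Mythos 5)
Your proof is correct, and while it shares the paper's skeleton---both inclusions are reduced to Proposition \ref{height}---your handling of the reverse inclusion takes a genuinely different route. The paper simply invokes ``the first part of the proof'' of Proposition \ref{height}: from ${\rm ord}(a)\mid k^{h}d$ one computes $(a^{k^{h}})^{d}=1$ directly, so $a^{k^{h}}$ is a cycle vertex of order dividing $d$ and $a$ sits at height at most $h$ above it; the one point left implicit there is that the root $c$ of the tree actually containing $a$ also satisfies ${\rm ord}(c)\mid d$, which follows because $c$ and $a^{k^{h}}$ lie on the same cycle and hence have equal order by Proposition \ref{same order}. You instead identify the root's order as $t_{a}$ via the computation $(n_{a},k^{h_{a}})=w_{a}$ from the proof of Proposition \ref{height1}, and then decouple the single divisibility ${\rm ord}(a)\mid k^{h}d$ into the coprime conditions $t_{a}\mid d$ and $w_{a}\mid k^{h}$ by comparing $p$-adic valuations (this step is sound: every prime dividing $w_{a}$ divides $k$ by maximality of $t_{a}$, while $(d,k)=1$ since $d\mid t$). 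Your route is longer but buys transparency---it names exactly which tree $T_{c}$ receives $a$ and verifies explicitly that this tree occurs in the union, thereby filling the small gap the paper glosses over---whereas the paper's route buys brevity, since the single computation $(a^{k^{h}})^{d}=1$ replaces your prime-by-prime analysis. Both are valid; no gap in your argument.
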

\begin{proof}
By Proposition \ref{height} and the first part of its proof, this union consists of all $a\in H$ with
${\rm ord}(a)|k^{h}d$ . So it is exactly the subgroup $H_{k^{h}d}$.
\end{proof}

\begin{cor}
For any $l\ge 1$ and $h\ge 0$, $\bigcup\limits_{\substack{0\le m \le h\\c\in T^{0},\, \ell(c)|l}}T_{c}^{m}$ is exactly the subgroup $H_{k^{h}\cdot(t,k^{l}-1)}$.
\end{cor}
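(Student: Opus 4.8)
The plan is to reduce this statement to the previous corollary (Corollary~\ref{subgroup}) by reinterpreting the hypothesis $\ell(c)\mid l$ as a divisibility condition on ${\rm ord}(c)$. Here $\ell(c)$ is understood as the length of the cycle containing the cycle vertex $c$, i.e. $\ell({\rm ord}(c))$, which is well defined by Proposition~\ref{same order}. The two unions in this corollary and in Corollary~\ref{subgroup} range over vertices $T_c^m$ with $0\le m\le h$; they differ only in the constraint imposed on the cycle vertex $c$. So once I show that the constraint $\ell(c)\mid l$ singles out exactly the same cycle vertices as the constraint ${\rm ord}(c)\mid d$ for a suitable $d\mid t$, the two index sets coincide and the result is immediate.

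First I would unwind the condition $\ell(c)\mid l$. Since $c$ is a cycle vertex, Proposition~\ref{vertex} gives ${\rm ord}(c)\mid t$, and Proposition~\ref{cycle}$(1)$ gives $\ell(c)={\rm ord}_{{\rm ord}(c)}k$, the multiplicative order of $k$ modulo ${\rm ord}(c)$. By the defining property of the multiplicative order, $\ell(c)\mid l$ holds if and only if $k^{l}\equiv 1 \pmod{{\rm ord}(c)}$, that is, ${\rm ord}(c)\mid(k^{l}-1)$. Combining this with ${\rm ord}(c)\mid t$, the condition $\ell(c)\mid l$ on a cycle vertex $c$ is equivalent to ${\rm ord}(c)\mid(t,k^{l}-1)$. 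Now I set $d=(t,k^{l}-1)$; since $d\mid t$, Corollary~\ref{subgroup} applies with this $d$ and the given height $h$, yielding that the union of $T_c^m$ over $\{0\le m\le h,\ c\in T^{0},\ {\rm ord}(c)\mid d\}$ equals the subgroup $H_{k^{h}d}=H_{k^{h}\cdot(t,k^{l}-1)}$. By the equivalence just established, this index set is identical to $\{0\le m\le h,\ c\in T^{0},\ \ell(c)\mid l\}$, so the two unions are literally the same collection of vertices, and the claimed equality follows.

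The step I expect to require the most care is the translation $\ell(c)\mid l\iff{\rm ord}(c)\mid(k^{l}-1)$. It hinges on correctly invoking Proposition~\ref{cycle}$(1)$ to rewrite the cycle length as ${\rm ord}_{{\rm ord}(c)}k$, and on the standard fact that ${\rm ord}_{m}k$ divides $l$ precisely when $m\mid(k^{l}-1)$; one should also note that intersecting with the constraint ${\rm ord}(c)\mid t$ is exactly what produces the gcd $(t,k^{l}-1)$ rather than $k^{l}-1$ alone. Everything else is bookkeeping that reduces the problem cleanly to Corollary~\ref{subgroup}.
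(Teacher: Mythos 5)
Your proposal is correct and follows essentially the same route as the paper: both reduce the condition $\ell(c)\mid l$ on a cycle vertex to ${\rm ord}(c)\mid(k^{l}-1)$, intersect with ${\rm ord}(c)\mid t$ to get ${\rm ord}(c)\mid(t,k^{l}-1)$, and then apply Corollary~\ref{subgroup} with $d=(t,k^{l}-1)$. The only cosmetic difference is that you invoke Proposition~\ref{cycle}$(1)$ and the multiplicative order, whereas the paper passes directly through $c^{k^{l}}=c$; these are trivially equivalent.
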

\begin{proof}
Since $c$ is a cycle vertex, ${\rm ord}(c)|t$. Then $\ell(c)|l$ if and only if $c^{k^{l}}=c$, that is ${\rm ord}(c)|(k^{l}-1)$, that is ${\rm ord}(c)|(t,k^{l}-1)$,
then applying Corollary \ref{subgroup}.
\end{proof}

For any set $X$, denote the number of its elements by $|X|$.
\begin{prop}\label{number}
For any cycle vertex $c$, we have:

$(1)$ $|T_{c}|=w$.

$(2)$ For $m\ge 1$, $|T^{m}|=(n,k^{m}t)-(n,k^{m-1}t)$ and $|T_{c}^{m}|=(w,k^{m})-(w,k^{m-1})$.

$(3)$ If $h_{0}\ge 2$, for $1\le m\le h_{0}-1$, the number of vertices in $T_{c}^{m}$ with indegree 0 is
$|T_{c}^{m}|-\frac{|T_{c}^{m+1}|}{(k,n)}$.
\end{prop}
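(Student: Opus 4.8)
The plan is to run all three parts off three engines already in place: Corollary~\ref{subgroup} (a union of levels is a subgroup whose order is a gcd), Corollary~\ref{iso} (all trees are isomorphic level by level), and Lemma~\ref{basic}(2) (the indegree dichotomy). Throughout I will use the factorization $n=tw$ with $(t,k)=1$, so that $(t,k^{h})=1$ and hence $(n,k^{h})=(tw,k^{h})=(w,k^{h})$ for every $h\ge 0$.

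For part $(1)$ I would specialize Corollary~\ref{subgroup} to $d=1$. The condition ${\rm ord}(c)\mid 1$ forces $c=1$, so the union collapses to $\bigcup_{0\le m\le h}T_{1}^{m}$, which equals the subgroup $H_{k^{h}}$ of order $(n,k^{h})$. Taking $h=h_{0}$ captures all of $T_{1}$, so $|T_{1}|=(n,k^{h_{0}})=(w,k^{h_{0}})$. By Proposition~\ref{height1}, $h_{0}$ is the minimal $h$ with $w\mid k^{h}$, whence $(w,k^{h_{0}})=w$. Thus $|T_{1}|=w$, and Corollary~\ref{iso} gives $|T_{c}|=|T_{1}|=w$ for every cycle vertex $c$.

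For part $(2)$ the same idea gives both cumulative counts and then I difference. Taking $d=t$ in Corollary~\ref{subgroup} makes the condition ${\rm ord}(c)\mid t$ automatic for every cycle vertex, so $\bigcup_{0\le m'\le h}T^{m'}=H_{k^{h}t}$ has order $(n,k^{h}t)$; subtracting the $h=m-1$ count from the $h=m$ count yields $|T^{m}|=(n,k^{m}t)-(n,k^{m-1}t)$. Taking instead $d=1$ gives $\bigl|\bigcup_{0\le m'\le h}T_{1}^{m'}\bigr|=(n,k^{h})=(w,k^{h})$, so differencing gives $|T_{1}^{m}|=(w,k^{m})-(w,k^{m-1})$, and Corollary~\ref{iso} promotes this to $|T_{c}^{m}|=(w,k^{m})-(w,k^{m-1})$ for every $c$.

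For part $(3)$ I would count edges between two consecutive levels in two ways. First I claim the set of $f$-predecessors of the vertices of $T_{c}^{m}$ is exactly $T_{c}^{m+1}$: a predecessor $b$ of a non-cycle vertex $a=b^{k}\in T_{c}^{m}$ cannot be a cycle vertex (else $a=b^{k}$ would be one), and the height bookkeeping $b^{k^{m+1}}=a^{k^{m}}=c$ together with minimality of the height of $a$ forces the height of $b$ to be exactly $m+1$; conversely every $b\in T_{c}^{m+1}$ has $b^{k}\in T_{c}^{m}$. Since each vertex of $T_{c}^{m+1}$ has outdegree $1$ into $T_{c}^{m}$, the number of edges from $T_{c}^{m+1}$ to $T_{c}^{m}$ is $|T_{c}^{m+1}|$. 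By Lemma~\ref{basic}(2) each vertex of $T_{c}^{m}$ has indegree $0$ or $(n,k)$, so if $N_{+}$ is the number of full-indegree vertices then $N_{+}\cdot(n,k)=|T_{c}^{m+1}|$, giving $N_{+}=|T_{c}^{m+1}|/(n,k)$ and hence $|T_{c}^{m}|-|T_{c}^{m+1}|/(n,k)$ vertices of indegree $0$; the hypothesis $m\le h_{0}-1$ keeps $T_{c}^{m+1}$ in range so the count is not the degenerate top-level one. The main obstacle is precisely this predecessor-level identification in part $(3)$: one must verify both inclusions and the exact height of each predecessor, and confirm that every non-leaf vertex receives the full $(n,k)$ incoming edges rather than fewer. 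Once that is pinned down, the double counting is immediate.
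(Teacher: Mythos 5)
Your proof is correct and follows essentially the same route as the paper: parts (1) and (2) rest on Corollary~\ref{subgroup} together with Corollary~\ref{iso} (the paper gets $|T_c|=w$ and $|T_c^m|$ by dividing the global counts $n=tw$ and $|T^m|$ among the $t$ isomorphic trees, while you specialize Corollary~\ref{subgroup} to $d=1$ and use $(n,k^h)=(w,k^h)$ with Proposition~\ref{height1} --- an equivalent computation), and part (3) is exactly the paper's double count via Lemma~\ref{solution} and Lemma~\ref{basic}(2). Your explicit check that the predecessors of $T_c^{m}$ are precisely $T_c^{m+1}$, with each positive-indegree vertex receiving all $(n,k)$ of its preimages from that level, is a detail the paper leaves implicit but is the right justification for its one-line proof of (3).
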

\begin{proof}
(1) Note that there are $t$ cycle vertices and $n=wt$, by Corollary \ref{iso}, we have $|T_{c}|=w$.

(2) In Corollary \ref{subgroup}, fix $d=t$, put $h=m$ and $h=m-1$ respectively, we have
$T^{m}=\bigcup\limits_{\rm{ord}(c)|t}T_{c}^{m}=H_{k^{m}t}\setminus H_{k^{m-1}t}$.
So $|T^{m}|=(n,k^{m}t)-(n,k^{m-1}t)$. Since $|T_{c}^{m}|=\frac{1}{t}|T^{m}|$, we get the other formula.

(3) By Lemma \ref{solution}, the number of vertices in $T_{c}^{m}$ with non-zero indegree is $\frac{|T_{c}^{m+1}|}{(k,n)}$.
\end{proof}

Hence, if the unique cycle in a component of $G(n,k)$ has length $r$, then this component has $rw$ vertices.

\begin{cor}
$|T^{h_{0}}|\ge \frac{n}{2}$.
\end{cor}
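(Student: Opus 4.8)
The plan is to reduce everything to the explicit count of top-level vertices provided by Proposition \ref{number}(2) and then to exploit the minimality of $h_{0}$ from Proposition \ref{height1}. By Proposition \ref{number}(2) with $m=h_{0}$, I would start from
\[
|T^{h_{0}}| = (n,k^{h_{0}}t) - (n,k^{h_{0}-1}t).
\]
The first step is to simplify these greatest common divisors using the factorization $n=tw$. Since $tw=t\cdot w$ and $tk^{m}=t\cdot k^{m}$, pulling out the common factor $t$ gives $(n,k^{m}t)=(tw,tk^{m})=t\,(w,k^{m})$ for every $m\ge 0$, with no appeal to coprimality needed. Substituting this identity yields the clean expression
\[
|T^{h_{0}}| = t\bigl[(w,k^{h_{0}}) - (w,k^{h_{0}-1})\bigr].
\]

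Next I would use the characterization of $h_{0}$. By Proposition \ref{height1}, $h_{0}$ is the minimal $h$ with $w\mid k^{h}$, so two facts hold simultaneously: $w\mid k^{h_{0}}$ gives $(w,k^{h_{0}})=w$, while minimality gives $w\nmid k^{h_{0}-1}$, hence $(w,k^{h_{0}-1})$ is a \emph{proper} divisor of $w$. Writing $w'=(w,k^{h_{0}-1})$, the quotient $w/w'$ is an integer strictly greater than $1$, so $w/w'\ge 2$, i.e. $w'\le w/2$. Combining this with the displayed expression gives
\[
|T^{h_{0}}| = t\,(w-w') \ge t\Bigl(w-\tfrac{w}{2}\Bigr) = \tfrac{tw}{2} = \tfrac{n}{2},
\]
which is the desired inequality.

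I would handle the degenerate case $h_{0}=0$ separately, since Proposition \ref{number}(2) is stated only for $m\ge 1$. When $h_{0}=0$ the condition $w\mid k^{0}=1$ forces $w=1$, so $n=t$ and every vertex is a cycle vertex; then $T^{h_{0}}=T^{0}$ has all $n$ vertices and the bound holds trivially. The only genuine subtlety, and the step I expect to require the most care in exposition, is the factoring identity $(n,k^{m}t)=t\,(w,k^{m})$ together with the observation that a proper divisor of $w$ is at most $w/2$; both are elementary, so I anticipate no real obstacle beyond stating them cleanly.
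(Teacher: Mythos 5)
Your proposal is correct and follows essentially the same route as the paper's own proof: both apply Proposition \ref{number}(2) at $m=h_{0}$, use the minimality of $h_{0}$ (so $w\nmid k^{h_{0}-1}$, forcing $(w,k^{h_{0}-1})\le w/2$), and dispose of the case $h_{0}=0$ separately. Your write-up is merely more explicit about the identity $(n,k^{m}t)=t\,(w,k^{m})$, which the paper uses implicitly.
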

\begin{proof}
Recall that $h_{0}$ is the height of the trees.

If $h_{0}=0$, then all vertices are in cycles, so $|T^{h_{0}}|=n \ge \frac{n}{2}$.

If $h_{0}\ge 1$, From Proposition \ref{number} (2), we have $|T^{h_{0}}|=n-(n,k^{h_{0}-1}t)=n-t(w,k^{h_{0}-1})$.
Since $n\nmid k^{h_{0}-1}t$, $w\nmid k^{h_{0}-1}$, which implies $(w,k^{h_{0}-1})\le \frac{w}{2}$.
Hence, we have $|T^{h_{0}}|\ge \frac{n}{2}$.
\end{proof}

In fact, the lower bound in the above corollary is the best one. For example, let $k=6$ and $n=2^{m}$,
where $m\ge 3$, then $t=1$ and $h_{0}=m$, so $|T^{h_{0}}|=n-(n,k^{h_{0}-1}t)=\frac{n}{2}$.

\begin{prop}
If $n\ge 5$ and $n$ is even, then the length of the longest cycle in $G(n,k)$ is less than or equal to $\frac{n-2}{2}$.
\end{prop}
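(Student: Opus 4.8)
The plan is to start from Proposition~\ref{cycle}(2), which identifies the longest cycle length in $G(n,k)$ with $\ell(t)={\rm ord}_{t}k$, and then bound this quantity. Since ${\rm ord}_{t}k$ divides $\varphi(t)$, we have $\ell(t)\le\varphi(t)$, so in most cases the problem reduces to bounding $\varphi(t)$; the one genuinely delicate case will instead require the maximal element order in a non-cyclic unit group. I would split the argument into two cases according to whether $w\ge 2$ or $w=1$ (equivalently, whether $t$ is a proper divisor of $n$ or $t=n$, the latter being exactly the case $(n,k)=1$).

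First suppose $w\ge 2$, so that $t$ is a proper divisor of $n$. Because $n$ is even its smallest prime factor is $2$, hence its largest proper divisor is $n/2$ and so $t\le n/2$. If $t\ge 2$ then $\ell(t)\le\varphi(t)\le t-1\le \frac{n}{2}-1=\frac{n-2}{2}$, while if $t=1$ then $\ell(t)=1\le\frac{n-2}{2}$ since $n\ge 4$. In either subcase the claimed bound holds. Next suppose $w=1$, i.e. $t=n$, so that $\ell(t)={\rm ord}_{n}k\le\varphi(n)$. If $n$ is not a power of $2$, write $n=2^{a}m$ with $m\ge 3$ odd; then $\varphi(n)=2^{a-1}\varphi(m)\le 2^{a-1}(m-1)=\frac{n}{2}-2^{a-1}\le\frac{n-2}{2}$, as required.

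The main obstacle is the remaining case $n=2^{a}$. Here $\varphi(n)=\frac{n}{2}$, which is exactly one larger than the target, so the crude estimate $\ell(t)\le\varphi(n)$ is not enough. To overcome this I would appeal to the structure of the unit group: since $n\ge 5$ forces $a\ge 3$, the group $(\mathbb{Z}/2^{a}\mathbb{Z})^{*}$ is not cyclic but isomorphic to $\mathbb{Z}/2\mathbb{Z}\times\mathbb{Z}/2^{a-2}\mathbb{Z}$, so the multiplicative order of every $k$ coprime to $n$ divides $2^{a-2}=n/4$. Consequently $\ell(t)={\rm ord}_{n}k\le n/4\le\frac{n-2}{2}$ (the last inequality holding for $n\ge 4$), which settles the final case and completes the proof.

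All the other steps are routine: the inequality $t\le n/2$ for a proper divisor of an even number, the elementary bound $\varphi(t)\le t-1$ for $t\ge 2$, and the multiplicativity $\varphi(2^{a}m)=2^{a-1}\varphi(m)$. The only nontrivial input is the non-cyclicity of $(\mathbb{Z}/2^{a}\mathbb{Z})^{*}$ for $a\ge 3$, which is exactly what rescues the otherwise tight power-of-two case.
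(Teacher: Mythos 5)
Your proof is correct, and it diverges from the paper's in an interesting way in one of the two cases. The paper uses the same top-level split as you ($(n,k)\ne 1$ versus $(n,k)=1$, equivalently $w\ge 2$ versus $w=1$), and in the coprime case your argument essentially coincides with the paper's: the same factorization $n=2^{a}m$ with $m$ odd, the same estimate $\varphi(n)=2^{a-1}\varphi(m)<\frac{n}{2}$ when $m\ne 1$, and for $n=2^{a}$ the same underlying input, namely non-cyclicity of $(\mathbb{Z}/2^{a}\mathbb{Z})^{*}$ for $a\ge 3$ --- the paper merely invokes the absence of primitive roots to get $\ell(n)<\varphi(n)=\frac{n}{2}$ and then rounds down, while you extract the sharper bound $\mathrm{ord}_{n}k\le 2^{a-2}=\frac{n}{4}$ from the explicit decomposition $\mathbb{Z}/2\mathbb{Z}\times\mathbb{Z}/2^{a-2}\mathbb{Z}$; either suffices. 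Where you genuinely differ is the case $(n,k)\ne 1$: the paper argues graph-theoretically, citing its preceding corollary $|T^{h_{0}}|\ge \frac{n}{2}$ to conclude that at most $\frac{n}{2}$ vertices are cycle vertices, and then subtracts one because the identity sits in a loop of its own; you instead argue number-theoretically, noting $t=\frac{n}{w}\le\frac{n}{2}$ when $w\ge 2$ (so the evenness of $n$ is not actually needed in this case, only in yours via the ``largest proper divisor'' phrasing, which is a harmless detour) and then bounding $\ell(t)=\mathrm{ord}_{t}k\le\varphi(t)\le t-1$ for $t\ge 2$, with $t=1$ trivial. Your route is more elementary and self-contained, needing only Proposition~\ref{cycle}(2) and standard facts about $\varphi$, whereas the paper's route reuses the tree-height counting machinery of Section~3 it has just developed; note that your inequality $\varphi(t)\le t-1$ plays exactly the role of the paper's subtraction of the identity's loop. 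Both arguments are complete and correct.
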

\begin{proof}
If $(n,k)\ne 1$, then $h_{0}\ge 1$. By the above corollary, the number of non-cycle vertices is more than or
equal to $\frac{n}{2}$, which implies the number of cycle vertices is less than or equal to $\frac{n}{2}$.
Since the identity element of $H$ is in a loop, the length of the longest cycle in $G(n,k)$ is less than
or equal to $\frac{n}{2}-1=\frac{n-2}{2}$.

If $(n,k)=1$, then all vertices are in cycles and $t=n$. Notice that the length of the longest cycle is
$\ell(t)=\ell(n)={\rm ord}_{n}k$.
We factor $n$ as $2^{r}s$, where $r\ge 1$ and $(2,s)=1$. If $s\ne 1$,
then $\ell(n)\le \varphi(n)=2^{r-1}\varphi(s)< 2^{r-1}s = \frac{n}{2}$.
Since $\ell(n)$ is an integer, $\ell(n)\le \frac{n}{2}-1=\frac{n-2}{2}$.
If $s=1$, that is $n=2^{r}$, since $n\ge 5$, $r>2$, which implies $(\mathbb{Z}/n\mathbb{Z})^{*}$ has no primitive roots,
so $\ell(n)< \varphi(n)=2^{r-1}= \frac{n}{2}$, then $\ell(n)\le \frac{n-2}{2}$.
\end{proof}

Hence, the number of vertices in the largest component is less than or equal to $\frac{n-2}{2}w$.
In fact, the upper bound in the above proposition is the best one. For example, let $k=2$ and $n=2p$, $p$ is an odd prime,
and $2$ is the primitive root of $(\mathbb{Z}/p\mathbb{Z})^{*}$, then $t=p$ and $\ell(t)={\rm ord}_{p}2=p-1=\frac{n-2}{2}$.

\section{The adjacency matrix of $G(n,k)$}

For any two vertices $u$ and $v$ of $G(n,k)$, if $u^{k}=v$, we call $u$ a child of $v$.

If the vertex-set of $G(n,k)$ is $\{v_{1},v_{2},\cdots,v_{n}\}$, then the adjacency matrix of $G(n,k)$
is a $n\times n\, (0,1)$-matrix with the $(i,j)$-entry equal to the number of directed edges from $v_{i}$ to $v_{j}$,
we denote it by $A(n,k)$.

We label the vertices of $G(n,k)$ as follows. First, we label the vertices component by component, so we can get a block diagonal matrix.
Second, for each component, we label its vertices height by height according to the child relations.
For example, see Fig. 2 in \cite{VS}, let $H=(\mathbb{Z}/29\mathbb{Z})^{*}$ and $k=2$, then there are three components, see Figure 1.

\begin{figure}[h]
\begin{center}
\setlength{\unitlength}{1cm}
\begin{picture}(20,8)

\multiput(0.65,0)(1.8,0){3}{\circle{0.7}}
\put(0.55,-0.15){$7$}
\put(2.25,-0.15){$20$}
\put(4.05,-0.15){$23$}
\multiput(1,0)(1.8,0){2}{\vector(2,0){1.1}}
\put(2.45,-0.35){\oval(3.6,0.8)[b]}
\put(0.65,-0.45){\vector(0,1){0.1}}
\multiput(0.65,1.3)(1.8,0){3}{\circle{0.7}}
\multiput(0.65,0.95)(1.8,0){3}{\vector(0,-1){0.6}}
\put(0.55,1.2){$6$}
\put(2.25,1.2){$22$}
\put(4.15,1.2){$9$}
\multiput(0.2,2.6)(0.9,0){6}{\circle{0.7}}
\multiput(0.2,2.25)(1.8,0){3}{\vector(2,-3){0.4}}
\multiput(1.1,2.25)(1.8,0){3}{\vector(-2,-3){0.4}}
\put(0.1,2.5){$8$}
\put(0.92,2.5){$21$}
\put(1.82,2.5){$14$}
\put(2.72,2.5){$15$}
\put(3.68,2.5){$3$}
\put(4.52,2.5){$26$}

\multiput(6.85,0)(1.8,0){3}{\circle{0.7}}
\put(6.65,-0.15){$16$}
\put(8.45,-0.15){$24$}
\put(10.25,-0.15){$25$}
\multiput(7.2,0)(1.8,0){2}{\vector(2,0){1.1}}
\put(8.65,-0.35){\oval(3.6,0.8)[b]}
\put(6.86,-0.45){\vector(0,1){0.1}}
\multiput(6.85,1.3)(1.8,0){3}{\circle{0.7}}
\multiput(6.85,0.95)(1.8,0){3}{\vector(0,-1){0.6}}
\put(6.75,1.2){$4$}
\put(8.45,1.2){$13$}
\put(10.35,1.2){$5$}
\multiput(6.4,2.6)(0.9,0){6}{\circle{0.7}}
\multiput(6.4,2.25)(1.8,0){3}{\vector(2,-3){0.4}}
\multiput(7.3,2.25)(1.8,0){3}{\vector(-2,-3){0.4}}
\put(6.3,2.5){$2$}
\put(7.12,2.5){$27$}
\put(7.97,2.5){$10$}
\put(8.87,2.5){$19$}
\put(9.83,2.5){$11$}
\put(10.67,2.5){$18$}

\put(5.55,4){\circle{0.7}}    %\put(12.85,0){\circle{0.7}}
\put(5.45,3.85){$1$}
\put(5.55,3.75){\oval(0.5,0.8)[b]}
\put(5.3,3.65){\vector(0,1){0.1}}
\put(5.55,5.3){\circle{0.7}}
\put(5.55,4.95){\vector(0,-1){0.6}}
\put(5.35,5.2){$28$}
\put(5.1,6.25){\vector(2,-3){0.4}}
\put(6.0,6.25){\vector(-2,-3){0.4}}
\multiput(5.1,6.6)(0.9,0){2}{\circle{0.7}}
\put(4.9,6.5){$12$}
\put(5.8,6.5){$17$}
\end{picture}
\end{center}
\qquad
\caption{The digraph $G(28,2)$}\label{h12}
\end{figure}

We label $G(28,2)$ by $v_{1}=1, v_{2}=28,v_{3}=12,v_{4}=17, v_{5}=7,v_{6}=20,v_{7}=23,v_{8}=6,v_{9}=22,v_{10}=9,v_{11}=8,
v_{12}=21,v_{13}=14,v_{14}=15,v_{15}=3,v_{16}=26, v_{17}=16,v_{18}=24,v_{19}=25,v_{20}=4,v_{21}=13,v_{22}=5,
v_{23}=2,v_{24}=27,v_{25}=10,v_{26}=19,v_{27}=11$ and $v_{28}=18$. Then digraph $G(28,2)$ is given in Figure 2.

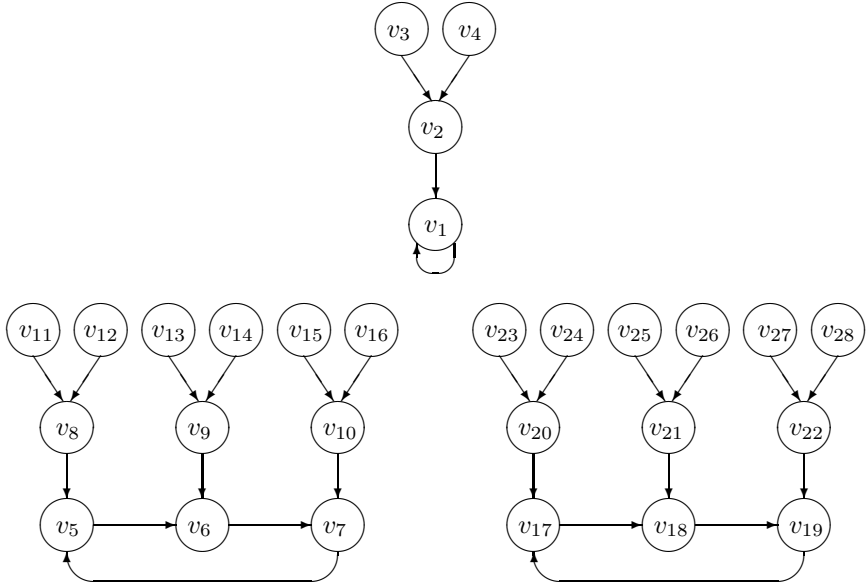
\begin{figure}[h]
\begin{center}
\setlength{\unitlength}{1cm}
\begin{picture}(20,8)

\multiput(0.65,0)(1.8,0){3}{\circle{0.7}}
\put(0.5,-0.1){$v_{5}$}
\put(2.25,-0.1){$v_{6}$}
\put(4.05,-0.1){$v_{7}$}
\multiput(1,0)(1.8,0){2}{\vector(2,0){1.1}}
\put(2.45,-0.35){\oval(3.6,0.8)[b]}
\put(0.65,-0.45){\vector(0,1){0.1}}
\multiput(0.65,1.3)(1.8,0){3}{\circle{0.7}}
\multiput(0.65,0.95)(1.8,0){3}{\vector(0,-1){0.6}}
\put(0.5,1.2){$v_{8}$}
\put(2.25,1.2){$v_{9}$}
\put(4.05,1.2){$v_{10}$}
\multiput(0.2,2.6)(0.9,0){6}{\circle{0.7}}
\multiput(0.2,2.25)(1.8,0){3}{\vector(2,-3){0.4}}
\multiput(1.1,2.25)(1.8,0){3}{\vector(-2,-3){0.4}}
\put(0.01,2.5){$v_{11}$}
\put(0.87,2.5){$v_{12}$}
\put(1.77,2.5){$v_{13}$}
\put(2.67,2.5){$v_{14}$}
\put(3.6,2.5){$v_{15}$}
\put(4.47,2.5){$v_{16}$}

\multiput(6.85,0)(1.8,0){3}{\circle{0.7}}
\put(6.65,-0.1){$v_{17}$}
\put(8.45,-0.1){$v_{18}$}
\put(10.25,-0.1){$v_{19}$}
\multiput(7.2,0)(1.8,0){2}{\vector(2,0){1.1}}
\put(8.65,-0.35){\oval(3.6,0.8)[b]}
\put(6.86,-0.45){\vector(0,1){0.1}}
\multiput(6.85,1.3)(1.8,0){3}{\circle{0.7}}
\multiput(6.85,0.95)(1.8,0){3}{\vector(0,-1){0.6}}
\put(6.65,1.2){$v_{20}$}
\put(8.39,1.2){$v_{21}$}
\put(10.25,1.2){$v_{22}$}
\multiput(6.4,2.6)(0.9,0){6}{\circle{0.7}}
\multiput(6.4,2.25)(1.8,0){3}{\vector(2,-3){0.4}}
\multiput(7.3,2.25)(1.8,0){3}{\vector(-2,-3){0.4}}
\put(6.2,2.5){$v_{23}$}
\put(7.07,2.5){$v_{24}$}
\put(7.97,2.5){$v_{25}$}
\put(8.87,2.5){$v_{26}$}
\put(9.83,2.5){$v_{27}$}
\put(10.67,2.5){$v_{28}$}

\put(5.55,4){\circle{0.7}}  %\put(12.85,0){\circle{0.7}}
\put(5.4,3.9){$v_{1}$}
\put(5.55,3.75){\oval(0.5,0.8)[b]}
\put(5.3,3.65){\vector(0,1){0.1}}
\put(5.55,5.3){\circle{0.7}}
\put(5.55,4.95){\vector(0,-1){0.6}}
\put(5.35,5.2){$v_{2}$}
\put(5.1,6.25){\vector(2,-3){0.4}}
\put(6.0,6.25){\vector(-2,-3){0.4}}
\multiput(5.1,6.6)(0.9,0){2}{\circle{0.7}}
\put(4.9,6.5){$v_{3}$}
\put(5.85,6.5){$v_{4}$}

\end{picture}
\end{center}
\qquad
\caption{The digraph $G(28,2)$}\label{h12}
\end{figure}

If we partition $A(28,2)$ according to the components, then we can get a block diagonal matrix
and the main diagonal blocks are square matrixes. The main diagonal blocks are given as follows.
\begin{equation}
\begin{matrix}
B_{1}=\left(
\begin{smallmatrix}
&v_{1}&v_{2}&v_{3}&v_{4}\\
v_{1}&1 &0 &0 &0   \\
v_{2}&1 &0 &0 &0  \\
v_{3}&0 &1 &0 &0   \\
v_{4}&0 &1 &0 &0
\end{smallmatrix}\right),&
B_{2}=\left(
\begin{smallmatrix}
&v_{5}&v_{6}&v_{7}&v_{8}&v_{9}&v_{10}&v_{11}&v_{12}&v_{13}&v_{14}&v_{15}&v_{16}\\
v_{5}&0 &1 &0 &0 &0 &0 &0 &0 &0 &0 &0 &0   \\
v_{6}&0 &0 &1 &0 &0 &0 &0 &0 &0 &0 &0 &0   \\
v_{7}&1 &0 &0 &0 &0 &0 &0 &0 &0 &0 &0 &0   \\
v_{8}&1 &0 &0 &0 &0 &0 &0 &0 &0 &0 &0 &0   \\
v_{9}&0 &1 &0 &0 &0 &0 &0 &0 &0 &0 &0 &0   \\
v_{10}&0 &0 &1 &0 &0 &0 &0 &0 &0 &0 &0 &0   \\
v_{11}&0 &0 &0 &1 &0 &0 &0 &0 &0 &0 &0 &0    \\
v_{12}&0 &0 &0 &1 &0 &0 &0 &0 &0 &0 &0 &0  \\
v_{13}&0 &0 &0 &0 &1 &0 &0 &0 &0 &0 &0 &0   \\
v_{14}&0 &0 &0 &0 &1 &0 &0 &0 &0 &0 &0 &0   \\
v_{15}&0 &0 &0 &0 &0 &1 &0 &0 &0 &0 &0 &0   \\
v_{16}&0 &0 &0 &0 &0 &1 &0 &0 &0 &0 &0 &0    \\
\end{smallmatrix}\right),
\end{matrix}
\notag
\end{equation}

\begin{equation}
B_{3}=\left(
\begin{smallmatrix}
&v_{17}&v_{18}&v_{19}&v_{20}&v_{21}&v_{22}&v_{23}&v_{24}&v_{25}&v_{26}&v_{27}&v_{28}\\
v_{17} &0 &1 &0 &0 &0 &0 &0 &0 &0 &0 &0 &0 \\
v_{18} &0 &0 &1 &0 &0 &0 &0 &0 &0 &0 &0 &0  \\
v_{19} &1 &0 &0 &0 &0 &0 &0 &0 &0 &0 &0 &0  \\
v_{20} &1 &0 &0 &0 &0 &0 &0 &0 &0 &0 &0 &0  \\
v_{21} &0 &1 &0 &0 &0 &0 &0 &0 &0 &0 &0 &0   \\
v_{22} &0 &0 &1 &0 &0 &0 &0 &0 &0 &0 &0 &0  \\
v_{23} &0 &0 &0 &1 &0 &0 &0 &0 &0 &0 &0 &0  \\
v_{24} &0 &0 &0 &1 &0 &0 &0 &0 &0 &0 &0 &0  \\
v_{25} &0 &0 &0 &0 &1 &0 &0 &0 &0 &0 &0 &0  \\
v_{26} &0 &0 &0 &0 &1 &0 &0 &0 &0 &0 &0 &0   \\
v_{27} &0 &0 &0 &0 &0 &1 &0 &0 &0 &0 &0 &0  \\
v_{28} &0 &0 &0 &0 &0 &1 &0 &0 &0 &0 &0 &0  \\
\end{smallmatrix}\right).
\notag
\end{equation}

Since $B_{2}$ and $B_{3}$ correspond to isomorphic components, $B_{2}=B_{3}$.
If we partition each main diagonal block according to the heights, then we can get a block lower triangular matrix
and its main diagonal blocks are square matrixes, its main diagonal blocks are all equal to $0$ except the $(1,1)$-block.
After partitioning, $B_{1},B_{2}$ and $B_{3}$ have the following form.
\begin{equation}
B_{i}=\begin{pmatrix}
B_{i0} &0 &0 \\
B_{i1} &0 &0 \\
0 &B_{i2} &0
\end{pmatrix},i=1,2,3.
\notag
\end{equation}

We denote the characteristic polynomial and minimal polynomial of a matrix $A$ by $f_{A}(\lambda)$ and $m_{A}(\lambda)$ respectively.
Notice that the characteristic polynomial and minimal polynomial of a $r\times r$ matrix with the following form
\begin{equation}\label{matrix1}
\begin{pmatrix}
0 &1 & \\
 &\ddots &\ddots \\
 & &\ddots&1\\
1 & &&0
\end{pmatrix}
\end{equation}
are both $\lambda^{r}-1$.
Hence, $f_{B_{1}}(\lambda)=\lambda^{3}(\lambda-1)$ and
$f_{B_{2}}(\lambda)=f_{B_{3}}(\lambda)=\lambda^{9}(\lambda^{3}-1)$.

\begin{lem}\label{minimal}
If a partitioned matrix $D$ has the following form
\begin{equation}
D=
\begin{pmatrix}
D_{0} & & \\
D_{1} &0 & \\
 &\ddots &\ddots&\\
 & &D_{m}&0
\end{pmatrix},
\notag
\end{equation}
where the main diagonal blocks are all square matrixes, $D_{0}$ is a $r\times r$ matrix with the form as $(\ref{matrix1})$, each $D_{i}$ $(1\le i\le m)$ is non-negative and its $(1,1)$-entry is positive. Then $m_{D}(\lambda)=\lambda^{m}(\lambda^{r}-1)$.
\end{lem}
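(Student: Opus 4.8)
The plan is to exploit the block lower-bidiagonal structure of $D$ and to track how $D$ shuffles vectors between blocks. Write the ambient space as $V = V_0 \oplus V_1 \oplus \cdots \oplus V_m$ according to the block partition, with $V_0 = \mathbb{C}^r$ carrying $D_0$. Reading off the block rows, $D$ acts by $(Dv)_0 = D_0 v_0$ and $(Dv)_i = D_i v_{i-1}$ for $1 \le i \le m$; in particular $D$ sends a vector supported in block $j \ge 1$ into block $j+1$ and annihilates anything supported in block $m$. A routine induction then gives, for a vector $v$ supported in block $0$ and any $p \ge 0$, the formula $(D^p v)_i = D_i D_{i-1} \cdots D_1 D_0^{p-i} v_0$ for $0 \le i \le \min(p,m)$ and $(D^p v)_i = 0$ otherwise, since the only block-path from $0$ to $i$ loiters at block $0$ (via the $D_0$-loop) before descending monotonically.

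First I would verify that $g(\lambda) := \lambda^m(\lambda^r - 1)$ annihilates $D$, i.e. $D^{m+r} = D^m$. Any vector supported in a block $j \ge 1$ is killed by $D^m$ (it reaches block $m$ and dies within $m - j + 1 \le m$ steps), hence also by $D^{m+r}$. For $v$ supported in block $0$, the displayed formula together with $D_0^r = I$ (recall $D_0$ has the form $(\ref{matrix1})$, whose minimal polynomial is $\lambda^r-1$) gives $D_0^{(m+r)-i} = D_0^{m-i}$ for $0 \le i \le m$, so $(D^{m+r}v)_i = (D^m v)_i$. Thus $g(D) = 0$ and $m_D(\lambda) \mid \lambda^m(\lambda^r - 1)$.

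Next I would pin down the two factors. The subspace $W = V_1 \oplus \cdots \oplus V_m$ is $D$-invariant, and the map induced on $V/W \cong V_0$ is exactly $D_0$, whose minimal polynomial is $\lambda^r - 1$; since the minimal polynomial of an induced map divides $m_D$, we obtain $(\lambda^r - 1) \mid m_D$. As $\lambda^r - 1$ and $\lambda^m$ are coprime, this forces $m_D(\lambda) = \lambda^a(\lambda^r - 1)$ with $0 \le a \le m$, and it remains to show $a = m$, equivalently that $\lambda^{m-1}(\lambda^r - 1)$ does not annihilate $D$. Taking $v$ supported in block $0$, the block-$m$ component of $D^{m-1}(D^r - I)v = D^{m+r-1}v - D^{m-1}v$ equals $D_m D_{m-1}\cdots D_1 D_0^{r-1} v_0$, the second term contributing nothing since the front reaches only block $m-1$ after $m-1$ steps.

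The crux, and the one place where the hypotheses on the $D_i$ are indispensable, is to make this block-$m$ component nonzero. Here I would use non-negativity: since each $D_i$ ($1 \le i \le m$) is entrywise non-negative with positive $(1,1)$-entry, there is no cancellation in the product $D_m \cdots D_1$, and the diagonal path forces its $(1,1)$-entry to be at least $\prod_{i} (D_i)_{11} > 0$, so $D_m \cdots D_1 \ne 0$. Because $D_0^{r-1}$ is invertible I can choose $v_0$ with $D_0^{r-1} v_0 = e_1$, making the block-$m$ component equal to the nonzero first column $D_m \cdots D_1 e_1$. Hence $\lambda^{m-1}(\lambda^r-1)$ does not kill $D$, so $a = m$ and $m_D(\lambda) = \lambda^m(\lambda^r - 1)$. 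The main difficulty is precisely this non-cancellation argument: over an arbitrary signed matrix the product $D_m \cdots D_1$ could vanish, so the positivity assumption is exactly what prevents the nilpotent index from dropping below $m$.
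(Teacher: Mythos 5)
Your proof is correct and takes essentially the same route as the paper's: both verify $D^{m}(D^{r}-I)=0$ by computing the block powers of $D$, and both show $D^{m-1}(D^{r}-I)\neq 0$ by extracting the bottom block component $D_{m}D_{m-1}\cdots D_{1}D_{0}^{r-1}$ and using non-negativity together with the positive $(1,1)$-entries to prevent cancellation. The only differences are cosmetic: you work vector-wise and justify $(\lambda^{r}-1)\mid m_{D}(\lambda)$ via the map induced on the quotient by $V_{1}\oplus\cdots\oplus V_{m}$ (a point the paper declares obvious), and you choose $v_{0}$ with $D_{0}^{r-1}v_{0}=e_{1}$ where the paper instead notes a positive entry in the first row of $D_{0}^{r-1}$.
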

\begin{proof}
It is obvious that $\lambda^{r}-1|m_{D}(\lambda)$ and the first row of the partitioned matrix $D^{r}-I$ is zero,
where $I$ is the identity matrix.

Since
\begin{equation}
D^{m}=
\begin{pmatrix}
D_{0}^{m}&0&\cdots&0 \\
D_{1}D_{0}^{m-1} & 0&\cdots&0 \\
D_{2}D_{1}D_{0}^{m-2} &0&\cdots&0  \\
 \vdots& \vdots&&\vdots\\
D_{m}D_{m-1}\times\cdots \times D_{1}&0&\cdots&0
\end{pmatrix},
\notag
\end{equation}
$D^{m}(D^{r}-I)=0$.

Since
\begin{equation}
D^{m-1}=
\begin{pmatrix}
D_{0}^{m-1}&0 &\cdots & 0\\
D_{1}D_{0}^{m-2} & 0&\cdots& 0\\
\vdots& \vdots&&\vdots\\
D_{m-2}D_{m-3}\times \cdots\times D_{0} &0 &\cdots&0\\
D_{m-1}D_{m-2}\times\cdots \times D_{1}&0&\cdots&0\\
0&D_{m}D_{m-1}\times\cdots \times D_{2}&\cdots&0\\
\end{pmatrix},
\notag
\end{equation}
the $(m+1,1)$-entry of the partitioned matrix $D^{m-1}(D^{r}-I)$ is $D_{m}D_{m-1}\times\cdots \times D_{2}\times D_{1}D_{0}^{r-1}$.
Since $D_{0}^{r-1}$ is invertible and non-negative, there exists a positive entry in the first row of $D_{0}^{r-1}$.
Notice that $D_{m}D_{m-1}\times\cdots \times D_{2}D_{1}$ is non-negative and its $(1,1)$-entry is positive. Hence,
$D_{m}D_{m-1}\times\cdots \times D_{2}\times D_{1}D_{0}^{r-1}\ne 0$. So $D^{m-1}(D^{r}-I)\ne 0$.

Hence, we have $m_{D}(\lambda)=\lambda^{m}(\lambda^{r}-1)$.
\end{proof}

So by Lemma \ref{minimal}, $m_{B_{1}}(\lambda)=\lambda^{2}(\lambda-1)$ and $m_{B_{2}}(\lambda)=m_{B_{3}}(\lambda)=\lambda^{2}(\lambda^{3}-1)$.

Recall that $h_{0}$ is the height of the trees. Let $C$ be a component of $G(n,k)$
and the unique cycle in $C$ has length $r$, then $C$ has $rw$ vertices,
the characteristic polynomial and minimal polynomial of $C$ is $\lambda^{rw-r}(\lambda^{r}-1)$ and $\lambda^{h_{0}}(\lambda^{r}-1)$
respectively.

Suppose the components of $G(n,k)$ consist of $m_{1}$ copies of $G_{1}$, $m_{2}$ copies of $G_{2}$, $\cdots$,
$m_{s}$ copies of $G_{s}$, where $G_{1}, G_{2},\cdots, G_{s}$ are pairwise non-isomorphic, the unique cycle in each $G_{i}$ $(1\le i\le s)$
has length $r_{i}$.
Then we get the following theorem.
\begin{thm}
$(1)$ The characteristic polynomial of $G(n,k)$ is $\prod\limits_{i=1}^{s}\big[\lambda^{r_{i}w-r_{i}}(\lambda^{r_{i}}-1)\big]^{m_{i}}$.

$(2)$ The minimal polynomial of $G(n,k)$ is $\lambda^{h_{0}}(\lambda^{\ell(t)}-1)$.
\end{thm}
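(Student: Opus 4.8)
The plan is to leverage the two nested block structures of $A(n,k)$ together with the two per-component formulas recorded just before the statement. For part $(1)$, since labelling the vertices component by component makes $A(n,k)$ block diagonal, its characteristic polynomial is the product of the characteristic polynomials of the diagonal blocks. A block corresponding to a component isomorphic to $G_i$ (whose cycle has length $r_i$, hence $r_iw$ vertices) contributes $\lambda^{r_iw-r_i}(\lambda^{r_i}-1)$ by the computation already established. Gathering the $m_i$ copies of each $G_i$ then yields $\prod_{i=1}^s[\lambda^{r_iw-r_i}(\lambda^{r_i}-1)]^{m_i}$ at once.

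For part $(2)$ I would use that the minimal polynomial of a block diagonal matrix is the least common multiple of the minimal polynomials of its blocks. Since each $G_i$ has minimal polynomial $\lambda^{h_0}(\lambda^{r_i}-1)$ (the exponent $h_0$ being common to all trees), and since $\lambda^{r_i}-1$ is coprime to $\lambda$, this gives $m_{A(n,k)}(\lambda)=\operatorname{lcm}_i\big(\lambda^{h_0}(\lambda^{r_i}-1)\big)=\lambda^{h_0}\cdot\operatorname{lcm}_i(\lambda^{r_i}-1)$. The task thus reduces to identifying $\operatorname{lcm}_i(\lambda^{r_i}-1)$ with $\lambda^{\ell(t)}-1$.

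To carry this out I would first recall the identity $\operatorname{lcm}(\lambda^a-1,\lambda^b-1)=\lambda^{[a,b]}-1$, which follows from the factorization $\lambda^m-1=\prod_{d\mid m}\Phi_d(\lambda)$ into distinct irreducible cyclotomic polynomials: the left-hand side is the product of those $\Phi_d$ with $d\mid a$ or $d\mid b$, that is with $d\mid[a,b]$. Iterating over $i$ gives $\operatorname{lcm}_i(\lambda^{r_i}-1)=\lambda^{[r_1,\dots,r_s]}-1$, so everything comes down to computing the least common multiple of all cycle lengths. The distinct cycle lengths are exactly $\{\ell(d):d\mid t\}$, because for each $d\mid t$ there are $\varphi(d)/\ell(d)\ge1$ cycles of order $d$ by Proposition \ref{cycle}$(3)$. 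For any $d\mid t$, from $t\mid(k^{\ell(t)}-1)$ we get $d\mid(k^{\ell(t)}-1)$, whence $\ell(d)={\rm ord}_dk\mid\ell(t)$; thus $\ell(t)$ is divisible by every cycle length while itself being one, so $[r_1,\dots,r_s]=\ell(t)$ and $m_{A(n,k)}(\lambda)=\lambda^{h_0}(\lambda^{\ell(t)}-1)$.

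The only genuine inputs are the two per-component polynomials (themselves consequences of Lemma \ref{minimal} and the shape of the diagonal blocks) and the elementary collapse of an lcm of the polynomials $\lambda^{r_i}-1$ to a single such polynomial. The one step needing care is the very last: one must show that the least common multiple of the cycle lengths equals the longest length $\ell(t)$ and not merely some multiple of it, and this is exactly where the divisibility $\ell(d)\mid\ell(t)$ for $d\mid t$ enters.
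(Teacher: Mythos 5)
Your proposal is correct and follows essentially the same route as the paper: part $(1)$ is the product of characteristic polynomials over the diagonal blocks, and part $(2)$ is the least common multiple of the per-component minimal polynomials $\lambda^{h_0}(\lambda^{r_i}-1)$, resolved by the divisibility $\ell(d)\mid\ell(t)$ for $d\mid t$ (the paper cites Propositions \ref{vertex} and \ref{cycle}$(1)$ for exactly this). You merely make explicit two steps the paper leaves implicit — the cyclotomic identity $\operatorname{lcm}(\lambda^a-1,\lambda^b-1)=\lambda^{[a,b]}-1$ and the fact that $\ell(t)$ is itself a realized cycle length — which is a fair elaboration rather than a different argument.
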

\begin{proof}
The result in $(1)$ is obvious.

By Proposition \ref{vertex} and Proposition \ref{cycle} $(1)$, the length of each cycle divides $\ell(t)$, this yields the result in $(2)$.
\end{proof}

By the discussions in section 2 and section 3, if we specify the values of $n$ and $k$, we can calculate explicitly these data $s,t,w,h_{0}, \ell(t),
m_{i}$ and $r_{i}$ $(1\le i\le s)$.

Since we have determined the characteristic polynomial of $G(n,k)$, it is easy to get the eigenvalues and spectrum of $G(n,k)$.

\section{The Automorphism Group of $G(n,k)$}
For any graph $G$, we denote its automorphism group by Aut$(G)$.
For simplicity, we denote the automorphism group of $G(n,k)$ by Aut$(n,k)$.
Notice that Aut$(G)$ is a permutation group on $\{1,2,\cdots,|G|\}$.

Let $S_{m}$ be the symmetric group on $\{1,2,\cdots,m\}$.
Let $P_{1}$ and $P_{2}$ be two permutation groups on $\{1,2,\cdots, m\}$ and $\{1,2,\cdots, r\}$ respectively.
Recall that the wreath product $P_{1}\wr P_{2}$ is generated by the direct product of $r$ copies of $P_{1}$,
together with the elements of $P_{2}$ acting on these $r$ copies of $P_{1}$.

Using the notations in the above section, we get the following theorem.
\begin{thm}
{\rm Aut}$(n,k)\cong ({\rm Aut}(G_{1})\wr S_{m_{1}})\times ({\rm Aut}(G_{2})\wr S_{m_{2}})\times \cdots \times ({\rm Aut}(G_{s})\wr S_{m_{s}})$.
\end{thm}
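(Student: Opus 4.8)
The plan is to reduce the computation of ${\rm Aut}(n,k)$ to two standard facts about automorphisms of disjoint unions of graphs: that an automorphism must permute the connected components among mutually isomorphic ones, and that the automorphism group of a disjoint union of $m$ copies of a single connected digraph is the wreath product of its automorphism group with $S_{m}$. Throughout I treat $G(n,k)$ as the disjoint union of its (weakly) connected components, which are well-defined and each carry a unique cycle by Lemma \ref{basic}.

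First I would show that every $\sigma\in{\rm Aut}(n,k)$ induces a permutation of the components. Since $\sigma$ preserves the directed edges, it preserves the underlying undirected adjacency, hence the relation of two vertices lying in a common component; thus $\sigma$ maps each component onto a component, and its restriction to a component $C$ is an isomorphism of $C$ onto $\sigma(C)$. In particular $\sigma$ can send a component only to an isomorphic one, so it preserves the partition of components into the isomorphism classes recorded by $G_{1},\dots,G_{s}$, permuting the $m_{i}$ copies of $G_{i}$ among themselves for each $i$. Because distinct components share no edges, $\sigma$ is determined by, and its restrictions may be chosen independently as, its actions on the union of each isomorphism class; this produces the direct-product decomposition over $i=1,\dots,s$ and reduces the theorem to a single class.

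So I fix $i$ and study the union of the $m_{i}$ copies $C_{1},\dots,C_{m_{i}}$ of $G_{i}$, fixing once and for all an identification of each $C_{j}$ with $G_{i}$. An automorphism $\sigma$ of this union determines a permutation $\pi\in S_{m_{i}}$ describing its action on the set of copies, together with, for each $j$, an isomorphism $C_{j}\to C_{\pi(j)}$ that becomes an element $\alpha_{j}\in{\rm Aut}(G_{i})$ after applying the fixed identifications. Sending $\sigma$ to the pair $\big((\alpha_{1},\dots,\alpha_{m_{i}}),\pi\big)$ defines a map into ${\rm Aut}(G_{i})\wr S_{m_{i}}$; conversely every such pair reassembles into an automorphism, so the map is a bijection.

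It remains to check that this bijection respects the group law, and this is the step I expect to be the \emph{main obstacle}. Composing two automorphisms, one verifies that the permutation parts multiply in $S_{m_{i}}$ while the tuples in ${\rm Aut}(G_{i})^{m_{i}}$ multiply with the index-shift dictated by $\pi$; matching this bookkeeping to the multiplication recalled in the definition of the wreath product is where the care is needed, the connectivity and isomorphism-class arguments of the previous paragraphs being routine by comparison. Taking the direct product of the resulting isomorphisms over $i=1,\dots,s$ then yields the stated description of ${\rm Aut}(n,k)$.
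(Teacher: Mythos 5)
Your argument is correct, but it takes a genuinely different route from the paper: the paper proves this theorem by a one-line citation of Theorem 1.1 in \cite{C}, which is exactly the general statement that the automorphism group of a finite (di)graph decomposes, over the isomorphism classes of its connected components, as the direct product of the wreath products ${\rm Aut}(G_{i})\wr S_{m_{i}}$. What you have written is, in effect, a self-contained proof of that cited result: automorphisms preserve the component partition and the isomorphism type of each component, the restrictions to distinct isomorphism classes can be prescribed independently (giving the direct product), and within one class a fixed identification of the $m_{i}$ copies with $G_{i}$ converts each automorphism into a pair $\bigl((\alpha_{1},\dots,\alpha_{m_{i}}),\pi\bigr)$. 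You rightly single out the only delicate point, namely that composition matches the wreath-product multiplication; be aware that with your indexing convention, where $\alpha_{j}$ is attached to the \emph{source} copy $C_{j}$, one of the two standard conventions for the wreath product makes your map an anti-homomorphism, so you may need to index the tuple by target copies or compose in the opposite order --- a bookkeeping adjustment, not a gap. Your approach buys an elementary argument that makes the paper self-contained and makes visible that nothing about $G(n,k)$ is used beyond its being a finite digraph with well-defined weakly connected components; the paper's citation buys brevity, deferring the identical bookkeeping to Cameron.
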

\begin{proof}
See Theorem 1.1 in \cite{C}.
\end{proof}

For each component $G_{i}$ $(1\le i \le s)$, its unique cycle has length $r_{i}$, by Corollary \ref{iso}, we have
the following proposition.
\begin{prop}
For each $1\le i \le s$, ${\rm Aut}(G_{i})\cong {\rm Aut}(T_{1})\wr <\sigma_{i}>$, where $\sigma_{i}$ is a $r_{i}$-cycle,
\begin{equation}
\sigma_{i}=
\begin{pmatrix}
1&2&3&\cdots &r_{i}\\
2&3&4&\cdots &1
\end{pmatrix}.
\notag
\end{equation}
\end{prop}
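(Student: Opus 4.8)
The plan is to describe the automorphism group of a single component $G_i$ by understanding how its cycle structure interacts with the isomorphic trees hanging off each cycle vertex. By Corollary~\ref{iso}, each cycle vertex $c$ of $G_i$ carries a tree $T_c$ with $T_c \cong T_1$, and there are exactly $r_i$ such cycle vertices arranged in a directed cycle of length $r_i$. So $G_i$ is, combinatorially, a directed $r_i$-cycle with a copy of the rooted tree $T_1$ attached (via its root) at each vertex of the cycle. The goal is to show that the automorphism group of this configuration is the wreath product ${\rm Aut}(T_1) \wr \langle \sigma_i \rangle$.

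First I would establish that any automorphism $\psi$ of $G_i$ must preserve the set of cycle vertices and act on them as a graph automorphism of the directed $r_i$-cycle. This follows from the intrinsic characterization of cycle vertices (they are exactly the vertices lying on a closed directed path, equivalently the vertices of nonzero height $0$ in the terminology of Section~3), which is preserved by any digraph isomorphism. Since a directed cycle of length $r_i$ has automorphism group the cyclic group $\langle \sigma_i \rangle$ generated by the single shift, $\psi$ induces a rotation $\sigma_i^{j}$ on the cycle. Next I would observe that once the action on the cycle is fixed, $\psi$ must carry the tree $T_c$ rooted at a cycle vertex $c$ isomorphically onto the tree rooted at the image cycle vertex $\psi(c)$, because an automorphism preserves heights and the child relation, hence maps the forest hanging off $c$ onto the forest hanging off $\psi(c)$. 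This gives the ``base group'' of the wreath product: for each cycle vertex we are free to choose a tree automorphism, and these choices are permuted according to the rotation.

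The main construction is then to exhibit the map ${\rm Aut}(G_i) \to \langle \sigma_i \rangle$ sending $\psi$ to its induced cycle rotation, show it is a surjective homomorphism with kernel the direct product $\prod_{c} {\rm Aut}(T_c) \cong {\rm Aut}(T_1)^{r_i}$ (the automorphisms fixing every cycle vertex, which act independently on each rooted tree), and verify that the short exact sequence splits via the natural geometric rotation of the whole component. Surjectivity requires checking that the rigid rotation of $G_i$ by $\sigma_i$ is genuinely a digraph automorphism, which uses that all trees $T_c$ are isomorphic (Corollary~\ref{iso}) together with the explicit isomorphisms $g_h$ from Theorem~\ref{iso1} to transport the tree structure consistently around the cycle. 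Recognizing the resulting semidirect product as the wreath product ${\rm Aut}(T_1) \wr \langle \sigma_i \rangle$ is then immediate from the definition recalled before the theorem.

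The hard part will be verifying that the cycle-rotation homomorphism splits compatibly, i.e. that one can choose, simultaneously, a consistent family of isomorphisms among the trees $T_c$ so that the shift is realized as an honest automorphism rather than merely an abstract action. Here the explicit formula $g_h(a) = a c_h$ from the proof of Theorem~\ref{iso1}, with $c_h^{k} = c_{h-1}$, is exactly what provides the coherent identification needed, since multiplication by a fixed cycle vertex is compatible with the child relation $a \mapsto a^k$; this reduces the splitting to the group-theoretic fact that the cycle vertices form a cyclic group (Lemma~\ref{product}, Proposition~\ref{vertex}). Once that compatibility is in hand, the wreath-product description follows formally.
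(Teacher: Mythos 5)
Your proposal is correct and follows essentially the same route as the paper, whose proof is a single line --- it simply observes, with Corollary \ref{iso} in hand, that the automorphism group of the directed cycle in $G_i$ is exactly the group generated by $\sigma_i$, leaving the wreath-product identification implicit. Your expanded version (the homomorphism ${\rm Aut}(G_i)\to\langle\sigma_i\rangle$ with kernel $\prod_c{\rm Aut}(T_c)\cong{\rm Aut}(T_1)^{r_i}$, split by a rigid rotation assembled from the maps $g_h$ of Theorem \ref{iso1} using $c_h^k=c_{h-1}$) correctly supplies precisely the details the paper suppresses, including the one genuinely delicate point --- that the cycle shift lifts to an honest digraph automorphism of the whole component.
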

\begin{proof}
Notice that the automorphism group of the cycle in $G_{i}$ is exactly the permutation group generated by $\sigma_{i}$.
\end{proof}

Hence, we only need to determine ${\rm Aut}(T_{1})$. If $(n,k)=1$, by Proposition \ref{regular},
we have ${\rm Aut}(T_{1})=\{1\}$. Then we get the following proposition.
\begin{prop}\label{auto1}
If $(n,k)=1$, then {\rm Aut}$(n,k)=(<\sigma_{1}>\wr S_{m_{1}})\times (<\sigma_{2}>\wr S_{m_{2}})\times \cdots \times (<\sigma_{s}>\wr S_{m_{s}})$.
\end{prop}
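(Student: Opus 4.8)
The plan is to combine the two immediately preceding results mechanically. Proposition~5.4 gives the general decomposition
\[
{\rm Aut}(n,k)\cong \prod_{i=1}^{s}\bigl({\rm Aut}(G_{i})\wr S_{m_{i}}\bigr),
\]
and Proposition~5.5 expresses each factor as ${\rm Aut}(G_{i})\cong {\rm Aut}(T_{1})\wr \langle\sigma_{i}\rangle$, where $\sigma_{i}$ is the $r_{i}$-cycle generating the automorphism group of the unique cycle in $G_{i}$. So the whole argument reduces to computing ${\rm Aut}(T_{1})$ under the hypothesis $(n,k)=1$ and substituting.

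First I would invoke the hypothesis: when $(n,k)=1$, Proposition~\ref{regular} tells us that $f$ is an automorphism, equivalently every component of $G(n,k)$ is a cycle, equivalently $G(n,k)$ is regular of degree $1$. In particular there are no non-cycle vertices, so every tree $T_{1}$ consists of a single vertex and its height is $h_{0}=0$. A one-vertex tree has trivial automorphism group, hence ${\rm Aut}(T_{1})=\{1\}$, exactly as the paragraph preceding the statement already notes.

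Next I would substitute ${\rm Aut}(T_{1})=\{1\}$ into Proposition~5.5. Since the wreath product of the trivial group with $\langle\sigma_{i}\rangle$ collapses to $\langle\sigma_{i}\rangle$ itself (the direct product of $r_{i}$ copies of the trivial group is trivial, leaving only the top group acting), we get ${\rm Aut}(G_{i})\cong \langle\sigma_{i}\rangle$ for each $i$. Feeding these isomorphisms into the decomposition of Proposition~5.4 then yields
\[
{\rm Aut}(n,k)\cong \bigl(\langle\sigma_{1}\rangle\wr S_{m_{1}}\bigr)\times \cdots \times \bigl(\langle\sigma_{s}\rangle\wr S_{m_{s}}\bigr),
\]
which is precisely the claimed formula.

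I do not expect any genuine obstacle here, since the statement is a direct specialization of two results that have already been established. The only point requiring a word of care is the collapse of the wreath product ${\rm Aut}(T_{1})\wr\langle\sigma_{i}\rangle$ to $\langle\sigma_{i}\rangle$ when the base group is trivial; this follows immediately from the definition of the wreath product recalled at the start of the section, as the base is the direct product of $r_{i}$ copies of ${\rm Aut}(T_{1})=\{1\}$. Thus the proof is essentially a one-line citation of Propositions~5.4 and~5.5 together with the trivial-base observation.
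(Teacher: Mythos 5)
Your proposal is correct and follows essentially the same route as the paper, which likewise notes that $(n,k)=1$ forces every component to be a cycle (Proposition \ref{regular}), hence ${\rm Aut}(T_{1})=\{1\}$, and then substitutes this into the wreath-product decomposition ${\rm Aut}(n,k)\cong\prod_{i}({\rm Aut}(G_{i})\wr S_{m_{i}})$ and the identification ${\rm Aut}(G_{i})\cong {\rm Aut}(T_{1})\wr\langle\sigma_{i}\rangle$. The only slip is your numbering of the two cited results (they are the first theorem and proposition of Section~5, not Propositions~5.4 and~5.5), but the content you invoke is exactly right, including the trivial-base collapse of the wreath product.
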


\begin{prop}
If $k=1$, then {\rm Aut}$(n,k)=S_{n}$.
\end{prop}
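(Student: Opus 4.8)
The plan is to first pin down the structure of $G(n,1)$ and then read off its automorphism group, either directly or through Proposition \ref{auto1}.

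First I would observe that when $k=1$ the underlying endomorphism is $f(x)=x^{1}=x$, the identity map, so there is a directed edge from $a$ to $b$ precisely when $a=b$. Consequently $G(n,1)$ consists of $n$ vertices, each carrying a single loop, with no edges between distinct vertices. This same picture can be recovered from the general theory developed earlier: since $(n,1)=1$ we have $t=n$ and $w=1$, so by Proposition \ref{vertex} every vertex is a cycle vertex, and by Proposition \ref{cycle}(1) every cycle has length $\ell(d)=\mathrm{ord}_{d}1=1$; hence each of the $n$ components is a single loop.

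Next I would compute the automorphism group. The cleanest route is the direct one: an automorphism of $G(n,1)$ is a permutation $\pi$ of the vertex set preserving the arc relation, and since ``$a\to b$'' means ``$a=b$'', for any permutation $\pi$ we have $a=b$ if and only if $\pi(a)=\pi(b)$. Thus every permutation preserves both the loops and the absence of non-loop arcs, so every element of $S_{n}$ is an automorphism and $\mathrm{Aut}(n,1)=S_{n}$. Alternatively, I would invoke Proposition \ref{auto1}, which applies since $(n,1)=1$: here all $n$ components are mutually isomorphic single loops, so there is a single isomorphism class with multiplicity $m_{1}=n$ and each $\sigma_{i}$ is a $1$-cycle; hence $\langle\sigma_{1}\rangle$ is trivial and $\mathrm{Aut}(n,1)=\{1\}\wr S_{n}=S_{n}$.

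There is no genuine obstacle in this statement; the only point requiring a moment of care is the verification that a loop-only digraph is preserved by an arbitrary vertex permutation — equivalently, that the single-loop component has trivial automorphism group, so that the wreath product in Proposition \ref{auto1} collapses to $S_{n}$.
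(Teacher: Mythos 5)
Your proof is correct, and since the paper states this proposition without proof (evidently regarding it as immediate), your direct argument --- $k=1$ makes $f$ the identity, so $G(n,1)$ is $n$ disjoint loops and every vertex permutation is an automorphism --- is exactly the intended one; the alternative derivation via Proposition \ref{auto1} with $m_{1}=n$ and trivial $\langle\sigma_{1}\rangle$ is a nice consistency check but not needed.
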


\begin{prop}
If $k=n$, then {\rm Aut}$(n,k)=S_{n-1}$.
\end{prop}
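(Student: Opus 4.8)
The plan is to first determine the (very simple) shape of $G(n,n)$ and then read off its automorphism group directly. Since $H$ has order $n$, Lagrange's theorem gives $x^{n}=1$ for every $x\in H$, so the endomorphism $f(x)=x^{n}$ is the constant map sending every vertex to the identity $1$. Hence $G(n,n)$ consists of a single loop at $1$ together with one directed edge from each of the remaining $n-1$ vertices to $1$; it is the star digraph centered at $1$.

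Next I would show that every automorphism fixes the center $1$, which is the one step that genuinely needs an argument. With $d=(n,n)=n$, Proposition \ref{indegree} gives that exactly $\frac{d-1}{d}n=n-1$ vertices have indegree $0$, so $1$ is the unique vertex of nonzero indegree. (Equivalently, the largest divisor of $n$ coprime to $n$ is $t=1$, so by the corollary to Proposition \ref{vertex} there is a single cycle vertex, namely $1$, which carries the loop.) Since a digraph automorphism preserves indegrees and loops, it must fix $1$.

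Finally, the $n-1$ non-identity vertices are mutually indistinguishable: each has indegree $0$, outdegree $1$, and its unique outgoing edge terminates at the fixed vertex $1$. Therefore an arbitrary permutation of these $n-1$ vertices preserves every edge and is an automorphism, while conversely each automorphism restricts to such a permutation. This identifies ${\rm Aut}(n,n)$ with the full symmetric group on the $n-1$ non-identity vertices, i.e. ${\rm Aut}(n,n)\cong S_{n-1}$.

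I expect no serious obstacle, the delicate point being only the second step above. As an independent cross-check one may instead run the general machinery of Sections 2--4: here $t=1$ and $w=n$, the graph is a single component (Proposition \ref{connected}) whose unique cycle has length $r_{1}=1$, so $\sigma_{1}$ is trivial and ${\rm Aut}(n,n)\cong{\rm Aut}(T_{1})\wr\langle\sigma_{1}\rangle={\rm Aut}(T_{1})$. Since $|T_{1}|=w=n$ by Proposition \ref{number}$(1)$ and $h_{0}=1$ by Proposition \ref{height1}, $T_{1}$ is the star on $n$ vertices, whence ${\rm Aut}(T_{1})=S_{n-1}$ and the two approaches agree.
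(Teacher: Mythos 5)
Your proposal is correct, and it matches what the paper intends: the paper states this proposition without proof, evidently regarding it as immediate, and your argument --- $x^{n}=1$ for all $x$, so $G(n,n)$ is the star digraph with a loop at $1$, whose center is fixed by every automorphism (unique vertex of nonzero indegree, equivalently the unique cycle vertex since $t=1$), leaving the $n-1$ leaves freely permutable --- is precisely the justification being omitted. Your cross-check via the general machinery ($t=1$, $w=n$, $h_{0}=1$, trivial $\sigma_{1}$, so ${\rm Aut}(n,n)\cong{\rm Aut}(T_{1})=S_{n-1}$) is also sound and consistent with the direct computation.
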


But in general it is difficult to determine ${\rm Aut}(T_{1})$.
Since the vertices with the same height may have different number of children. For example,
let $H=(\mathbb{Z}/41\mathbb{Z})^{*}$ and $k=4$, $T_{1}$ is given as follows.
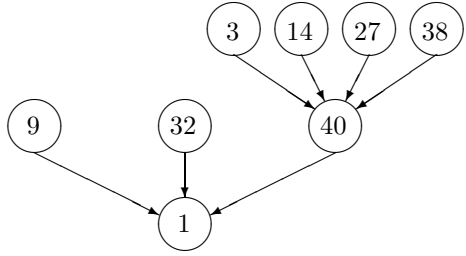
\begin{figure}[h]
\begin{center}
\setlength{\unitlength}{1cm}
\begin{picture}(20,3)
\put(4.85,0){\circle{0.7}}
\put(4.75,-0.1){$1$}
\multiput(2.85,1.3)(2,0){3}{\circle{0.7}}
\put(2.75,1.2){$9$}
\put(4.65,1.2){$32$}
\put(6.65,1.2){$40$}
\put(2.85, 0.95){\vector(2,-1){1.66}}
\put(4.85, 0.95){\vector(0,-1){0.6}}
\put(6.85, 0.95){\vector(-2,-1){1.66}}
\multiput(5.5,2.6)(0.9,0){4}{\circle{0.7}}
\put(5.4,2.45){$3$}
\put(6.2,2.45){$14$}
\put(7.1,2.45){$27$}
\put(8.0,2.45){$38$}
\put(5.5, 2.25){\vector(3,-2){1.075}}
\put(8.2, 2.25){\vector(-3,-2){1.075}}
\put(6.4, 2.25){\vector(1,-2){0.31}}
\put(7.3, 2.25){\vector(-1,-2){0.31}}
\end{picture}
\end{center}
\caption{The tree $T_{1}$ for $n=40$ and $k=4$}\label{h12}
\end{figure}

Recall that if $h_{0}$ is the height of $T_{1}$, then the vertices of $T_{1}$ form the subgroup $H_{k^{h_{0}}}$,
that is $H_{w}$. So ${\rm Aut}(H_{w})\subseteq {\rm Aut}(T_{1})$.

As follows we want to determine ${\rm Aut}(T_{1})$ when $k$ is a prime.

For any two vertices $a$ and $b$, if there is a $g\in {\rm Aut}(n,k)$ such that $g(a)=b$, we say $a$ is isomorphic to $b$,
denote it by $a\cong b$.
This is an equivalent relation in $G(n,k)$. We will show that if ${\rm ord}(a)={\rm ord}(b)$, then $a\cong b$.

Suppose that $M$ is a cyclic group with $m$ elements. Given three positive integers $r$, $r_{1}$ and $q$ such that $r|r_{1}|m$
and $r_{1}=rq$.  For any $b\in M, {\rm ord}(b)=r$, put $M_{b}=\{a\in M|a^{q}=b,{\rm ord}(a)=r_{1}\}$.
Then we have the following lemma.

\begin{lem}\label{solution2}
For any $b\in M$ with ${\rm ord}(b)=r$, $|M_{b}|=\frac{\varphi(r_{1})}{\varphi(r)}$.
\end{lem}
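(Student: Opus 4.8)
The plan is to reduce everything to the unique subgroup of $M$ of order $r_1$ and then exploit the automorphisms of that subgroup. First I would note that any $a$ with ${\rm ord}(a)=r_1$ lies in the unique cyclic subgroup $N\le M$ of order $r_1$, and that $b$, having order $r\mid r_1$, lies in the unique subgroup $R\le N$ of order $r$; thus $M_b$ can be computed entirely inside $N$. Consider the $q$-th power map $\psi\colon N\to N$, $\psi(a)=a^{q}$. For a generator $a$ of $N$ one has ${\rm ord}(a^{q})=r_1/(r_1,q)=r_1/q=r$, so $\psi$ carries each of the $\varphi(r_1)$ generators of $N$ to an element of order $r$, i.e.\ to a generator of $R$. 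Hence $M_b$ is exactly the fibre of $\psi$ over $b$ among the generators of $N$, and the $\varphi(r_1)$ generators of $N$ are partitioned into the fibres $\{M_b\}$ indexed by the $\varphi(r)$ generators $b$ of $R$.

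The key step is to show that all these fibres have the same cardinality. For this I would use that ${\rm Aut}(N)\cong(\mathbb{Z}/r_1\mathbb{Z})^{*}$ acts simply transitively on the generators of $N$, that $R$ is characteristic in $N$ so every $\phi\in{\rm Aut}(N)$ restricts to an automorphism of $R$, and that the restriction map ${\rm Aut}(N)\to{\rm Aut}(R)$ is surjective (it is the reduction $(\mathbb{Z}/r_1\mathbb{Z})^{*}\to(\mathbb{Z}/r\mathbb{Z})^{*}$, which is onto since $r\mid r_1$). Since every $\phi$ commutes with $\psi$, we have $\phi(M_b)=M_{\phi(b)}$; and as ${\rm Aut}(R)$ acts transitively on its generators, for any two order-$r$ elements $b,b'$ there is $\phi\in{\rm Aut}(N)$ with $\phi(b)=b'$, giving a bijection $M_b\to M_{b'}$. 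Therefore $\abs{M_b}$ is independent of $b$.

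Finally I would conclude by counting: the $\varphi(r_1)$ generators of $N$ split into $\varphi(r)$ fibres of equal size, so $\abs{M_b}=\varphi(r_1)/\varphi(r)$. The main obstacle is precisely the equal-cardinality statement of the second paragraph; the surjectivity of ${\rm Aut}(N)\to{\rm Aut}(R)$ together with the commutation $\phi\circ\psi=\psi\circ\phi$ are what make it work. As an alternative to this symmetry argument, one could count directly: writing $N=\langle h\rangle$, $R=\langle h^{q}\rangle$ and $b=h^{qe}$ with $(e,r)=1$, the solutions of $a^{q}=b$ in $N$ are $a=h^{e+jr}$ for $0\le j<q$ (using Lemma \ref{solution} to see there are exactly $q=(r_1,q)$ of them), and $M_b$ consists of those with $(e+jr,r_1)=1$; verifying that this count equals $\varphi(r_1)/\varphi(r)$ is then a routine but slightly fiddly elementary computation, which the automorphism argument neatly sidesteps.
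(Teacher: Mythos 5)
Your proof is correct, and it reaches the paper's conclusion by a genuinely different mechanism for the one nontrivial step. Both arguments share the same counting skeleton: since $r_1=rq$ gives $(r_1,q)=q$, the $q$-th power map sends each of the $\varphi(r_1)$ elements of order $r_1$ to an element of order $r_1/(r_1,q)=r$, partitioning them into the fibres $M_b$ indexed by the $\varphi(r)$ elements of order $r$, so everything reduces to showing $|M_b|$ is independent of $b$. The paper establishes this uniformity by an explicit discrete-logarithm computation: it fixes a generator $\zeta$ of $M$ with $\zeta^{m/r}=b$, writes each order-$r_1$ element uniquely as $\zeta^{ml_1/r_1}$ with $(l_1,r_1)=1$, and shows $a^q=b$ holds iff $r\mid l_1-1$, so that $|M_b|=|\{l_1:1\le l_1\le r_1,\,(l_1,r_1)=1,\,r\mid l_1-1\}|$, visibly independent of $b$. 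You instead pass to the subgroup $N$ of order $r_1$ and use that every $\phi\in{\rm Aut}(N)$ commutes with $\psi$ and that the restriction ${\rm Aut}(N)\to{\rm Aut}(R)$ is surjective, so the automorphism group permutes the fibres transitively; this makes the independence of $b$ conceptually transparent and avoids all congruence manipulation, at the cost of invoking the surjectivity of $(\mathbb{Z}/r_1\mathbb{Z})^{*}\to(\mathbb{Z}/r\mathbb{Z})^{*}$ --- a standard fact, but one that deserves a one-line CRT justification, since ``$r\mid r_1$'' by itself only makes the map well defined. Interestingly, the two proofs are closer than they look: the paper silently uses the analogous surjectivity of $(\mathbb{Z}/m\mathbb{Z})^{*}\to(\mathbb{Z}/r\mathbb{Z})^{*}$ when it asserts that a generator $\zeta$ with $\zeta^{m/r}=b$ exists. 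Your sketched ``alternative'' direct count over the $q$ solutions of $a^q=b$ is not what the paper does either --- the paper, like you, sidesteps that fiddly computation via the equal-fibre partition --- so you were right to set it aside.
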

\begin{proof}
Fix a generator $\zeta$ of $M$ such that $\zeta^{\frac{m}{r}}=b$.
It is easy to see that $\zeta^{\frac{m}{r_{1}}}\in M_{b}$. So $M_{b}$ is not empty.

Every element with order $r_{1}$ has a unique form $\zeta^{\frac{ml_{1}}{r_{1}}}, 1\le  l_{1}\le r_{1}, (l_{1},r_{1})=1$. Then we have
\begin{equation}
\zeta^{(\frac{ml_{1}}{r_{1}})q}=b \Leftrightarrow m|(\frac{ml_{1}}{r}-\frac{m}{r}) \Leftrightarrow r|l_{1}-1.
\notag
\end{equation}
So $|M_{b}|=|\big\{l_{1}|1\le l_{1}\le r_{1}, (l_{1},r_{1})=1,r|l_{1}-1\big\}|$, which implies that
$|M_{b}|$ only depends on $r$ and $r_{1}$ and it is independent of the specified value of $b$.
Hence, given another $b^{'}\in M, {\rm ord}(b^{'})=r$, we have $|M_{b^{'}}|=|M_{b}|$.

Since there are $\varphi(r_{1})$ elements with order $r_{1}$ and $\varphi(r)$ elements with order $r$,
$|M_{b}|=\frac{\varphi(r_{1})}{\varphi(r)}$.
\end{proof}

\begin{cor}\label{solution3}
For any two elements $a,b\in M$, ${\rm ord}(a)={\rm ord}(b), q\ge 1$,
then for each positive integer $r$ such that ${\rm ord}(a)|r$,
$M_{1}=\{x|x^{q}=a\}$ and $M_{2}=\{x|x^{q}=b\}$ have the same number of elements with order $r$.
\end{cor}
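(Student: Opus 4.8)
The plan is to exploit the fact that, in a cyclic group, any two elements of the same order are interchanged by an automorphism, and to transport $M_{1}$ onto $M_{2}$ by such an automorphism. Write $M=\langle\zeta\rangle$ with $|M|=m$, and recall that ${\rm Aut}(M)\cong(\mathbb{Z}/m\mathbb{Z})^{*}$, the unit $c$ acting by $\zeta\mapsto\zeta^{c}$. Since every automorphism is a group homomorphism, it preserves the order of each element, so it permutes the $\varphi(r)$ elements of order $r$ for every $r\mid m$ (and for $r\nmid m$ there are no elements of order $r$ at all, so the asserted equality is trivial).

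The key step is to produce an automorphism $\psi$ of $M$ with $\psi(a)=b$. Writing $a=\zeta^{i}$ and $b=\zeta^{j}$, the hypothesis ${\rm ord}(a)={\rm ord}(b)$ means $(i,m)=(j,m)=:g$. Setting $i=gi'$ and $j=gj'$ one has $(i',m/g)=(j',m/g)=1$, and I need a unit $c\in(\mathbb{Z}/m\mathbb{Z})^{*}$ with $ci\equiv j\pmod m$, that is $ci'\equiv j'\pmod{m/g}$. Such a $c$ exists because the reduction map $(\mathbb{Z}/m\mathbb{Z})^{*}\to(\mathbb{Z}/(m/g)\mathbb{Z})^{*}$ is surjective for any divisor $m/g$ of $m$; equivalently, the residue $j'(i')^{-1}$, which is prime to $m/g$, lifts to an integer prime to all of $m$. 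This lifting is the only point requiring any care, and it is a standard elementary fact. It is exactly the mechanism already underlying the independence of $|M_{b}|$ from the particular $b$ in Lemma~\ref{solution2}.

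With $\psi$ in hand the conclusion is immediate. If $x^{q}=a$ then $\psi(x)^{q}=\psi(x^{q})=\psi(a)=b$, so $\psi$ maps $M_{1}$ into $M_{2}$; applying the same argument to $\psi^{-1}$ shows that $\psi$ restricts to a bijection $M_{1}\to M_{2}$. Because $\psi$ preserves orders, it carries the order-$r$ elements of $M_{1}$ bijectively onto the order-$r$ elements of $M_{2}$ for every $r$ with ${\rm ord}(a)\mid r$, whence these two sets have the same cardinality, which is the assertion. Alternatively, one may rerun the parametrization in the proof of Lemma~\ref{solution2} for a general exponent $q$: an element of order $r$ has the form $\zeta^{(m/r)l}$ with $(l,r)=1$, the equation $x^{q}=a$ becomes a linear congruence in $l$ modulo $r$, and counting its solutions with $(l,r)=1$ yields a number depending only on $r$, $q$ and ${\rm ord}(a)$. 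Verifying that this count is independent of the chosen $a$ of given order, however, again reduces to the coprime-lifting fact above, so the automorphism argument is the cleaner route and I would present it as the main proof.
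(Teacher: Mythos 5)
Your proof is correct, but it takes a genuinely different route from the paper's. The paper argues by pure counting: it first invokes Lemma~\ref{solution} to see that $M_{1}$ and $M_{2}$ have the same total cardinality, and then invokes Lemma~\ref{solution2}, whose generator-based parametrization shows that the number of elements of a given order in the fiber $\{x\mid x^{q}=b\}$ depends only on ${\rm ord}(b)$ (together with $q$ and the order in question), not on $b$ itself. You instead transport one fiber onto the other: you construct $\psi\in{\rm Aut}(M)$, $x\mapsto x^{c}$, with $\psi(a)=b$ --- using that any two elements of equal order in a cyclic group are equivalent under ${\rm Aut}(M)$, which you correctly reduce to the surjectivity of the reduction map $(\mathbb{Z}/m\mathbb{Z})^{*}\to(\mathbb{Z}/(m/g)\mathbb{Z})^{*}$, a standard lifting fact and indeed the only delicate point --- and then observe that $\psi$ restricts to an order-preserving bijection $M_{1}\to M_{2}$, since automorphisms commute with $x\mapsto x^{q}$ and preserve orders. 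Your argument is cleaner and more self-contained: it needs neither Lemma~\ref{solution} nor Lemma~\ref{solution2}, and it handles every order $r$ with ${\rm ord}(a)\mid r$ uniformly, whereas Lemma~\ref{solution2} as stated only treats orders of the exact form $r_{1}={\rm ord}(b)\,q$, so the paper's one-line ``apply Lemma~\ref{solution2}'' tacitly requires rerunning that lemma's congruence computation for general $r$ (which is precisely the alternative you sketch at the end, and which, as you note, again reduces to the same coprime-lifting fact). What the paper's route buys in exchange is quantitative information --- the explicit fiber count $\varphi(r_{1})/\varphi(r)$ --- which the automorphism argument does not produce; but since the corollary asserts only equality of counts, your proof fully establishes the statement, and your observation that the choice of generator $\zeta$ with $\zeta^{m/r}=b$ in Lemma~\ref{solution2} is the same automorphism mechanism in disguise is accurate.
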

\begin{proof}
By Lemma \ref{solution}, $M_{1}$ and $M_{2}$ have the same number of elements.
Then we can get the desired result by applying Lemma \ref{solution2}.
\end{proof}

\begin{thm}\label{iso2}
For any $a,b\in G(n,k)$, if ${\rm ord}(a)={\rm ord}(b)$, then $a\cong b$.
\end{thm}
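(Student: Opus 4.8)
The plan is to prove a slightly stronger statement than the one asserted: whenever ${\rm ord}(a)={\rm ord}(b)$ there is an automorphism $g\in{\rm Aut}(n,k)$ with $g(a)=b$ that is moreover \emph{order-preserving}, meaning ${\rm ord}(g(x))={\rm ord}(x)$ for every vertex $x$. Strengthening the claim this way is essential, since a graph automorphism of $G(n,k)$ need not respect the group order (indeed, as the paper already notes, vertices of different orders may occur at the same height and carry isomorphic pendant trees). By Corollary \ref{height2} the two vertices $a$ and $b$ automatically lie at a common height $h$, and I would induct on $h$.

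For the base case $h=0$ the vertices $a,b$ are cycle vertices of a common order $d\mid t$. By Proposition \ref{same order} and Proposition \ref{cycle}$(1)$ they lie on cycles of the same length $\ell(d)={\rm ord}_{d}k$, so the components containing them are isomorphic. Using the wreath-product description of ${\rm Aut}(n,k)$ recorded above (the factors $S_{m_i}$ permuting isomorphic components and each $\langle\sigma_i\rangle$ rotating the corresponding cycle), together with the identifications of trees in Theorem \ref{iso1}, I would exhibit an automorphism carrying the cycle of $a$ onto the cycle of $b$ and then rotating it so that $a\mapsto b$. Since every cycle vertex met here has order $d$ and the maps of Theorem \ref{iso1} multiply by cycle vertices of equal order, this automorphism is order-preserving.

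The engine of the inductive step is the following pendant-subtree lemma, which I would establish separately: for any two vertices $u,v$ with ${\rm ord}(u)={\rm ord}(v)$, the rooted subtrees hanging below $u$ and below $v$ (each vertex together with its iterated $f$-preimages) are isomorphic via an order-preserving isomorphism sending $u\mapsto v$. This I would prove by downward induction on height. Whether a vertex is a leaf depends only on its order, since by Lemma \ref{basic}$(2)$ having nonzero indegree means ${\rm ord}\mid n/d$; hence $u$ and $v$ are simultaneously leaves or not. If not, their children are the solutions of $x^{k}=u$ and $x^{k}=v$, and Corollary \ref{solution3} (with $q=k$) guarantees that for every $r$ the two parents have equally many children of order $r$. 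Matching children of equal order and invoking the induction hypothesis on each matched pair produces the desired order-preserving subtree isomorphism.

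For the inductive step with $h\ge 1$, note that ${\rm ord}(a^{k})={\rm ord}(a)/({\rm ord}(a),k)={\rm ord}(b^{k})$ and that $a^{k},b^{k}$ have height $h-1$; by the induction hypothesis choose an order-preserving $g_{0}$ with $g_{0}(a^{k})=b^{k}$. As a graph automorphism $g_{0}$ maps the children of $a^{k}$ bijectively onto the children of $b^{k}$, so $g_{0}(a)$ is a child of $b^{k}$, and order-preservation forces ${\rm ord}(g_{0}(a))={\rm ord}(a)={\rm ord}(b)$. Thus $g_{0}(a)$ and $b$ are two children of the common parent $b^{k}$ with equal order, so by the subtree lemma their pendant subtrees admit an order-preserving isomorphism; I would let $\psi$ be the automorphism of $G(n,k)$ that swaps these two subtrees along that isomorphism and fixes all other vertices. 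Then $\psi g_{0}$ is order-preserving and sends $a$ to $b$, closing the induction. The main obstacle throughout is precisely the need to keep everything order-preserving: a naive induction only yields \emph{some} automorphism taking $a^{k}$ to $b^{k}$, and without controlling orders one cannot guarantee that $g_{0}(a)$ lands on a child of $b^{k}$ whose subtree matches that of $b$. Corollary \ref{solution3} is exactly the tool that converts equality of orders into the matching of children required to build $\psi$.
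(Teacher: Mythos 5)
Your proof is correct and takes essentially the same route as the paper's: Corollary \ref{height2} places $a$ and $b$ at a common height, equality of orders of all powers matches the terminal cycles, and Corollary \ref{solution3} drives the recursive matching of preimages by order. The paper compresses this into three lines, and your order-preserving strengthening, pendant-subtree lemma, and subtree-swap induction are precisely the details its sketch leaves implicit.
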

\begin{proof}
By Corollary \ref{height2}, $a$ and $b$ are at the same height. Since for any positive integer $h$,
${\rm ord}(a^{h})={\rm ord}(b^{h})$, then the cycles which they lead to have the same order.
Then the desired result follows from Corollary \ref{solution3}.
\end{proof}

From now on we assume that $k$ is a prime.

For any $a\in T_{1}$, there exists a $h$ such that $a^{k^{h}}=1$, which implies that ${\rm ord}(a)|k^{h}$.
So $w_{a}={\rm ord}(a)$. By Corollary \ref{prime}, we get the following proposition.
\begin{prop}\label{T1}
If $k$ is a prime, for any $a,b\in T_{1}$, $a$ and $b$ are at the same height if and only if ${\rm ord}(a)={\rm ord}(b)$.
\end{prop}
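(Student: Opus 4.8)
The plan is to deduce the statement directly from Corollary \ref{prime} once one notices that the quantity $w_{a}$ collapses to ${\rm ord}(a)$ for every vertex of $T_{1}$. First I would record why this collapse happens: any $a\in T_{1}$ satisfies $a^{k^{h}}=1$ for some $h$, so ${\rm ord}(a)\mid k^{h}$, and since $k$ is prime this forces ${\rm ord}(a)$ to be a power of $k$. Consequently the largest divisor of $n_{a}={\rm ord}(a)$ that is coprime to $k$ is $t_{a}=1$, whence $w_{a}={\rm ord}(a)$. This is exactly the remark made in the paragraph immediately preceding the statement, so no new computation is required.

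With this identity in hand the proof is immediate. Corollary \ref{prime} asserts that, for prime $k$, two vertices are at the same height if and only if $w_{a}=w_{b}$. Substituting $w_{a}={\rm ord}(a)$ and $w_{b}={\rm ord}(b)$ yields the chain of equivalences that $a$ and $b$ lie at the same height $\iff w_{a}=w_{b}\iff {\rm ord}(a)={\rm ord}(b)$, which is precisely the asserted biconditional. I would present the argument in exactly this order: the $w_{a}={\rm ord}(a)$ reduction first, then a single appeal to Corollary \ref{prime}.

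There is no genuine obstacle here, since both ingredients are already established; the real work was front-loaded into Proposition \ref{height1} and Corollary \ref{prime}. The one point worth flagging is that primality of $k$ is used in two distinct places: to guarantee $w_{a}={\rm ord}(a)$ on $T_{1}$ (a prime power of a single prime has trivial coprime part) and inside Corollary \ref{prime} itself (where the proof relies on $w_{a}=k^{h_{a}}$). Were $k$ merely a prime power the first identity would still survive, but the height would no longer determine the order, so the hypothesis that $k$ be prime cannot be weakened in this straightforward manner.
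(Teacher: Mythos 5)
Your proof is correct and takes essentially the same route as the paper: the paper's own justification is exactly the observation, made in the paragraph preceding the proposition, that any $a\in T_{1}$ satisfies $a^{k^{h}}=1$ for some $h$, hence ${\rm ord}(a)\mid k^{h}$ and $w_{a}={\rm ord}(a)$, followed by a single appeal to Corollary \ref{prime}. Your closing remark on the two distinct uses of primality (and why a prime power $k$ would break the height-determines-order direction) is accurate but goes beyond what the paper records.
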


Hence, all the vertices with indegree $0$ of $T_{1}$ are at the largest height $h_{0}$.

Since $k$ is a prime, $(n,k)=1$ or $k$. We have discussed Aut$(n,k)$ on the case $(n,k)=1$, see Proposition \ref{auto1}.

As follows we suppose that $(n,k)=k$. Then the largest height $h_{0}\ge 1$.
For $1\le h\le h_{0}$, let $T_{1h}$ be the tree originating from a vertex with height $h$ in $T_{1}$.
In particular, the vertex set of $T_{1h_{0}}$ contains only one point.
Proposition \ref{T1} and Theorem \ref{iso2} tell us that $T_{1h}$ is well-defined.
Then we get the following proposition.
\begin{prop}
If $k$ is a prime and $(n,k)=k$, then we have
${\rm Aut}(T_{1})\cong {\rm Aut}(T_{11})\wr S_{k-1}$, for any $1\le h< h_{0}$, ${\rm Aut}(T_{1h})={\rm Aut}(T_{1,h+1})\wr S_{k}$,
and ${\rm Aut}(T_{1h_{0}})=\{1\}$.
\end{prop}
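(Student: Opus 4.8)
The plan is to read off all three isomorphisms directly from the recursive child structure of $T_1$, which under the hypotheses $k$ prime and $(n,k)=k$ becomes completely uniform. First I would record the branching data. Write $n=k^{h_0}m$ with $(m,k)=1$; since $k$ is prime, $w=k^{h_0}$, consistent with Proposition \ref{height1}. By Proposition \ref{T1} a vertex $a\in T_1$ lies at height $h$ exactly when $\text{ord}(a)=k^h$. By Lemma \ref{basic} $(2)$ such a vertex has indegree $0$ or $k$, and it has positive indegree precisely when $a^{n/k}=1$, i.e. when $\text{ord}(a)=k^h\mid n/k=k^{h_0-1}m$, which (as $(k,m)=1$) holds exactly for $h\le h_0-1$. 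Moreover each of the $k$ preimages $x$ of such an $a$ (those with $x^k=a$) satisfies $x^{k^{h+1}}=a^{k^h}=1$, so $x\in T_1$ at height $h+1$. Hence in $T_1$ every vertex of height $h$ with $1\le h\le h_0-1$ has exactly $k$ children, every vertex of height $h_0$ is a leaf, and the root $1$ — being the unique cycle vertex, whose one loop is deleted when forming the tree — has exactly $k-1$ children.

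Next I would fix the root of each tree under consideration. Since every edge of $T_{1h}$ points from a vertex toward its parent, the root is the unique vertex of out-degree $0$ inside $T_{1h}$, so every automorphism of $T_{1h}$ must fix it and permute the branches hanging off it. For $1\le h<h_0$ the root of $T_{1h}$ carries exactly $k$ children (by the count above), each of which is the root of a copy of $T_{1,h+1}$; these copies are pairwise isomorphic because all the children share the order $k^{h+1}$, so Theorem \ref{iso2} together with Proposition \ref{T1} shows $T_{1,h+1}$ is well-defined and independent of the chosen child.

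The isomorphisms then follow from the standard description of the automorphism group of a rooted tree whose root has $k$ mutually isomorphic branches: removing the (necessarily fixed) root leaves a disjoint union of $k$ isomorphic copies of $T_{1,h+1}$, and by \cite{C} the automorphisms of such a forest form $\text{Aut}(T_{1,h+1})\wr S_k$. Applying this to a height-$h$ vertex for $1\le h<h_0$ yields $\text{Aut}(T_{1h})\cong\text{Aut}(T_{1,h+1})\wr S_k$; applying it to the root $1$, which has $k-1$ rather than $k$ branches, yields $\text{Aut}(T_1)\cong\text{Aut}(T_{11})\wr S_{k-1}$; and $T_{1h_0}$ consists of a single vertex, so $\text{Aut}(T_{1h_0})=\{1\}$.

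The main obstacle is the child-count step: one must verify that the indegree is genuinely $k$ at every internal level and drops to $0$ exactly at level $h_0$, and that the discrepancy of $k-1$ versus $k$ at the very top arises solely from deleting the loop at the cycle vertex $1$. Once this uniform branching is in hand, the wreath-product decomposition is the routine structural fact for rooted trees, needing only the already-established statement (Theorem \ref{iso2} and Proposition \ref{T1}) that equal-order children generate isomorphic subtrees.
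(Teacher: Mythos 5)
Your proof is correct and follows exactly the route the paper intends: the paper states this proposition without a written proof, relying on the preceding remarks that Proposition \ref{T1} and Theorem \ref{iso2} make $T_{1h}$ well-defined and that $T_{1h_{0}}$ is a single vertex, together with the (implicit) uniform branching of $T_{1}$. The details you supply --- the indegree count of $k$ at every height below $h_{0}$ via Lemma \ref{basic}(2), the $k-1$ children of the root arising from the deleted loop at $1$, the root-fixing argument, and the wreath-product decomposition from \cite{C} --- are precisely the steps the paper leaves unsaid, and they are all sound.
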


\section{Further Problems}
We mention three further problems which may worth studying.

First, it may be interesting to consider other graphic problems for $G(n,k)$,
such as the matching problem and the coloring problem.

Second, it may be interesting to study the asymptotic mean numbers of cycle vertices and cycles.
\cite{CS}, \cite{SH} and \cite{VS} will be helpful.

Third, what will happen if $H$ is not cyclic?
\cite{BHM}, \cite{SK1}, \cite{SK2}, \cite{SK3} and \cite{W}
will be helpful.

\section{Acknowledgment}
We would like to thank Dr. Jingfen Lan for her valuable suggestions.
We also thank the referee for the careful review and the valuable comments.

%% The Appendices part is started with the command \appendix;
%% appendix sections are then done as normal sections
%% \appendix

%% \section{}
%% \label{}

%% References
%%
%% Following citation commands can be used in the body text:
%% Usage of \cite is as follows:
%%   \cite{key}         ==>>  [#]
%%   \cite[chap. 2]{key} ==>> [#, chap. 2]
%%

%% References with bibTeX database:

%\bibliographystyle{elsarticle-num}
%\bibliography{elsarticle-num.bst}

%% Authors are advised to submit their bibtex database files. They are
%% requested to list a bibtex style file in the manuscript if they do
%% not want to use elsarticle-num.bst.

%% References without bibTeX database:

\end{document}